\newcounter{resultnum}[section]\setcounter{resultnum}{0}
\newtheorem{conclusion}{Conclusion}[section]
\newcounter{conclusionnum}[section]\setcounter{conclusionnum}{0}
\newcounter{conditionnum}[section]\setcounter{conditionnum}{0}
\newcounter{conjecturenum}[section]\setcounter{conjecturenum}{0}
\newcounter{examplenum}[section]\setcounter{examplenum}{0}
\newcounter{exercisenum}[section]\setcounter{exercisenum}{0}
\newtheorem{lemma}{Lemma}[section]
\newcounter{lemmanum}[section]\setcounter{lemmanum}{0}
\newcounter{notationnum}[section]\setcounter{notationnum}{0}
\newtheorem{theorem}{Theorem}[section]
\newcounter{theoremnum}[section]\setcounter{theoremnum}{0}
\newtheorem{definition}{Definition}[section]
\newcounter{definitionnum}[section]\setcounter{definitionnum}{0}
\newtheorem{corollary}{Corollary}[section]
\newcounter{corollarynum}[section]\setcounter{corollarynum}{0}
\newcounter{remarknum}[section]\setcounter{remarknum}{0}
\newtheorem{proposition}{Proposition}[section]
\newcounter{propositionnum}[section]\setcounter{propositionnum}{0}
\newcounter{acknowledgementnum}[section]\setcounter{acknowledgementnum}{0}
\newcounter{algorithmnum}[section]\setcounter{algorithmnum}{0}
\newcounter{axiomnum}[section]\setcounter{axiomnum}{0}
\newcounter{casenum}[section]\setcounter{casenum}{0}
\newtheorem{claim}{Claim}[section]
\newcounter{claimnum}[section]\setcounter{claimnum}{0}
\newcounter{summarynum}[section]\setcounter{summarynum}{0}
\newcounter{problemnum}[section]\setcounter{problemnum}{0}
\newenvironment{proof}[1][]{\textbf{Proof.} }{}
\begin{document}

\title{Fractional Nonholonomic Ricci Flows}
\date{April 5, 2010}
\author{\textbf{Sergiu I. Vacaru} \thanks{%
sergiu.vacaru@uaic.ro, Sergiu.Vacaru@gmail.com} \\
%EndAName
\textsl{\small University "Al. I. Cuza" Ia\c si, Science Department,} \\
\textsl{\small 54 Lascar Catargi street, Ia\c si, Romania, 700107 }}
\maketitle

\begin{abstract}
We formulate the fractional Ricci flow theory for (pseudo) Riemannian geometries enabled with nonholonomic distributions defining fractional integro--differential structures, for non--integer dimensions. There are constructed fractional analogs of Perelman's functionals and derived the
corresponding fractional evolution (Hamilton's) equations. We apply in fractional calculus the nonlinear connection formalism originally elaborated in Finsler geometry and generalizations and recently applied to classical and quantum gravity
theories. There are also analyzed the fractional operators for the entropy and fundamental thermodynamic values.

\vskip0.3cm

\textbf{Keywords:}\ fractional Ricci flows, nonholonomic manifolds,
nonlinear connections.

\vskip3pt 2000 MSC:\ 26A33, 53C44, 53C99, 83E99

PACS:\ 45.10Hj, 05.30.Pr, 02.90.+p, 04.90.+e, 02.30Xx
\end{abstract}

\tableofcontents

\section{Introduction}

The purpose of this paper is to generalize the Ricci flow theory \cite%
{ham1,gper1,gper2,gper3} (see \cite{caozhu,kleiner,rbook} for reviews of results and methods) to  fractional evolution of  geometries of non--integer dimensions. The most important
achievement of this theory was the proof of W. Thurston's Geometrization
Conjecture by Grisha Perelman \cite{gper1,gper2,gper3}. The main results on
Ricci flow evolution were proved, in the bulk, for (pseudo) Riemannian and K%
\"{a}hler geometries. We show that similar results follow for geometries
with non--integer dimensions when a fractional differential and integral
calculus is corresponding encoded into nonholonomic frame structures and
adapted geometric objects.

\subsection{Basic concepts and ideas}

In a series of works (see, for instance, \cite{vnhrf1,vnhrf2,vnhrf3} and
references therein), we proved that nonholonomic constraints on Ricci flow
evolution may transform (pseudo) Riemannian metrics and Levi--Civita
connections into (in general) nonsymmetric metrics and (for instance)
Lagrange--Finsler type linear connections\footnote{%
see reviews \cite{ijgmmp,vrflg} and monograph \cite{vsgg}, and references
therein, on modern developments and applications in modern physics of the
geometry of nonholonomic manifolds and Lagrange--Finsler spaces \cite%
{ma,bejf}}. Such geometries can be modeled by nonholonomic distributions and
frames with associated nonlinear connection (N--connection) structures on
(pseudo) Riemannian spaces and/or generalizations.\footnote{%
There are used also equivalent terms like anholonomic and non--integrable
manifolds. A nonholonomic manifold is defined by a pair $(\mathbf{V},\mathcal{N%
})$, where $\mathbf{V}$ is a manifold and $\mathcal{N}$ is a nonintegrable
distribution on $\mathbf{V.}$}

If nonholonomic distributions on a manifold $M$ contain corresponding
integro--differential relations, we can model fractional geometries (for
spaces with derivatives and integrals of non--integer order). The first
example of derivative of order $\alpha =1/2$ has been described by Leibnitz
in 1695, see historical remarks in \cite{ross}. The theory of fractional
calculus with derivatives and integrals of non--integer order goes back to
Leibniz, Louville, Grunwald, Letnikov and Riemann \cite%
{oldham,podlubny,kilbas,nishimoto}. Derivatives and integrals of fractional
order, and fractional integro--differential equations, have found many
applications in physics (for example, see monographs \cite{hilfer,west} and
papers \cite{zaslavsky,taras1,taras2,baleanu}).

We consider that the question if analogous of Thurston (in particular,
Poincar\'{e}) Conjecture can be formulated (and may be proven ?) for some
spaces with fractional dimension is of fundamental importance in modern
mathematics and physics. As a first step, the goal of this paper is to
formulate a fractional version of the Hamilton--Perelman theory of Ricci
flows. On Perelman's functionals, we shall follow the methods elaborated in
Sections 1-5 of Ref. \cite{gper1} but generalized for fractional
nonholonomic manifolds by developing certain constructions from our works on
nonholonomic Ricci flow evolution \cite{vnhrf1,vnhrf2}.

We define a fractional nonholonomic manifold (equivalently, space) $\overset{%
\alpha }{\mathbf{V}}$ to be given by a quadruple $(\mathbf{V},\overset{%
\alpha }{\mathcal{N}},\overset{\alpha }{\mathbf{d}},\overset{\alpha }{%
\mathbf{I}}),$ with a fractional real number $\alpha ,$\ where $\mathbf{V}$
is a "prime" manifold of integer dimension and necessary smooth class and $%
\overset{\alpha }{\mathcal{N}}$ is a nonintegrable distribution
correspondingly adapted with a fractional differential calculus $\overset{%
\alpha }{\mathbf{d}}$ and fractional calculus $\overset{\alpha }{\mathbf{I}}$
on $\mathbf{V.}$ For such fractional nonholonomic geometries (including as
particular cases, for instance, (pseudo\footnote{%
mathematicians use the term ''semi''}) Riemannian and Finsler geometries),
we shall develop the approach to encode a corresponding fractional
integro--differential calculus adapted to nonholonomic distributions.
Perhaps, the simplest way is to follow locally a fractional vector calculus
with ''combined'' Riemann--Liouville and Caputo derivatives as in \cite%
{taras1} resulting in a self--consistent fractional generalization of
integral operations (and corresponding fractional Gauss's, Stokes', Green's
etc integral theorems which, in our approach, are crucially important for
constructing fractional Perelman's functionals).\footnote{%
There were formulated various approaches to fractional differential and
integral calculus; on nonholonomic manifolds, some of them can be related
via nonholonomic transforms/deformations of geometric structures. In this
paper, we elaborate a formalism with Caputo fractional derivative (which
gives zero acting on constants) adapted to nonlinear connections preserving
a number of similarities with an unified covariant calculus for (pseudo)
Riemannian manifolds and Lagrange--Finsler geometries \cite%
{ijgmmp,vrflg,vsgg}.}

The article is organized as follows:\ In section \ref{frnhm}, we provide a
brief introduction into the geometry of fractional nonholonomic manifolds.\
Grisha Perelman's functional approach to Ricci flow theory is generalized
for fractional nonholonomic manifolds in section \ref{ssfgpf}. We derive the
fractional nonholonomic evolution equations in section \ref{sfhe}. A
statistical interpretation of frac\-tional nonholonomic spaces and Ricci
flows is proposed. Formulas for fractional differential and integral
calculus are summarized in Appendix.

\subsection{Remarks on notations and proofs}

\label{ss12}

\begin{enumerate}
\item We shall elaborate for fractional nonholonomic spaces a system of
notations  unifying that for the nonholonomic manifolds
and bundles \cite{vrflg,vnhrf1,vnhrf2} and fractional integro--differential
calculus \cite{oldham,podlubny,kilbas,nishimoto,taras1} (we consider the
reader to be familiar with the results of such works). For geometric objects
spaces with nonholonomic distributions, we shall use boldface symbols like $%
\mathbf{V,N}$ etc and put an up label $\alpha $ for fractional
generalizations, for instance, of operators $\overset{\alpha }{\mathbf{d}},%
\overset{\alpha }{\mathbf{I}}.$ We do not use the symbol $D$ for fractional
partial derivatives as usually in works on fractional calculus (but we shall
write $\overset{\alpha }{\partial },$ or $\overset{\alpha }{d}$) because on
curved spaces of integer dimension  the symbols $D$ and $\mathbf{D}$ are
used for covariant derivatives. It is convenient to keep the same notations
of covariant derivatives  for fractional
curved spaces rewriting them, respectively, as $\overset{\alpha }{D},$ or $%
\overset{\alpha }{\mathbf{D}}.$ We shall put a ''fractional'' label $\alpha $
on the left, like $\ ^{\alpha }A,$ if that will result in a more compact
system of notations.

\item There will be also used ''up'' and ''low'' labels for some canonical
geometric objects/operators, for instance, for the horizontal (h) and
vertical (v) of a distinguished linear connection $\mathbf{D=(\ \ }_{h}D,%
\mathbf{\ \ }_{v}D\mathbf{)}$ and/or its fractional generalization $\overset{%
\alpha }{\mathbf{D}}\mathbf{=(\ \ }_{h}\overset{\alpha }{D},\mathbf{\ \ }%
\overset{\alpha }{_{v}D}\mathbf{).}$ Splitting of space dimension $\dim
\mathbf{V=}n+m$ is considered for a nonholonomic distribution (defining a
nonlinear connection, N--connection, structure) $\mathbf{N}:T\mathbf{V=}h%
\mathbf{V\oplus }v\mathbf{V,}$ where $\mathbf{\oplus }$ is the Whitney sum, $%
\dim (h\mathbf{V)=}2n$ and $\dim (v\mathbf{V)=}2m.$ In some important
particular cases, we can consider $\mathbf{V}$ to be a (pseudo) Riemannian
manifold enabled with a nonholonomic distribution $\mathcal{N}$ induced by $%
\mathbf{N,}$ or $V=E/TM$ for a vector/tangent bundle $\left( E,\pi ,M\right)
$ / $\left( TM,\pi ,M\right) $ on a manifold $M,$ $\dim M=n,$ $\dim E=n+m$ /
$\dim TM=2n,$ with $\pi $ being a corresponding surjective projection.
Indices of local coordinates on a point $\mathbf{u}$ on a open chart $%
\mathbf{U}$\textbf{\ }for an atlas $\{U\}$ covering $\mathbf{V}$ are split
in the form $\mathbf{u}^{\alpha }=(x^{i},y^{a}),$ (or in brief $u=(x,y)$),
where the general Greek indices $\alpha ,\beta ,\gamma ,...$ split
correspondingly into $h$--indices $i,j,k,...=1,2,...,n$ and $v$--indices $%
a,b,c,...=n+1,n+2,...,m.$ There are possible various types of transforms of
local frames and coframes,$\ \mathbf{e}_{\beta }=(e_{j},e_{b})$\ and $%
\mathbf{e}^{\beta }=(e^{j},e^{b})$ \ (in particular, coordinates with
''primed'', ''underlined'' indices etc), when, for instance, $\mathbf{e}%
_{\beta ^{\prime }}=e_{\ \beta ^{\prime }}^{\beta }(u)\mathbf{e}_{\beta }$
or $\mathbf{e}_{\beta }=e_{\ \beta }^{\underline{\beta }}(u)\mathbf{e}_{%
\underline{\beta }},\ \mathbf{e}_{\underline{\beta }}=\partial _{\underline{%
\beta }}=(\partial _{\underline{j}}=\partial /\partial x^{\underline{j}%
},\partial _{\underline{a}}=\partial /\partial y^{\underline{a}})$ for the
Einstein's summation rule on indices being accepted. \ Geometric objects on $%
\mathbf{V},$ for instance, tensors, connections etc can be adapted to a
N--connection structure and defined by symbols with coefficients on the
right, running corresponding values with respect to N--adapted bases
(preserving a chosen $h$--$v$--decomposition), for instance, $\mathbf{R}=\{%
\mathbf{R}_{\ \gamma \tau \mu }^{\beta }=\{R_{\ klm}^{j},R_{\ klm}^{b},R_{\
klc}^{b},...\}\}.$

\item Generalizing correspondingly a fractional integro--differential
calculus from \cite{taras1} to nonholonomic manifolds, we can elaborate a
formal (abstract) analogy with the ''integer'' case but with certain
modifications of the rules of local differentiation and ''mixed''
nonholonomic integral--differential rules. The ''fractional'' spaces and
geometric objects will be enabled with an up label $\alpha $ in the form $%
\overset{\alpha }{\mathbf{V}},\overset{\alpha }{\mathbf{D}},\overset{\alpha }%
{\mathbf{e}}^{\ \beta },\overset{\alpha }{\mathbf{R}}_{\ \gamma \tau \mu
}^{\ \beta }$ etc.

\item Following the abstract fractional N--adapted calculus, the proofs of
theorems became very similar to those given in N--adapted form \cite%
{vrflg,vnhrf1,vnhrf2}, which was used for a nonholonomic generalization of
the Ricci flow theory \cite{ham1,gper1,caozhu,kleiner,rbook}. In this paper,
we sketch proofs using the above mentioned formal analogy between N--adapted
fractional and integer geometric constructions. Proofs of the fractional
integro--differential theorems related to Ricci flow evolution became very
sophisticate if we do apply the formalism of nonholonomic distributions, do
not introduce nonlinear connections and do not apply certain methods from
the geometry of nonholonomic manifolds.
\end{enumerate}

\section{Nonholonomic Manifolds with Fractional Distributions}

\label{frnhm}

The fractional differential calculus for flat spaces elaborated in \cite%
{taras1,taras2}, and outlined in Appendix \ref{asubs1}, is extended for
nonholonomic manifolds. For simplicity, such fractional manifolds can be
modelled as real (pseudo) Riemannian spaces enabled with nonholonomic
distributions containing such integro--differential relations when the
fractional calculus on curved spaces with nonintegrable constraints is
elaborated in a form maximally similar to integer dimensions.

\subsection{Fractional (co) tangent bundles}

For the integer differential calculus, the tangent bundle $TM$ $\ $over a
manifold can be constructed for a given  local  differential structure with standard partial
derivatives $%
\partial _{i}$. Such an approach can be generalized to a fractional case
when instead of $\partial _{i}$ the differential structure is substituted,
for instance, by the left Caputo derivatives $\ _{\ _{1}x^{i}}\overset{%
\alpha }{\underline{\partial }}_{i}$ of type (\ref{lfcd}) for every local
coordinate $x^{i}$ on a local cart $X$ on $M.$ 

Let us review, in brief, the
definition for fractional tangent bundle $\overset{\alpha }{\underline{T}}M$
\ for $\alpha \in (0,1)$ (the symbol $T$ is underlined in order to emphasize
that we shall associate the approach to a fractional Caputo derivative).
Here we cite the paper \cite{albu} for some similar constructions with
fractional tangent spaces but for the left fractional RL derivative. We do
not follow that approach because it is not suitable for elaborating
fractional Ricci flow and gravitational models with exactly integrable
evolution and, respectively, field equations, and a self--consistent
fractional integral calculus with "simplified" integral theorems. For our
purposes, it is more convenient to use the fractional calculus formalism
proposed in Ref. \cite{taras1}.

We have a fractional Caputo left contact $\alpha $ in a point $\ _{0}x\in X$
\ for the parametrized curves on $M$ parametrized by a real parameter $\tau $
and $\ _{1}c,\ _{2}c:I\rightarrow M,$ with $0\in I;\ _{1}c(0)=\ _{2}c(0)\in
M $ if $\ _{\ _{1}x^{i}}\overset{\alpha }{\underline{\partial }}_{i}(f\circ
\ _{1}c)_{\mid \tau =0}=\ _{\ _{1}x^{i}}\overset{\alpha }{\underline{%
\partial }}_{i}(f\circ \ _{2}c)_{\mid \tau =0}$ holds for all analytic
functions $f$ on $X.$ This defines a relation of equivalence when the
classes $[\overset{\alpha }{\underline{c}}]_{\ _{0}x}$ determines the
fractional left tangent Caputo space $\overset{\alpha }{\underline{T}}_{\
_{0}x}M.$ The corresponding fractional tangent bundle is $\overset{\alpha }{%
\underline{T}}M:=\bigcup\limits_{_{0}x}$ $\overset{\alpha }{\underline{T}}%
_{\ _{0}x}M$ $\ $when the surjective projection $\ \overset{\alpha }{\pi }:%
\overset{\alpha }{\underline{T}}M\rightarrow M$ \ acts as $\overset{\alpha }{%
\pi }[\overset{\alpha }{\underline{c}}]_{\ _{0}x}=\ _{0}x.$ Such a
fractional bundle space is given by a triple $\left( \overset{\alpha }{%
\underline{T}}M,\overset{\alpha }{\pi },M\right) .$ For simplicity, we shall
write, in brief, for the total space only the symbol  $\overset{\alpha }{\underline{T}}M $ \
if that will not result in ambiguities.

Locally on $M,$ the class $[\overset{\alpha }{\underline{c}}]_{\ _{0}x}$ is
characterized by a curve
\begin{equation*}
x^{i}(\tau )=x^{i}(0)+\frac{\tau ^{\alpha }}{\Gamma (1+\alpha )}_{\ _{1}c}%
\overset{\alpha }{\underline{\partial }}_{\tau }x_{\mid \tau =0}^{i},
\end{equation*}%
for $\tau \in (-\varepsilon ,\varepsilon ).$ So, the horizontal and vertical
coordinates (respectively, h-- and v--coordinates)\ \ on $\overset{\alpha }{%
\pi }^{-1}(X)\subset \overset{\alpha }{\underline{T}}_{\ _{0}x}M$ are $\
\overset{\alpha }{u}^{\beta }=(x^{j},\overset{\alpha }{y}^{j}),$ where
\begin{equation*}
x^{i}=x^{i}(0)\mbox{\ and \ }\overset{\alpha }{y}^{j}=\frac{1}{\Gamma
(1+\alpha )}_{\ _{1}c}\overset{\alpha }{\underline{\partial }}_{\tau
}x^{i}(\tau )_{\mid \tau =0}.
\end{equation*}%
For simplicity, we shall write instead of$\ \overset{\alpha }{u}^{\beta
}=(x^{j},\overset{\alpha }{y}^{j}),$ the local coordinates $u^{\beta
}=(x^{j},y^{j})$ both for integer and fractional tangent bundles considering
that there were chosen certain such parametrizations of local coordinate
systems by using classes of equivalence only for the left Caputo fractional
derivatives.

On $\overset{\alpha }{\underline{T}}M,$ we can consider an arbitrary
fractional left (Caputo) frame basis
\begin{equation}
\overset{\alpha }{\underline{e}}_{\beta }=e_{\ \beta }^{\beta ^{\prime
}}(u^{\beta })\overset{\alpha }{\underline{\partial }}_{\beta ^{\prime }}
\label{flcfb}
\end{equation}%
where the fractional local coordinate basis
\begin{equation}
\overset{\alpha }{\underline{\partial }}_{\beta ^{\prime }}=\left( \overset{%
\alpha }{\underline{\partial }}_{j^{\prime }}=_{\ _{1}x^{^{\prime }}}\overset%
{\alpha }{\underline{\partial }}_{j},\overset{\alpha }{\underline{\partial }}%
_{b^{\prime }}=_{\ _{1}y^{b^{\prime }}}\overset{\alpha }{\underline{\partial
}}_{b^{\prime }}\right)  \label{frlcb}
\end{equation}%
is with running of indices of type $j^{\prime }=1,2,...,n$ and $b^{\prime
}=n+1,n+2,...,n+n.$ We might introduce arbitrary fractional co--bases which
are dual to (\ref{flcfb}),
\begin{equation}
\overset{\alpha }{\underline{e}}^{\ \beta }=e_{\beta ^{\prime }\ }^{\ \beta
}(u^{\beta })\overset{\alpha }{d}u^{\beta ^{\prime }},  \label{flcfdb}
\end{equation}%
where the fractional local coordinate co--basis
\begin{equation}
\ _{\ }\overset{\alpha }{d}u^{\beta ^{\prime }}=\left( (dx^{i^{\prime
}})^{\alpha },(dy^{a^{\prime }})^{\alpha }\right)  \label{frlccb}
\end{equation}%
with h-- and v--components, $(dx^{i^{\prime }})^{\alpha }$ and $%
(dy^{a^{\prime }})^{\alpha }$ being of type (\ref{fr1f}). For integer
values, a matrix $e_{\ \beta }^{\beta ^{\prime }}$ is inverse to $e_{\beta
^{\prime }\ }^{\ \beta }$ (physicists call such matrices as vielbeins). Such
a property holds for fractional constructions with the left Caputo
fractional derivatives.

Similarly to $\overset{\alpha }{\underline{T}}M,$ the above constructions
can be used for definition of fractional vector bundle $\overset{\alpha }{%
\underline{E}}$ on $M,$ when the fiber indices of bases run values $%
a^{\prime },b^{\prime },...=n+1,n+2,...,n+m.$

For a tangent bundle $TM,$ using 1--forms as respective duals local vector
bases, we can define the co--tangent bundle $T^{\ast }M$ on $M.$ Using
corresponding definitions of fractional forms (\ref{fr1f}) and dubbing the
for fractional differentials the above constructions, we can construct the
fractional cotangent bundle $\overset{\alpha }{\underline{T}}^{\ \ast }M$
and consider generalizations for co--vector bundle $\overset{\alpha \ }{%
\underline{E}}^{\ast }$ on $M.$ We omit details on such constructions (and
possible higher order fractional tangent/vector generalizations, fractional
osculator bundles etc) in this paper.

\subsection{Fundamental geometric objects on fractional manifolds}

Let us consider a fractional nonholonomic manifold $\overset{\alpha }{%
\mathbf{V}}$ defined by a quadruple $(\mathbf{V},\overset{\alpha }{\mathcal{N%
}},\overset{\alpha }{\mathbf{d}},\overset{\alpha }{\mathbf{I}}),$ where the
fractional differential structure $\overset{\alpha }{\mathbf{d}}$ is stated
by some (\ref{flcfb}) and (\ref{flcfdb}) and the non--integer integral
structure $\overset{\alpha }{\mathbf{I}}$ is given by rules of type (\ref%
{aux01}). A ''prime'' integer manifold $\mathbf{V}$ is of integer dimension $%
\dim $ $\mathbf{V}=n+m,n\geq 2,m\geq 1.$\ Local coordinates on $\mathbf{V}$
are labeled in the form $u=(x,y),$ or $u^{\alpha }=(x^{i},y^{a}),$ where
indices $i,j,...=1,2,...,n$ are horizontal (h) ones and $a,b,...$ $%
=1,2,...,m $ are vertical (v) ones. For some important examples, we have
that $\mathbf{V}=TM$ is a tangent bundle, or $\mathbf{V=E}$ is a vector
bundle, on $M,$ or $\mathbf{V}$ is a (semi--) Riemann manifold, with
prescribed local (non--integrable) fibred structure. A nonholonomic manifold
$\mathbf{V} $ is considered to be enabled with a non---integrable
distribution defining a nonlinear connection as we explained in point 2 of
section \ref{ss12}.

\subsubsection{N--connections for fractional nonholonomic manifolds}

A nonintegrable distribution $\overset{\alpha }{\mathcal{N}}$ $\ $for $%
\overset{\alpha }{\mathbf{V}}$ can be chosen in a form defining a nonlinear
connection structure correspondingly adapted to chosen fractional calculus
with $\overset{\alpha }{\mathbf{d}}$ and $\overset{\alpha }{\mathbf{I}}.$

\begin{definition}
A nonlinear connection (N--connection) $\overset{\alpha }{\mathbf{N}}$ is
defined by a Whitney sum of conventional h-- and v--subspaces, $\underline{h}%
\overset{\alpha }{\mathbf{V}}$ and $\underline{v}\overset{\alpha }{\mathbf{V}%
},$%
\begin{equation}
\overset{\alpha }{\underline{T}}\overset{\alpha }{\mathbf{V}}=\underline{h}%
\overset{\alpha }{\mathbf{V}}\mathbf{\oplus }\underline{v}\overset{\alpha }{%
\mathbf{V}}\mathbf{,}  \label{whit}
\end{equation}%
where the fractional tangent bundle $\overset{\alpha }{\underline{T}}\overset%
{\alpha }{\mathbf{V}}$ is constructed following the approach with the left
Caputo fractional derivative chosen for the differential structure.
\end{definition}

We note that a conventional splitting into $h$- and $v$-components depends
on the type of chosen fractional left derivative. We underline some symbols
if it is important to emphasize that the corresponding geometric objects are
induced by the Caputo fractional derivative, but we shall omit "underling"
if that will simplify the system of notation not resulting in ambiguities.

Nonholonomic manifolds with $\overset{\alpha }{\mathcal{N}}$ \ determined by
a $\overset{\alpha }{\mathbf{N}}$ are called N--anholonomic
fractional manifolds. In brief, we shall call them as fractional spaces
(geometries/manifolds). Locally, a fractional N--connection is defined by
its coefficients, $\overset{\alpha }{\mathbf{N}}\mathbf{=}\{\ ^{\alpha
}N_{i}^{a}\},$ stated with respect to a local coordinate basis,
\begin{equation}
\overset{\alpha }{\mathbf{N}}\mathbf{=}\ ^{\alpha
}N_{i}^{a}(u)(dx^{i})^{\alpha }\otimes \overset{\alpha }{\underline{\partial
}}_{a},  \label{fnccoef}
\end{equation}%
see formulas (\ref{frlcb}) and (\ref{frlccb}).

N--connections are naturally considered in Finsler and Lagrange geometry,
Einstein gravity, and various supersymmetric, noncommutative, quantum
generalizations in modern (super) string/brane theories and geometric
mechanics, see reviews of results in \cite%
{ma,bejf,ijgmmp,vrflg,vsgg,vstring,vncg,vbrane}.

\begin{proposition}
A N--connection $\overset{\alpha }{\mathbf{N}}$ defines N--adapted (i.e.
linearly depending on coefficients $\ ^{\alpha }N_{i}^{a})$ fractional frame
\begin{equation}
\ ^{\alpha }\mathbf{e}_{\beta }=\left[ \ ^{\alpha }\mathbf{e}_{j}=\overset{%
\alpha }{\underline{\partial }}_{j}-\ ^{\alpha }N_{j}^{a}\overset{\alpha }{%
\underline{\partial }}_{a},\ ^{\alpha }e_{b}=\overset{\alpha }{\underline{%
\partial }}_{b}\right]  \label{dder}
\end{equation}%
and coframe
\begin{equation}
\ ^{\alpha }\mathbf{e}^{\beta }=[\ ^{\alpha }e^{j}=(dx^{j})^{\alpha },\
^{\alpha }\mathbf{e}^{b}=(dy^{b})^{\alpha }+\ ^{\alpha
}N_{k}^{b}(dx^{k})^{\alpha }]  \label{ddif}
\end{equation}%
nonholonomic structures.
\end{proposition}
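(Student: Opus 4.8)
The plan is to verify directly that the frame (\ref{dder}) and coframe (\ref{ddif}) are dual, that each is linear in the N--connection coefficients $\ ^{\alpha}N_{i}^{a}$, and that together they carry a nontrivial anholonomy, exactly as in the integer case but with the Caputo fractional derivatives $\overset{\alpha}{\underline{\partial}}_{\beta}$ of (\ref{frlcb}) replacing ordinary partial derivatives. First I would recall that, by construction of $\overset{\alpha}{\underline{T}}\overset{\alpha}{\mathbf{V}}$ via the left Caputo derivative, the local coordinate basis $\overset{\alpha}{\underline{\partial}}_{\beta}$ and co--basis $\overset{\alpha}{d}u^{\beta}$ of (\ref{frlcb}), (\ref{frlccb}) form a dual pair, i.e. $\overset{\alpha}{d}u^{\alpha}(\overset{\alpha}{\underline{\partial}}_{\beta})=\delta^{\alpha}_{\beta}$; this is the fractional analog of the standard pairing and is the one genuinely ``fractional'' input, so I would state it explicitly (citing the Appendix formulas (\ref{fr1f}), (\ref{aux01})) rather than reprove it.

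Next I would perform the pairing computation for the N--adapted objects. Writing $\ ^{\alpha}\mathbf{e}_{j}=\overset{\alpha}{\underline{\partial}}_{j}-\ ^{\alpha}N_{j}^{a}\overset{\alpha}{\underline{\partial}}_{a}$, $\ ^{\alpha}e_{b}=\overset{\alpha}{\underline{\partial}}_{b}$ and $\ ^{\alpha}e^{j}=(dx^{j})^{\alpha}$, $\ ^{\alpha}\mathbf{e}^{b}=(dy^{b})^{\alpha}+\ ^{\alpha}N_{k}^{b}(dx^{k})^{\alpha}$, one checks the four blocks $\ ^{\alpha}\mathbf{e}^{b}(\ ^{\alpha}\mathbf{e}_{j})$, $\ ^{\alpha}\mathbf{e}^{b}(\ ^{\alpha}e_{c})$, $\ ^{\alpha}e^{i}(\ ^{\alpha}\mathbf{e}_{j})$, $\ ^{\alpha}e^{i}(\ ^{\alpha}e_{b})$ and finds they reduce to $\delta^{b}_{j}\cdot 0$-type cancellations and Kronecker deltas, purely by bilinearity of the pairing and the dual basis property above; the $\ ^{\alpha}N$-terms cancel in $\ ^{\alpha}\mathbf{e}^{b}(\ ^{\alpha}\mathbf{e}_{j})$ precisely as in the holonomic-frame case. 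Linear dependence on $\ ^{\alpha}N_{i}^{a}$ is then manifest from the defining formulas. Finally, to justify the word ``nonholonomic'' I would compute the commutators $[\ ^{\alpha}\mathbf{e}_{\alpha},\ ^{\alpha}\mathbf{e}_{\beta}]=\ ^{\alpha}W^{\gamma}_{\alpha\beta}\ ^{\alpha}\mathbf{e}_{\gamma}$ and exhibit the nonvanishing anholonomy coefficients, the essential ones being $\ ^{\alpha}W^{a}_{ji}=\ ^{\alpha}\Omega^{a}_{ji}$, the fractional curvature of the N--connection, and $\ ^{\alpha}W^{b}_{ia}=\overset{\alpha}{\underline{\partial}}_{a}(\ ^{\alpha}N_{i}^{b})$; this shows the structure is non--integrable whenever these do not all vanish.

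The step I expect to require the most care is the commutator computation, because the Caputo derivative does not obey the ordinary Leibniz rule and fractional mixed derivatives need not commute in the naive way; one must invoke the specific properties of the combined Riemann--Liouville/Caputo calculus of \cite{taras1} (summarized in the Appendix) guaranteeing that, for the chosen class of coordinate parametrizations and analytic coefficient functions, the relevant fractional derivatives behave ``formally integer'' enough for the bracket identities to close. Apart from that, the argument is a routine transcription of the integer N--adapted frame construction (as in \cite{vrflg,vnhrf1,vnhrf2}), so I would present it in the ``sketch'' style announced in point 4 of section \ref{ss12}: state the dual-basis input, display the pairing table, note linearity in $\ ^{\alpha}N$, and record the anholonomy coefficients, deferring the fractional-calculus technicalities to the Appendix.
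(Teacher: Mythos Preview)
Your proposal is correct and matches the paper's approach: the paper's proof simply records that the structure is induced by the left Caputo derivative and the N--connection coefficients (\ref{fnccoef}), then displays the nontrivial anholonomy coefficients $\ ^{\alpha}W^{a}_{ib}=\overset{\alpha}{\underline{\partial}}_{b}\ ^{\alpha}N_{i}^{a}$ and $\ ^{\alpha}W^{a}_{ij}=\ ^{\alpha}\Omega^{a}_{ji}=\ ^{\alpha}\mathbf{e}_{i}\ ^{\alpha}N_{j}^{a}-\ ^{\alpha}\mathbf{e}_{j}\ ^{\alpha}N_{i}^{a}$ from the commutator $[\ ^{\alpha}\mathbf{e}_{\alpha},\ ^{\alpha}\mathbf{e}_{\beta}]=\ ^{\alpha}W^{\gamma}_{\alpha\beta}\ ^{\alpha}\mathbf{e}_{\gamma}$, exactly as you outline. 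Your explicit duality check and your caveat about the Caputo Leibniz rule are more than the paper itself provides, but they are in the same spirit as the ``sketch'' style announced in section~\ref{ss12}.
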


\begin{proof}
The corresponding nonholonomic integro--differential fractional structure is
induced by the left Caputo derivative (\ref{lfcd}) and N--connection
coefficients in (\ref{fnccoef}). The nontrivial nonholonomy coefficients are
computed $\ ^{\alpha }W_{ib}^{a}=\overset{\alpha }{\underline{\partial }}%
_{b}\ ^{\alpha }N_{i}^{a}$ and $\ ^{\alpha }W_{ij}^{a}=\ ^{\alpha }\Omega
_{ji}^{a}=\ ^{\alpha }\mathbf{e}_{i}\ ^{\alpha }N_{j}^{a}-\ ^{\alpha }%
\mathbf{e}_{j}\ ^{\alpha }N_{i}^{a}$ (where $\ ^{\alpha }\Omega _{ji}^{a}$
are the coefficients of the N--connection curvature) for
\begin{equation*}
\left[ \ ^{\alpha }\mathbf{e}_{\alpha },\ ^{\alpha }\mathbf{e}_{\beta }%
\right] =\ ^{\alpha }\mathbf{e}_{\alpha }\ ^{\alpha }\mathbf{e}_{\beta }-\
^{\alpha }\mathbf{e}_{\beta }\ ^{\alpha }\mathbf{e}_{\alpha }=\ ^{\alpha
}W_{\alpha \beta }^{\gamma }\ ^{\alpha }\mathbf{e}_{\gamma }.
\end{equation*}%
For simplicity, in above formulas derived for (\ref{dder}) and (\ref{ddif}),
we omitted underlying of symbols of type $\ ^{\alpha }\mathbf{e}_{\beta }=[\
^{\alpha }\mathbf{e}_{j},\ ^{\alpha }e_{b}]$ even such values are determined
by fractional Caputo derivatives of type (\ref{lfcd}), which are underlined.

(End proof.)\ $\square $
\end{proof}

\subsubsection{N--adapted fractional metrics}

A second fundamental geometric object on $\overset{\alpha }{\mathbf{V}},$ a
metric $\overset{\alpha }{\mathbf{g}},$ can be defined similarly to (pseudo)
Riemannian spaces of integer dimension but for a chosen fractional
differential structure.

\begin{definition}
A (fractional) metric structure $\overset{\alpha }{\mathbf{g}}=\{\ ^{\alpha
}g_{\underline{\alpha }\underline{\beta }}\}$ is determined on a $\overset{%
\alpha }{\mathbf{V}}$ \ by a symmetric second rank tensor
\begin{equation}
\overset{\alpha }{\mathbf{g}}=\ ^{\alpha }g_{\underline{\gamma }\underline{%
\beta }}(u)(du^{\underline{\gamma }})^{\alpha }\otimes (du^{\underline{\beta
}})^{\alpha }  \label{fmcf}
\end{equation}%
for a tensor product of fractional coordinate co--bases (\ref{frlccb}).
\end{definition}

For N--adapted constructions, it is important to introduce and prove:

\begin{claim}
Any fractional metric $\overset{\alpha }{\mathbf{g}}$ can be represented
equivalently as a distinguished metric structure (d--metric), $\ \overset{%
\alpha }{\mathbf{g}}=\left[ \ ^{\alpha }g_{kj},\ ^{\alpha }g_{cb}\right] ,$
which is N--adapted to splitting (\ref{whit}),
\begin{equation}
\ \overset{\alpha }{\mathbf{g}}=\ ^{\alpha }g_{kj}(x,y)\ ^{\alpha
}e^{k}\otimes \ ^{\alpha }e^{j}+\ ^{\alpha }g_{cb}(x,y)\ ^{\alpha }\mathbf{e}%
^{c}\otimes \ ^{\alpha }\mathbf{e}^{b},  \label{m1}
\end{equation}%
where fractional N--elongated bases $\ ^{\alpha }\mathbf{e}^{\beta }=[\
^{\alpha }e^{j},\ ^{\alpha }\mathbf{e}^{b}]$ \ are defined as in (\ref{dder}%
).
\end{claim}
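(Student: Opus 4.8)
The plan is to mimic the classical (integer-dimensional) argument for writing an arbitrary metric in N-adapted block-diagonal form, since the fractional Caputo calculus has been set up precisely to make the formal rules of differentiation and duality identical to the integer case. Concretely, I would start from the coordinate-basis expression $\overset{\alpha}{\mathbf{g}}=\ ^{\alpha }g_{\underline{\gamma }\underline{\beta }}(du^{\underline{\gamma }})^{\alpha }\otimes (du^{\underline{\beta }})^{\alpha }$ from the Definition, split the underlined indices into $h$- and $v$-parts, and perform the change of cobasis from $(du^{\underline{\gamma }})^{\alpha }=((dx^{i})^{\alpha },(dy^{a})^{\alpha })$ to the N-elongated coframe $\ ^{\alpha }\mathbf{e}^{\beta }=[\ ^{\alpha }e^{j}=(dx^{j})^{\alpha },\ ^{\alpha }\mathbf{e}^{b}=(dy^{b})^{\alpha }+\ ^{\alpha }N_{k}^{b}(dx^{k})^{\alpha }]$ of the Proposition. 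Inverting, $(dy^{b})^{\alpha }=\ ^{\alpha }\mathbf{e}^{b}-\ ^{\alpha }N_{k}^{b}\ ^{\alpha }e^{k}$, one substitutes into the general quadratic form and collects terms.

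The key computational step is then to choose the N-connection coefficients $\ ^{\alpha }N_{i}^{a}$ so that the mixed $h$-$v$ blocks cancel. After substitution the metric takes the form $\overset{\alpha }{\mathbf{g}}=\ ^{\alpha }g_{ij}\ ^{\alpha }e^{i}\otimes\ ^{\alpha }e^{j}+\ ^{\alpha }g_{ab}\ ^{\alpha }\mathbf{e}^{a}\otimes\ ^{\alpha }\mathbf{e}^{b}+(\text{cross terms in }\ ^{\alpha }e^{i}\otimes\ ^{\alpha }\mathbf{e}^{b}\text{ and its transpose})$, where the cross coefficient is a linear combination of the original $\ ^{\alpha }g_{\underline{i}\underline{b}}$ and $\ ^{\alpha }g_{\underline{a}\underline{b}}\ ^{\alpha }N_{i}^{a}$. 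Setting this to zero gives $\ ^{\alpha }N_{i}^{a}=\ ^{\alpha }g^{ab}\ ^{\alpha }g_{\underline{i}\underline{b}}$ (using that, as stated in the excerpt, the inverse matrix construction $\ ^{\alpha }g^{ab}$ behaves for left Caputo derivatives exactly as in the integer case, so $\ ^{\alpha }g_{ab}$ is invertible on the $v$-fibre). With this choice the off-diagonal part vanishes identically and one reads off $\ ^{\alpha }g_{kj}$ and $\ ^{\alpha }g_{cb}$ as the remaining block entries, which establishes (\ref{m1}). Conversely, expanding $\ ^{\alpha }\mathbf{e}^{b}$ in (\ref{m1}) back into coordinate form recovers a tensor of the shape (\ref{fmcf}), so the two representations are genuinely equivalent.

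I would also note, for completeness, that the tensor product and the index manipulations are to be read in the N-adapted fractional sense — the cobases $(dx^{i})^{\alpha }$, $(dy^{a})^{\alpha }$ are the fractional $1$-forms of type (\ref{fr1f}), and the only property of the Caputo derivative actually used is that the vielbein/inverse-vielbein relation and the linear algebra of raising/lowering $v$-indices carry over verbatim; this is exactly the ``formal analogy'' invoked in point 4 of section \ref{ss12}. The main (and essentially only) obstacle is bookkeeping: one must be careful that the change-of-cobasis is performed consistently on both slots of the tensor product and that the symmetry $\ ^{\alpha }g_{\underline{\gamma }\underline{\beta }}=\ ^{\alpha }g_{\underline{\beta }\underline{\gamma }}$ is preserved so that fixing the single cross block suffices to kill both off-diagonal pieces simultaneously — there is no analytic difficulty, since no fractional integration theorems or Leibniz-type subtleties enter a purely pointwise, tensorial statement like this one.
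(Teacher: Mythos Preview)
Your proposal is correct and follows essentially the same approach as the paper: the paper's proof writes down the parametrization $\ ^{\alpha }g_{\underline{\alpha }\underline{\beta }}$ in terms of $\ ^{\alpha }g_{ij},\ ^{\alpha }g_{ab},\ ^{\alpha }N_{i}^{a}$ (their formula for the $\underline{g}_{ib}$ block is exactly your condition $\ ^{\alpha }N_{i}^{a}=\ ^{\alpha }g^{ab}\,\ ^{\alpha }g_{\underline{i}\underline{b}}$ read backwards), introduces the vielbein matrices implementing the cobasis change, and states that regrouping yields the d--metric form. You simply carry out the same linear-algebra step in the forward direction rather than presenting the answer first and verifying; the content is identical.
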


\begin{proof}
For coefficients of metric (\ref{fmcf}), we consider parametrization
\begin{equation}
\ \ ^{\alpha }g_{\underline{\alpha }\underline{\beta }}=\left[
\begin{array}{cc}
\ ^{\alpha }\underline{g}_{ij}=\ \ ^{\alpha }g_{ij}+~\ ^{\alpha }N_{i}^{a}~\
^{\alpha }N_{j}^{b}\ \ ^{\alpha }g_{ab} & ~\ ^{\alpha }\underline{g}_{ib}=~\
^{\alpha }N_{i}^{e}\ \ ^{\alpha }g_{be} \\
~\ ^{\alpha }\underline{g}_{aj}=\ ^{\alpha }N_{i}^{e}\ \ ^{\alpha }g_{be} &
\ \ ^{\alpha }\underline{g}_{ab}%
\end{array}%
\right] ,  \label{qel}
\end{equation}%
for $\ ^{\alpha }g_{\underline{\alpha }\underline{\beta }}=\ ^{\alpha }%
\underline{g}_{\alpha \beta }.$ We introduce the vielbeins
\begin{equation}
\mathbf{e}_{\alpha }^{\ \underline{\alpha }}=\left[
\begin{array}{cc}
e_{i}^{\ \underline{i}}=\delta _{i}^{\underline{i}} & e_{i}^{\ \underline{a}%
}=\ ^{\alpha }N_{i}^{b}\ \delta _{b}^{\underline{a}} \\
e_{a}^{\ \underline{i}}=0 & e_{a}^{\ \underline{a}}=\delta _{a}^{\underline{a%
}}%
\end{array}%
\right] ,\ \mathbf{e}_{\ \underline{\alpha }}^{\alpha }=\left[
\begin{array}{cc}
e_{\ \underline{i}}^{i}=\delta _{\underline{i}}^{i} & e_{\ \underline{i}%
}^{b}=-\ ^{\alpha }N_{k}^{b}\ \delta _{\underline{i}}^{k} \\
e_{\ \underline{a}}^{i}=0 & e_{\ \underline{a}}^{a}=\delta _{\underline{a}%
}^{a}%
\end{array}%
\right] ,  \label{fr}
\end{equation}%
where $\delta _{\underline{i}}^{i}$ is the Kronecher symbol, and define
nonholonomic frames
\begin{equation*}
\ ^{\alpha }\mathbf{e}_{\beta }=\mathbf{e}_{\beta }^{\ \underline{\beta }}\
\partial _{\underline{\beta }}\mbox{\ and \ }\ ^{\alpha }\mathbf{e}^{\alpha
}=\mathbf{e}_{\ \underline{\beta }}^{\beta }(du^{\underline{\beta }%
})^{\alpha },
\end{equation*}%
which are N--adapted frames, respectively, of type (\ref{dder}) and (\ref%
{ddif}). Re--grouping the coefficients, we get the formula (\ref{m1}). $%
\square $
\end{proof}

\subsubsection{Distinguished fractional connections}

Linear connections on fractional $\overset{\alpha }{\mathbf{V}}$ may be
adapted to the N--connection structure as for the integer dimensions.

\begin{definition}
A distinguished connection (d--connection) $\overset{\alpha }{\mathbf{D}}$
on $\overset{\alpha }{\mathbf{V}}$ is a linear connection preserving under
parallel transports the Whitney sum (\ref{whit}).
\end{definition}

A covariant fractional calculus on nonholonomic manifolds can be developed
following the formalism of fractional differential forms. For a fractional
d--connection $\overset{\alpha }{\mathbf{D}},$ \ we can introduce a
N--adapted differential 1--form of type (\ref{fr1f})
\begin{equation}
\ ^{\alpha }\mathbf{\Gamma }_{\ \beta }^{\tau }=\ ^{\alpha }\mathbf{\Gamma }%
_{\ \beta \gamma }^{\tau }\ ^{\alpha }\mathbf{e}^{\gamma },  \label{fdcf}
\end{equation}%
with the coefficients defined with respect to (\ref{ddif}) and (\ref{dder})
and parametrized  the form $\ ^{\alpha }\mathbf{\Gamma }_{\ \tau \beta
}^{\gamma }=\left( \ ^{\alpha }L_{jk}^{i},\ ^{\alpha }L_{bk}^{a},\ ^{\alpha
}C_{jc}^{i},\ ^{\alpha }C_{bc}^{a}\right) .$

We also consider that the absolute fractional differential $\ ^{\alpha }%
\mathbf{d}=\ _{\ _{1}x}\overset{\alpha }{d}_{x}+\ _{\ _{1}y}\overset{\alpha }%
{d}_{y}$ is a N--adapted fractional operator $\ ^{\alpha }\mathbf{d:=}\
^{\alpha }\mathbf{e}^{\beta }\ ^{\alpha }\mathbf{e}_{\beta }$ defined by
exterior h- and v--derivatives of type (\ref{feder}), \ when \
\begin{equation*}
\ _{\ _{1}x}\overset{\alpha }{d}_{x}:=(dx^{i})^{\alpha }\ \ _{\ _{1}x}%
\overset{\alpha }{\underline{\partial }}_{i}=\ ^{\alpha }e^{j}\ ^{\alpha }%
\mathbf{e}_{j}\mbox{ and }_{\ _{1}y}\overset{\alpha }{d}_{y}:=(dy^{a})^{%
\alpha }\ \ _{\ _{1}x}\overset{\alpha }{\underline{\partial }}_{a}=\
^{\alpha }\mathbf{e}^{b}\ ^{\alpha }e_{b}.
\end{equation*}

\begin{definition}
The torsion of a fractional d--connection $\overset{\alpha }{\mathbf{D}}=\{\
^{\alpha }\mathbf{\Gamma }_{\ \beta \gamma }^{\tau }\}$ is
\begin{equation}
\ ^{\alpha }\mathcal{T}^{\tau }\doteqdot \overset{\alpha }{\mathbf{D}}\
^{\alpha }\mathbf{e}^{\tau }=\ ^{\alpha }\mathbf{d}\ ^{\alpha }\mathbf{e}%
^{\tau }+\ ^{\alpha }\mathbf{\Gamma }_{\ \beta }^{\tau }\wedge \ ^{\alpha }%
\mathbf{e}^{\beta }.  \label{tors}
\end{equation}
\end{definition}

Following an explicit fractional (and N--adapted) differential form calculus
with respect to (\ref{ddif}), we prove:

\begin{theorem}
Locally, the fractional torsion $\ ^{\alpha }\mathcal{T}^{\tau }$ (\ref{tors}%
) is characterized by its coefficients (d--torsion)
\begin{eqnarray}
\ ^{\alpha }T_{\ jk}^{i} &=&\ ^{\alpha }L_{\ jk}^{i}-\ ^{\alpha }L_{\
kj}^{i},\ \ ^{\alpha }T_{\ ja}^{i}=-\ ^{\alpha }T_{\ aj}^{i}=\ ^{\alpha
}C_{\ ja}^{i},\ \ ^{\alpha }T_{\ ji}^{a}=\ ^{\alpha }\Omega _{\ ji}^{a},\
\notag \\
\ ^{\alpha }T_{\ bi}^{a} &=&-\ ^{\alpha }T_{\ ib}^{a}=\ ^{\alpha }e_{b}\
^{\alpha }N_{i}^{a}-\ ^{\alpha }L_{\ bi}^{a},\ \ ^{\alpha }T_{\ bc}^{a}=\
^{\alpha }C_{\ bc}^{a}-\ ^{\alpha }C_{\ cb}^{a}.  \label{dtors}
\end{eqnarray}
\end{theorem}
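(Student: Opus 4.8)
The plan is to compute the torsion 2-form $\ ^{\alpha}\mathcal{T}^{\tau} = \ ^{\alpha}\mathbf{d}\ ^{\alpha}\mathbf{e}^{\tau} + \ ^{\alpha}\mathbf{\Gamma}_{\ \beta}^{\tau} \wedge \ ^{\alpha}\mathbf{e}^{\beta}$ explicitly in the N-adapted coframe $\ ^{\alpha}\mathbf{e}^{\beta} = [\ ^{\alpha}e^{j} = (dx^{j})^{\alpha},\ ^{\alpha}\mathbf{e}^{b} = (dy^{b})^{\alpha} + \ ^{\alpha}N_{k}^{b}(dx^{k})^{\alpha}]$ from \eqref{ddif}, then read off the components with respect to the basis of 2-forms $\ ^{\alpha}\mathbf{e}^{\alpha}\wedge\ ^{\alpha}\mathbf{e}^{\beta}$. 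First I would expand $\ ^{\alpha}\mathbf{d}\ ^{\alpha}\mathbf{e}^{\tau}$: for the h-part $\ ^{\alpha}\mathbf{d}(dx^{j})^{\alpha}=0$ (since the fractional exterior derivative squares to zero on the coordinate 1-forms, as recorded in the Appendix formulas), while for the v-part $\ ^{\alpha}\mathbf{d}\ ^{\alpha}\mathbf{e}^{b} = \ ^{\alpha}\mathbf{d}(\ ^{\alpha}N_{k}^{b}(dx^{k})^{\alpha}) = (\ ^{\alpha}\mathbf{e}_{\gamma}\ ^{\alpha}N_{k}^{b})\ ^{\alpha}\mathbf{e}^{\gamma}\wedge(dx^{k})^{\alpha}$, which after splitting $\gamma$ into h- and v-indices and using the definitions $\ ^{\alpha}W_{ib}^{a}=\ ^{\alpha}e_{b}\ ^{\alpha}N_{i}^{a}$ and $\ ^{\alpha}\Omega_{ji}^{a}$ from the Proposition's proof reproduces the N-connection curvature term. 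This is precisely where the $\ ^{\alpha}\Omega_{\ ji}^{a}$ in $\ ^{\alpha}T_{\ ji}^{a}$ and the $\ ^{\alpha}e_{b}\ ^{\alpha}N_{i}^{a}$ in $\ ^{\alpha}T_{\ bi}^{a}$ originate.

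Next I would expand the connection term $\ ^{\alpha}\mathbf{\Gamma}_{\ \beta}^{\tau}\wedge\ ^{\alpha}\mathbf{e}^{\beta} = \ ^{\alpha}\mathbf{\Gamma}_{\ \beta\gamma}^{\tau}\ ^{\alpha}\mathbf{e}^{\gamma}\wedge\ ^{\alpha}\mathbf{e}^{\beta}$ using the parametrization $\ ^{\alpha}\mathbf{\Gamma}_{\ \tau\beta}^{\gamma}=(\ ^{\alpha}L_{jk}^{i},\ ^{\alpha}L_{bk}^{a},\ ^{\alpha}C_{jc}^{i},\ ^{\alpha}C_{bc}^{a})$. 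Carefully keeping track of the h/v-splitting of each index triple, antisymmetrizing in the wedge, and collecting coefficients of $\ ^{\alpha}e^{j}\wedge\ ^{\alpha}e^{k}$, $\ ^{\alpha}e^{j}\wedge\ ^{\alpha}\mathbf{e}^{a}$, and $\ ^{\alpha}\mathbf{e}^{b}\wedge\ ^{\alpha}\mathbf{e}^{c}$, I would match each of the five families of components in \eqref{dtors}. The $L$-terms with both lower indices horizontal give the antisymmetrization $\ ^{\alpha}L_{\ jk}^{i}-\ ^{\alpha}L_{\ kj}^{i}$; the mixed $C$-term gives $\ ^{\alpha}C_{\ ja}^{i}$ with the stated sign pattern; the $L_{\ bi}^{a}$-term combines with the $\ ^{\alpha}e_{b}\ ^{\alpha}N_{i}^{a}$ coming from $\ ^{\alpha}\mathbf{d}\ ^{\alpha}\mathbf{e}^{a}$; and the purely v $C$-term gives $\ ^{\alpha}C_{\ bc}^{a}-\ ^{\alpha}C_{\ cb}^{a}$.

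The main obstacle is purely bookkeeping: one must be scrupulous about which coframe element (the plain $(dx)^{\alpha}$ versus the N-elongated $(dy)^{\alpha}+N(dx)^{\alpha}$) carries each index, because the cross-terms $N_{k}^{b}(dx^{k})^{\alpha}$ hidden inside $\ ^{\alpha}\mathbf{e}^{b}$ are exactly what feed the $\ ^{\alpha}\Omega_{\ ji}^{a}$ and the $\ ^{\alpha}e_{b}\ ^{\alpha}N_{i}^{a}$ contributions, and it is easy to double-count or drop them. The key point making this work at all — and the reason the statement is essentially identical to the integer-dimensional case — is that, as established in the Proposition and its proof, the left Caputo fractional derivative satisfies a Leibniz-type rule on the relevant forms and the fractional exterior derivative annihilates the coordinate 1-forms, so the fractional calculus is, at the level of these manipulations, formally the same as the classical one; thus the computation is a transcription of the standard d-torsion formula (see \cite{vrflg,vsgg}) with the $\ ^{\alpha}$ labels and underlined Caputo operators carried along. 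I would therefore present the derivation of one or two representative components in detail and assert the remaining ones follow by the same N-adapted differential-form computation.
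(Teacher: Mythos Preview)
Your proposal is correct and follows exactly the approach the paper indicates: the paper merely says ``Following an explicit fractional (and N--adapted) differential form calculus with respect to (\ref{ddif}), we prove'' and then states the formulas, so your outline of expanding $\ ^{\alpha}\mathbf{d}\ ^{\alpha}\mathbf{e}^{\tau}$ and $\ ^{\alpha}\mathbf{\Gamma}_{\ \beta}^{\tau}\wedge\ ^{\alpha}\mathbf{e}^{\beta}$ in the N--adapted coframe and reading off the h/v components is precisely what is intended. One small wording caution: the fractional Caputo derivative does \emph{not} satisfy the ordinary Leibniz rule (the Appendix says this explicitly); what makes the form calculus work is rather that in the generalized Leibniz expansion all higher terms vanish because $\overset{k}{\partial}_{x}(dx)^{\alpha}=0$ for $k\geq 1$, so the exterior derivative of a 1--form reduces to the classical-looking formula --- your conclusion is right, but phrase the justification that way.
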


For integer $\alpha ,$ we get the same formulas as in \cite%
{ijgmmp,vrflg,vsgg,ma}. This is possible if we consider on $\overset{\alpha }%
{\mathbf{V}}$ a differential structure which locally can be induced by the
left Caputo fractional derivatives and associated differentials.

\begin{definition}
The curvature of a fractional $\overset{\alpha }{\mathbf{D}}=\{\ ^{\alpha }%
\mathbf{\Gamma }_{\ \beta \gamma }^{\tau }\}$ is
\begin{equation}
\ ^{\alpha }\mathcal{R}_{~\beta }^{\tau }\doteqdot \overset{\alpha }{\mathbf{%
D}}\mathbf{\ ^{\alpha }\Gamma }_{\ \beta }^{\tau }=\ ^{\alpha }\mathbf{d\
^{\alpha }\Gamma }_{\ \beta }^{\tau }-\ ^{\alpha }\mathbf{\Gamma }_{\ \beta
}^{\gamma }\wedge \ ^{\alpha }\mathbf{\Gamma }_{\ \gamma }^{\tau }=\
^{\alpha }\mathbf{R}_{\ \beta \gamma \delta }^{\tau }\ ^{\alpha }\mathbf{e}%
^{\gamma }\wedge \ ^{\alpha }\mathbf{e}  \label{dcv}
\end{equation}
\end{definition}

A straightforward fractional differential form calculus for (\ref{fdcf})
gives a proof of

\begin{theorem}
Locally, the fractional curvature $\ ^{\alpha }\mathcal{R}_{~\beta }^{\tau }$
(\ref{dcv}) is characterized by its coefficients (d--curvature)
\begin{eqnarray}
\ ^{\alpha }R_{\ hjk}^{i} &=&\ ^{\alpha }\mathbf{e}_{k}\ ^{\alpha }L_{\
hj}^{i}-\ ^{\alpha }\mathbf{e}_{j}\ ^{\alpha }L_{\ hk}^{i}  \notag \\
&&+\ ^{\alpha }L_{\ hj}^{m}\ ^{\alpha }L_{\ mk}^{i}-\ ^{\alpha }L_{\
hk}^{m}\ ^{\alpha }L_{\ mj}^{i}-\ ^{\alpha }C_{\ ha}^{i}\ ^{\alpha }\Omega
_{\ kj}^{a},  \notag \\
\ ^{\alpha }R_{\ bjk}^{a} &=&\ ^{\alpha }\mathbf{e}_{k}\ ^{\alpha }L_{\
bj}^{a}-\ ^{\alpha }\mathbf{e}_{j}\ ^{\alpha }L_{\ bk}^{a}  \notag \\
&&+\ ^{\alpha }L_{\ bj}^{c}\ ^{\alpha }L_{\ ck}^{a}-\ ^{\alpha }L_{\
bk}^{c}\ ^{\alpha }L_{\ cj}^{a}-\ ^{\alpha }C_{\ bc}^{a}\ ^{\alpha }\Omega
_{\ kj}^{c},  \notag \\
\ ^{\alpha }R_{\ jka}^{i} &=&\ ^{\alpha }e_{a}\ ^{\alpha }L_{\ jk}^{i}-\
^{\alpha }D_{k}\ ^{\alpha }C_{\ ja}^{i}+\ ^{\alpha }C_{\ jb}^{i}T_{\ ka}^{b},
\label{dcurv} \\
\ ^{\alpha }R_{\ bka}^{c} &=&\ ^{\alpha }e_{a}\ ^{\alpha }L_{\ bk}^{c}-\
^{\alpha }D_{k}\ ^{\alpha }C_{\ ba}^{c}+\ ^{\alpha }C_{\ bd}^{c}\ ^{\alpha
}T_{\ ka}^{c},  \notag \\
\ ^{\alpha }R_{\ jbc}^{i} &=&\ ^{\alpha }e_{c}\ ^{\alpha }C_{\ jb}^{i}-\
^{\alpha }e_{b}\ ^{\alpha }C_{\ jc}^{i}+\ ^{\alpha }C_{\ jb}^{h}\ ^{\alpha
}C_{\ hc}^{i}-\ ^{\alpha }C_{\ jc}^{h}\ ^{\alpha }C_{\ hb}^{i},  \notag \\
\ ^{\alpha }R_{\ bcd}^{a} &=&\ ^{\alpha }e_{d}\ ^{\alpha }C_{\ bc}^{a}-\
^{\alpha }e_{c}\ ^{\alpha }C_{\ bd}^{a}+\ ^{\alpha }C_{\ bc}^{e}\ ^{\alpha
}C_{\ ed}^{a}-\ ^{\alpha }C_{\ bd}^{e}\ ^{\alpha }C_{\ ec}^{a}.  \notag
\end{eqnarray}
\end{theorem}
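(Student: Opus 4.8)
**

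The plan is to mimic the classical (integer-dimensional) computation of the curvature coefficients of an N-adapted d-connection, replacing ordinary partial derivatives by the left Caputo fractional derivatives $\overset{\alpha}{\underline{\partial}}$ throughout, and to exploit the abstract formal analogy emphasized in point 4 of section \ref{ss12}. Concretely, I would start from the definition (\ref{dcv}), namely $\ ^{\alpha}\mathcal{R}_{~\beta}^{\tau}=\ ^{\alpha}\mathbf{d}\ ^{\alpha}\mathbf{\Gamma}_{\ \beta}^{\tau}-\ ^{\alpha}\mathbf{\Gamma}_{\ \beta}^{\gamma}\wedge\ ^{\alpha}\mathbf{\Gamma}_{\ \gamma}^{\tau}$, insert the N-adapted parametrization $\ ^{\alpha}\mathbf{\Gamma}_{\ \beta\gamma}^{\tau}=(\ ^{\alpha}L_{jk}^{i},\ ^{\alpha}L_{bk}^{a},\ ^{\alpha}C_{jc}^{i},\ ^{\alpha}C_{bc}^{a})$ together with the connection $1$-form $\ ^{\alpha}\mathbf{\Gamma}_{\ \beta}^{\tau}=\ ^{\alpha}\mathbf{\Gamma}_{\ \beta\gamma}^{\tau}\ ^{\alpha}\mathbf{e}^{\gamma}$ from (\ref{fdcf}), and expand everything in the N-elongated coframe $\ ^{\alpha}\mathbf{e}^{\beta}=[\ ^{\alpha}e^{j},\ ^{\alpha}\mathbf{e}^{b}]$ of (\ref{ddif}).

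The key steps, in order: (i) compute $\ ^{\alpha}\mathbf{d}\ ^{\alpha}\mathbf{\Gamma}_{\ \beta}^{\tau}$ using the N-adapted exterior fractional differential $\ ^{\alpha}\mathbf{d}=\ ^{\alpha}\mathbf{e}^{\beta}\ ^{\alpha}\mathbf{e}_{\beta}$ introduced just before the torsion definition; here one needs the fractional Leibniz-type behaviour of $\ ^{\alpha}\mathbf{d}$ on products of a function with a basis $1$-form, and the structure equations $[\ ^{\alpha}\mathbf{e}_{\alpha},\ ^{\alpha}\mathbf{e}_{\beta}]=\ ^{\alpha}W_{\alpha\beta}^{\gamma}\ ^{\alpha}\mathbf{e}_{\gamma}$ with the nonholonomy coefficients $\ ^{\alpha}W_{ib}^{a}=\overset{\alpha}{\underline{\partial}}_{b}\ ^{\alpha}N_{i}^{a}$ and $\ ^{\alpha}W_{ij}^{a}=\ ^{\alpha}\Omega_{ji}^{a}$ already computed in the proof of the Proposition. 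These nonholonomy terms are exactly what produce the "extra" pieces $-\ ^{\alpha}C_{\ ha}^{i}\ ^{\alpha}\Omega_{\ kj}^{a}$ and $-\ ^{\alpha}C_{\ bc}^{a}\ ^{\alpha}\Omega_{\ kj}^{c}$ in the first two lines of (\ref{dcurv}). (ii) Compute the quadratic term $\ ^{\alpha}\mathbf{\Gamma}_{\ \beta}^{\gamma}\wedge\ ^{\alpha}\mathbf{\Gamma}_{\ \gamma}^{\tau}$ by plugging in the $h$-$v$ split of $\ ^{\alpha}\mathbf{\Gamma}$; this yields the bilinear $L$-$L$, $L$-$C$, $C$-$C$ products. (iii) Collect coefficients of the six independent wedge monomials — $\ ^{\alpha}e^{k}\wedge\ ^{\alpha}e^{j}$, $\ ^{\alpha}e^{k}\wedge\ ^{\alpha}\mathbf{e}^{a}$, $\ ^{\alpha}\mathbf{e}^{b}\wedge\ ^{\alpha}\mathbf{e}^{c}$ in both $h$- and $v$-index positions — and read off the six formulas in (\ref{dcurv}); the mixed terms $\ ^{\alpha}D_{k}\ ^{\alpha}C_{\ ja}^{i}$ arise by combining $\ ^{\alpha}\mathbf{e}_{k}\ ^{\alpha}C$ with the appropriate $L$-$C$ products, and the $\ ^{\alpha}C_{\ jb}^{i}T_{\ ka}^{b}$ pieces come from the $\ ^{\alpha}e_{b}\ ^{\alpha}N$ part of the nonholonomy coefficients combined with the $v$-$v$ connection terms, using the d-torsion formula (\ref{dtors}) of the previous Theorem. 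I would simply indicate these groupings rather than write out each of the several dozen terms.

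The main obstacle — and the reason this is a genuine lemma rather than a triviality — is verifying that the fractional exterior differential $\ ^{\alpha}\mathbf{d}$ really does obey the graded Leibniz rule and satisfies $\ ^{\alpha}\mathbf{d}\circ\ ^{\alpha}\mathbf{d}=0$ (or at least the weaker identities needed here) when built from Caputo derivatives, so that the bookkeeping of signs and of the $\Gamma\wedge\Gamma$ term goes through exactly as in the integer case. This rests on the properties of the combined Riemann--Liouville/Caputo fractional calculus of Ref.~\cite{taras1} summarized in the Appendix; the crucial point, as stressed in the footnote of the introduction, is that the Caputo derivative kills constants, which is what makes $e_{\ \beta}^{\beta'}$ genuinely inverse to $e_{\beta'\ }^{\ \beta}$ and lets the vielbein manipulations of the earlier Claim carry over. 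Once that formal calculus is in place, the derivation of (\ref{dcurv}) is line-by-line identical to the classical one in \cite{ma,ijgmmp,vrflg,vsgg}, with every $\partial$ underlined and decorated by $\alpha$; I would close the proof by remarking that setting $\alpha=1$ recovers precisely the known d-curvature formulas, which serves as a consistency check.
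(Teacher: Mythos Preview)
Your proposal is correct and follows precisely the route the paper itself takes: the paper's entire proof is the single sentence ``A straightforward fractional differential form calculus for (\ref{fdcf}) gives a proof,'' and what you have written is exactly a careful unpacking of that calculation --- expanding $\ ^{\alpha}\mathbf{d}\ ^{\alpha}\mathbf{\Gamma}_{\ \beta}^{\tau}-\ ^{\alpha}\mathbf{\Gamma}_{\ \beta}^{\gamma}\wedge\ ^{\alpha}\mathbf{\Gamma}_{\ \gamma}^{\tau}$ in the N--adapted coframe, using the nonholonomy coefficients $\ ^{\alpha}W_{\alpha\beta}^{\gamma}$, and collecting terms by $h$--$v$ type. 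Your attention to the Leibniz/nilpotency issues for the Caputo-based fractional exterior derivative is in fact more scrupulous than the paper, which simply defers these properties to the formalism of \cite{taras1} and the ``formal analogy'' principle of section~\ref{ss12}.
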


Formulas (\ref{dtors}) and (\ref{dcurv}) encode integro--differential
nonholonomic structures modeling certain types of fractional differential
geometric models. For integer dimensions, on vector/tangent bundles, such
constructions are typical ones for Lagrange--Finsler geometry \cite{ma} and
various types generalizations in modern geometry and gravity \cite%
{vncg,vsgg,vrflg,vbrane}.

Contracting respectively the components of (\ref{dcurv}), we can prove

\begin{proposition}
The fractional Ricci tensor $\ ^{\alpha }\mathcal{R}ic=\{\ ^{\alpha }\mathbf{%
R}_{\alpha \beta }\doteqdot \ ^{\alpha }\mathbf{R}_{\ \alpha \beta \tau
}^{\tau }\}$ is characterized by h- v--components, i.e. d--tensors,%
\begin{equation}
\ ^{\alpha }R_{ij}\doteqdot \ ^{\alpha }R_{\ ijk}^{k},\ \ \ ^{\alpha
}R_{ia}\doteqdot -\ ^{\alpha }R_{\ ika}^{k},\ \ ^{\alpha }R_{ai}\doteqdot \
^{\alpha }R_{\ aib}^{b},\ \ ^{\alpha }R_{ab}\doteqdot \ ^{\alpha }R_{\
abc}^{c}.  \label{dricci}
\end{equation}
\end{proposition}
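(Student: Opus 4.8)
The plan is to read the four d--tensor blocks of $\ ^{\alpha }\mathcal{R}ic$ directly off the fractional d--curvature coefficients (\ref{dcurv}) by performing the contraction prescribed in the statement, $\ ^{\alpha }\mathbf{R}_{\alpha \beta }\doteqdot \ ^{\alpha }\mathbf{R}_{\ \alpha \beta \tau }^{\tau }$. First I would split each free index into its h-- and v--parts, writing $\alpha =(i,a)$ and $\beta =(j,b)$, and let the summation index $\tau $ run over $(k,c)$, so that $\ ^{\alpha }\mathbf{R}_{\alpha \beta }=\ ^{\alpha }\mathbf{R}_{\ \alpha \beta k}^{k}+\ ^{\alpha }\mathbf{R}_{\ \alpha \beta c}^{c}$ for every choice of types of $\alpha ,\beta $.

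The structural fact that makes the bookkeeping collapse is that $\overset{\alpha }{\mathbf{D}}$ preserves the Whitney sum (\ref{whit}) by definition of a d--connection; this is exactly why the d--curvature theorem yielding (\ref{dcurv}) produces only six families of coefficients, i.e. a coefficient $\ ^{\alpha }\mathbf{R}_{\ \beta \gamma \delta }^{\tau }$ can be nonzero only when the output index $\tau $ and the ``rotated'' index $\beta $ are of the same kind (both h, or both v). Applying this to the four cases: for $\alpha =i,\ \beta =j$ the term $\ ^{\alpha }R_{\ ijc}^{c}$ vanishes ($\tau =c$ is v while $\beta =i$ is h), leaving $\ ^{\alpha }R_{ij}=\ ^{\alpha }R_{\ ijk}^{k}$; for $\alpha =a,\ \beta =b$ the term $\ ^{\alpha }R_{\ abk}^{k}$ vanishes, leaving $\ ^{\alpha }R_{ab}=\ ^{\alpha }R_{\ abc}^{c}$; for $\alpha =a,\ \beta =i$ only the $\tau =c$ contribution survives, giving $\ ^{\alpha }R_{ai}=\ ^{\alpha }R_{\ aib}^{b}$; and for $\alpha =i,\ \beta =a$ only the $\tau =k$ contribution survives, where I would invoke the antisymmetry of the d--curvature in its last two lower indices, $\ ^{\alpha }R_{\ iak}^{k}=-\ ^{\alpha }R_{\ ika}^{k}$, to account for the minus sign recorded in (\ref{dricci}). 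Substituting the explicit right--hand sides from (\ref{dcurv}) then exhibits each of the four blocks as an h--v d--tensor built from $\ ^{\alpha }L_{jk}^{i},\ ^{\alpha }L_{bk}^{a},\ ^{\alpha }C_{jc}^{i},\ ^{\alpha }C_{bc}^{a}$ and the N--connection curvature $\ ^{\alpha }\Omega _{ji}^{a}$.

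Finally I would remark that since $\ ^{\alpha }\mathcal{R}_{~\beta }^{\tau }$ is a fractional tensor--valued $2$--form and the contraction is carried out in the N--adapted frames (\ref{dder})--(\ref{ddif}), which respect the splitting (\ref{whit}), the four contractions are automatically the components of a d--tensor and exhaust the independent pieces of $\ ^{\alpha }\mathcal{R}ic$; this is the precise content of ``is characterized by''. The only genuine obstacle is the index--type case analysis together with keeping the antisymmetry sign in the mixed h--v block straight; everything else is formally identical to the integer--dimensional computation of \cite{ma,vrflg,vsgg,ijgmmp}, the sole difference being the systematic replacement of $\partial _{i}$ and $dx^{i}$ by the left Caputo operators $\ _{\ _{1}x^{i}}\overset{\alpha }{\underline{\partial }}_{i}$ and the fractional differentials $(dx^{i})^{\alpha }$.
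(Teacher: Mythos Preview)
Your proposal is correct and follows essentially the same approach as the paper, which simply states that the result is obtained by ``contracting respectively the components of (\ref{dcurv})''. You have just spelled out that contraction in full detail, including the index--type case analysis and the antisymmetry argument for the minus sign in $\ ^{\alpha }R_{ia}$, which the paper leaves implicit.
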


It is obvious that the fractional Ricci tensor $\ ^{\alpha }\mathbf{R}%
_{\alpha \beta }$ is not symmetric for arbitrary fractional d--connecti\-ons.

For a fractional d--metric structure (\ref{m1}), we can introduce:

\begin{definition}
The scalar curvature of a fractional d--connection $\overset{\alpha }{%
\mathbf{D}}$ is
\begin{eqnarray}
\ _{s}^{\alpha }\mathbf{R} &\doteqdot &\ ^{\alpha }\mathbf{g}^{\tau \beta }\
^{\alpha }\mathbf{R}_{\tau \beta }=\ ^{\alpha }R+\ ^{\alpha }S,
\label{sdccurv} \\
\ ^{\alpha }R &=&\ ^{\alpha }g^{ij}\ ^{\alpha }R_{ij},\ \ ^{\alpha }S=\
^{\alpha }g^{ab}\ ^{\alpha }R_{ab},  \notag
\end{eqnarray}%
defined by a sum the h-- and v--components of (\ref{dricci}) and
contractions with the inverse coefficients to a d--metric (\ref{m1}).
\end{definition}

\begin{proposition}
\textbf{-Definition: } The Einstein tensor $\ ^{\alpha }\mathcal{E}ns=\{\
^{\alpha }\mathbf{G}_{\alpha \beta }\}$ for a fractional d--connection $%
\overset{\alpha }{\mathbf{D}}$ is computed in standard form
\begin{equation}
\ ^{\alpha }\mathbf{G}_{\alpha \beta }:=\ ^{\alpha }\mathbf{R}_{\alpha \beta
}-\frac{1}{2}\ ^{\alpha }\mathbf{g}_{\alpha \beta }\ \ _{s}^{\alpha }\mathbf{%
R.}  \label{enstdt}
\end{equation}
\end{proposition}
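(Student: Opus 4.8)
The statement is in essence a definition, so the ``proof'' consists in checking that the right--hand side of (\ref{enstdt}) is a well--defined N--adapted object and that it specializes correctly. First I would observe that each ingredient has already been shown to be N--adapted: $\ ^{\alpha }\mathbf{R}_{\alpha \beta }$ is the fractional Ricci d--tensor of the previous Proposition, with h--v components $\ ^{\alpha }R_{ij},\ ^{\alpha }R_{ia},\ ^{\alpha }R_{ai},\ ^{\alpha }R_{ab}$ given by (\ref{dricci}); $\ ^{\alpha }\mathbf{g}_{\alpha \beta }$ is the d--metric of (\ref{m1})--(\ref{qel}); and $\ _{s}^{\alpha }\mathbf{R}=\ ^{\alpha }R+\ ^{\alpha }S$ is the fractional scalar curvature from (\ref{sdccurv}). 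Since a linear combination (with the scalar coefficient $-1/2$) of d--tensors, with indices raised and lowered only through the inverse d--metric $\ ^{\alpha }\mathbf{g}^{\tau \beta }$, is again a d--tensor, the object $\ ^{\alpha }\mathbf{G}_{\alpha \beta }$ is N--adapted and decomposes into the same h--v blocks as $\ ^{\alpha }\mathbf{R}_{\alpha \beta }$, which one reads off block by block from (\ref{dricci}) and (\ref{sdccurv}).

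Second, I would stress the single point where fractional calculus could in principle intervene but does not. All operations entering (\ref{enstdt})---the contractions in (\ref{dricci}), the contractions with $\ ^{\alpha }g^{ij}$ and $\ ^{\alpha }g^{ab}$ in (\ref{sdccurv}), and the index lowering by $\ ^{\alpha }\mathbf{g}_{\alpha \beta }$---are purely algebraic (pointwise) tensorial operations on coefficients, not fractional differentiations. Hence none of the non--Leibniz subtleties of the Caputo derivative affect the construction: the fractional differential content is entirely carried by the curvature coefficients (\ref{dcurv}), and the passage from them to $\ ^{\alpha }\mathbf{G}_{\alpha \beta }$ is formally identical to the integer case treated in \cite{ijgmmp,vrflg,vsgg,ma}. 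This is precisely what justifies the phrase ``computed in standard form'' and is the only thing one really has to argue.

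Finally I would record two consistency checks. For integer $\alpha $ with $\ ^{\alpha }\mathbf{D}$ taken to be the Levi--Civita connection of $\ ^{\alpha }\mathbf{g}$, (\ref{enstdt}) reproduces the usual Einstein tensor; for a generic fractional d--connection the Ricci d--tensor is non--symmetric, as already noted after (\ref{dricci}), and this asymmetry is inherited by $\ ^{\alpha }\mathbf{G}_{\alpha \beta }$, so no symmetry may be assumed in the h--v bookkeeping. There is no genuine obstacle here; the only care needed is in tracking the four h--v blocks. If one additionally assumes $\ ^{\alpha }\mathbf{D}$ metric compatible (for instance the canonical d--connection), one may append the remark that a contracted fractional Bianchi identity holds for the symmetric part of $\ ^{\alpha }\mathbf{G}_{\alpha \beta }$, which is what the Ricci--flow equations will later use; but this is not required for the Proposition--Definition as stated.
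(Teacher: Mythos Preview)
Your proposal is correct, and in fact you give considerably more than the paper does: the paper treats this Proposition--Definition as a pure definition and provides no proof at all, moving directly to remarks that ``such a tensor can be used for various fractional generalizations of the Einstein and Lagrange--Finsler gravity models.'' Your observation that the passage from $\ ^{\alpha }\mathbf{R}_{\alpha \beta }$ and $\ _{s}^{\alpha }\mathbf{R}$ to $\ ^{\alpha }\mathbf{G}_{\alpha \beta }$ is purely algebraic (and hence untouched by Caputo non--Leibniz issues) is exactly the reason the paper can afford to say ``computed in standard form'' without further argument, and your consistency checks (integer limit, inherited non--symmetry) are appropriate but not present in the paper.
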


Such a tensor can be used for various fractional generalizations of the
Einstein and Lagrange--Finsler gravity models from \cite%
{ijgmmp,vrflg,vsgg,ma}. It should be emphasized that variants of fractional
Ricci and Einstein tensor were considered in \cite{albu,munk}, respectively,
for generalized fractional Rimann--Finsler and Einstein spaces but with RL
fractional derivatives. Technically, it is a very cumbersome task to find
solutions of such sophisticate integro--differential equations and study
possible physical implications. In our approach, working with the left
Caputo fractional derivative and by corresponding nonholonomic transforms,
we can separate the equations in fractional equations in such a form that
the resulting systems of partial differential and integral equations can
integrated exactly in very general form similarly to the integer cases
outlined for different models of gravity theory in \cite%
{vexsol,ijgmmp,vrflg,vsgg} and, for nonholonomic Ricci flows and
applications to physics, in \cite{nhrfs1,nhrfs2,nhrfs2,rfv1,rfvv,vnhrf3}.

\subsubsection{The fractional canonical d--connection and Levi--Civita
connection}

There are an infinite number of fractional d--connections $\overset{\alpha }{%
\mathbf{D}}$ on $\overset{\alpha }{\mathbf{V}}.$ For applications in modern
geometry and physics, a special interest present subclasses of such linear
connections which are metric compatible with a metric structure, i.e. $%
\overset{\alpha }{\mathbf{D}}\left( \ ^{\alpha }\mathbf{g}\right) =0,$ with
more special cases when $\overset{\alpha }{\mathbf{D}}$ is completely and
uniquely determined by $\ ^{\alpha }\mathbf{g}$ and $\overset{\alpha }{%
\mathbf{N}}$\textbf{\ }following certain well--defined geometric/physical
principles.

\begin{theorem}
There is a unique canonical fractional d--connection $\ ^{\alpha }\widehat{%
\mathbf{D}}=\{\ ^{\alpha }\widehat{\mathbf{\Gamma }}_{\ \alpha \beta
}^{\gamma }=\left( \ ^{\alpha }\widehat{L}_{jk}^{i},\ ^{\alpha }\widehat{L}%
_{bk}^{a},\ ^{\alpha }\widehat{C}_{jc}^{i},\ ^{\alpha }\widehat{C}%
_{bc}^{a}\right) \}$ which is compatible with the metric structure, $\
^{\alpha }\widehat{\mathbf{D}}\ \left( \ ^{\alpha }\mathbf{g}\right) =0,$
and satisfies the conditions $\ ^{\alpha }\widehat{T}_{\ jk}^{i}=0$ and $\
^{\alpha }\widehat{T}_{\ bc}^{a}=0.$
\end{theorem}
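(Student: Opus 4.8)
The plan is to mirror the classical (integer-dimensional) derivation of the canonical d--connection, exploiting the ``formal analogy'' established above between N--adapted fractional calculus with the left Caputo derivative and the ordinary N--adapted calculus, so that the algebra goes through verbatim with $\overset{\alpha}{\underline{\partial}}$ replacing $\partial$ and fractional N--elongated frames $\ ^{\alpha}\mathbf{e}_{\beta}$ of type (\ref{dder}) replacing the ordinary ones. First I would write the metric compatibility condition $\ ^{\alpha}\widehat{\mathbf{D}}\,(\ ^{\alpha}\mathbf{g})=0$ in N--adapted components, separating it into its h-- and v--pieces using the block-diagonal form (\ref{m1}), namely $\ ^{\alpha}\widehat{D}_k\,\ ^{\alpha}g_{ij}=0$, $\ ^{\alpha}\widehat{D}_c\,\ ^{\alpha}g_{ab}=0$, together with the mixed conditions; these translate into linear equations for the sought coefficients $(\ ^{\alpha}\widehat{L}_{jk}^{i},\ ^{\alpha}\widehat{L}_{bk}^{a},\ ^{\alpha}\widehat{C}_{jc}^{i},\ ^{\alpha}\widehat{C}_{bc}^{a})$ in terms of $\ ^{\alpha}g_{ij}$, $\ ^{\alpha}g_{ab}$ and the frame coefficients.

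Next I would impose the two torsion constraints $\ ^{\alpha}\widehat{T}_{\ jk}^{i}=0$ and $\ ^{\alpha}\widehat{T}_{\ bc}^{a}=0$; by the d--torsion formulas (\ref{dtors}) these say exactly that $\ ^{\alpha}\widehat{L}_{jk}^{i}$ is symmetric in its lower indices and $\ ^{\alpha}\widehat{C}_{bc}^{a}$ is symmetric in $bc$. Combining the h--projection of metric compatibility with the symmetry of $\ ^{\alpha}\widehat{L}_{jk}^{i}$ yields a Christoffel-type cyclic-permutation argument: one writes the three cyclic versions of $\ ^{\alpha}\mathbf{e}_k\,\ ^{\alpha}g_{ij}$, adds two and subtracts the third, and solves to get the closed expression
\begin{equation*}
\ ^{\alpha}\widehat{L}_{jk}^{i}=\tfrac12\,\ ^{\alpha}g^{ir}\left(\ ^{\alpha}\mathbf{e}_k\,\ ^{\alpha}g_{jr}+\ ^{\alpha}\mathbf{e}_j\,\ ^{\alpha}g_{kr}-\ ^{\alpha}\mathbf{e}_r\,\ ^{\alpha}g_{jk}\right),
\end{equation*}
and the analogous formula for $\ ^{\alpha}\widehat{C}_{bc}^{a}$ in terms of $\ ^{\alpha}e_d\,\ ^{\alpha}g_{bc}$. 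The remaining mixed coefficients $\ ^{\alpha}\widehat{L}_{bk}^{a}$ and $\ ^{\alpha}\widehat{C}_{jc}^{i}$ are then read off directly from the mixed components of $\ ^{\alpha}\widehat{\mathbf{D}}\,(\ ^{\alpha}\mathbf{g})=0$ (there is no residual freedom once the h-- and v--parts are fixed); they come out as $\ ^{\alpha}\widehat{L}_{bk}^{a}=\ ^{\alpha}e_b\,\ ^{\alpha}N_k^a+\tfrac12\,\ ^{\alpha}g^{ac}(\ ^{\alpha}\mathbf{e}_k\,\ ^{\alpha}g_{bc}-\ ^{\alpha}g_{dc}\,\ ^{\alpha}e_b\,\ ^{\alpha}N_k^d-\ ^{\alpha}g_{db}\,\ ^{\alpha}e_c\,\ ^{\alpha}N_k^d)$ and $\ ^{\alpha}\widehat{C}_{jc}^{i}=\tfrac12\,\ ^{\alpha}g^{ik}\,\ ^{\alpha}e_c\,\ ^{\alpha}g_{jk}$. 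Exhibiting these explicit formulas simultaneously proves existence and, since at every stage the coefficients were \emph{forced}, uniqueness.

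The step I expect to need the most care is verifying that the cyclic-permutation manipulation is actually legitimate in the fractional setting: the left Caputo derivative is \emph{not} a derivation in the naive sense and the frame operators $\ ^{\alpha}\mathbf{e}_j$ do not obey the ordinary Leibniz rule, so one must check that the specific combinations appearing here (differences and sums of $\ ^{\alpha}\mathbf{e}_k$ acting on $\ ^{\alpha}g_{ij}$, contracted against $\ ^{\alpha}g^{ir}$) still close up correctly, and that the nonholonomy coefficients $\ ^{\alpha}W_{\alpha\beta}^{\gamma}$ from the Proposition after (\ref{ddif}) enter only through the $\ ^{\alpha}\Omega_{ji}^{a}$ terms already accounted for in (\ref{dtors}). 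Here I would lean on the fact, asserted in the text, that the Caputo-based N--adapted calculus ``preserves a number of similarities with an unified covariant calculus'' and that for the left Caputo derivative the vielbein matrices in (\ref{fr}) remain mutually inverse; granting that, the computation is formally identical to the integer case treated in \cite{vrflg,vsgg,ma}, and one simply records the result as the four families of coefficients above.
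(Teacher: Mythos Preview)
Your proposal is correct and follows the same route as the paper, which simply exhibits the explicit coefficient formulas (\ref{candcon}) and asserts that substituting them into (\ref{dtors}) verifies $\ ^{\alpha}\widehat{T}_{\ jk}^{i}=0$, $\ ^{\alpha}\widehat{T}_{\ bc}^{a}=0$ and metric compatibility in component form; you have merely spelled out the standard Christoffel-type derivation that produces those same formulas. Your Leibniz-rule worry, while prudent, is unnecessary here: the cyclic-permutation step is pure linear algebra on the three relations $\ ^{\alpha}\mathbf{e}_k\,\ ^{\alpha}g_{ij}=\ ^{\alpha}\widehat{L}_{ik}^{m}\,\ ^{\alpha}g_{mj}+\ ^{\alpha}\widehat{L}_{jk}^{m}\,\ ^{\alpha}g_{im}$ (and cyclic), and never invokes a product rule for the fractional frame operator $\ ^{\alpha}\mathbf{e}_k$ itself.
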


\begin{proof}
It follows from explicit formulas for coefficients of (\ref{m1}) and
\begin{eqnarray}
\ ^{\alpha }\widehat{L}_{jk}^{i} &=&\frac{1}{2}\ ^{\alpha }g^{ir}\left( \
^{\alpha }\mathbf{e}_{k}\ ^{\alpha }g_{jr}+\ ^{\alpha }\mathbf{e}_{j}\
^{\alpha }g_{kr}-\ ^{\alpha }\mathbf{e}_{r}\ ^{\alpha }g_{jk}\right) ,
\label{candcon} \\
\ ^{\alpha }\widehat{L}_{bk}^{a} &=&\ ^{\alpha }e_{b}(\ ^{\alpha }N_{k}^{a})+
\notag \\
&&\frac{1}{2}\ ^{\alpha }g^{ac}\left( \ ^{\alpha }\mathbf{e}_{k}\ ^{\alpha
}g_{bc}-\ ^{\alpha }g_{dc}\ \ ^{\alpha }e_{b}\ ^{\alpha }N_{k}^{d}-\
^{\alpha }g_{db}\ \ ^{\alpha }e_{c}\ ^{\alpha }N_{k}^{d}\right) ,  \notag \\
\ ^{\alpha }\widehat{C}_{jc}^{i} &=&\frac{1}{2}\ ^{\alpha }g^{ik}\ ^{\alpha
}e_{c}\ ^{\alpha }g_{jk},  \notag \\
\ \ ^{\alpha }\widehat{C}_{bc}^{a} &=&\frac{1}{2}\ ^{\alpha }g^{ad}\left( \
^{\alpha }e_{c}\ ^{\alpha }g_{bd}+\ ^{\alpha }e_{c}\ ^{\alpha }g_{cd}-\
^{\alpha }e_{d}\ ^{\alpha }g_{bc}\right) .  \notag
\end{eqnarray}%
Introducing the values (\ref{candcon}) into formulas (\ref{dtors}) we obtain
that $\widehat{T}_{\ jk}^{i}=0$ and $\widehat{T}_{\ bc}^{a}=0,$ but $%
\widehat{T}_{\ ja}^{i},\widehat{T}_{\ ji}^{a}$ and $\widehat{T}_{\ bi}^{a}$
are not zero, that the metricity conditions are satisfied in component form.
$\square $
\end{proof}

On a fractional nonholonomic $\overset{\alpha }{\mathbf{V}},$ the
Levi--Civita connection $\ ^{\alpha }\nabla =\{\ \ ^{\alpha }\Gamma _{\
\alpha \beta }^{\gamma }\}$ can be defined in standard from by using the
fractional Caputo left derivatives acting correspondingly on the
coefficients of a fractional metric (\ref{fmcf}). Such a geometric object is
not adapted to the N--connection splitting (\ref{whit}). As a consequence of
nonholonomic structure, it is preferred to work on $\overset{\alpha }{%
\mathbf{V}}$ with $\ ^{\alpha }\widehat{\mathbf{D}}=\{\ ^{\alpha }\widehat{%
\mathbf{\Gamma }}_{\ \tau \beta }^{\gamma }\}$ instead of $\ ^{\alpha
}\nabla .$ Even \ $\ ^{\alpha }\widehat{\mathbf{D}}$ has a nontrivial
d--torsion, such an object is very different from a similar one, for
instance, in "integer" Einstein--Cartan gravity when additional
gravitational equations have to be introduces for the nontrivial torsion
components. In our case, the canonical $\ ^{\alpha }\widehat{\mathcal{T}}%
^{\tau }$ (\ref{tors}) is nonholonomically induced, via fractional integral
and derivative operations, in a unique form, by some off--diagonal
coefficients of metric field.

Let us parametrize the coefficients of $\ ^{\alpha }\nabla $ (for integer $%
\alpha ,$ it is uniquely derived from the conditions $~\ _{\shortmid }%
\mathcal{T}=0$ and $\bigtriangledown g=0)$ in the form
\begin{eqnarray*}
\ ^{\alpha }\Gamma _{\beta \gamma }^{\alpha }&=&\left( \ \ ^{\alpha
}L_{jk}^{i},\ \ ^{\alpha }L_{jk}^{a},\ \ ^{\alpha }L_{bk}^{i},\ \ ^{\alpha
}L_{bk}^{a},\ \ ^{\alpha }C_{jb}^{i},\ \ ^{\alpha }C_{jb}^{a},\ \ ^{\alpha
}C_{bc}^{i},\ \ ^{\alpha }C_{bc}^{a}\right) , \\
\mbox{ where } 
 && \ ^{\alpha }
 \bigtriangledown _{\ ^{\alpha }\mathbf{e}_{k}}(\ ^{\alpha }%
\mathbf{e}_{j}) = \ ^{\alpha }L_{jk}^{i}\ ^{\alpha }\mathbf{e}_{i}+\ ^{\alpha }L_{jk}^{a}\ ^{\alpha }e_{a}, \\
 &&\ ^{\alpha }\bigtriangledown _{\ ^{\alpha }\mathbf{e}_{k}}(\ ^{\alpha
}e_{b}) =\ ^{\alpha }L_{bk}^{i}\ ^{\alpha }\mathbf{e}_{i}+\ ^{\alpha
}L_{bk}^{a}\ ^{\alpha }e_{a}, \\
&&\ ^{\alpha }\bigtriangledown _{\ ^{\alpha }e_{b}}(\ ^{\alpha }\mathbf{e}%
_{j}) =\ \ ^{\alpha }C_{jb}^{i}\ ^{\alpha }\mathbf{e}_{i}+\ \ ^{\alpha
}C_{jb}^{a}\ ^{\alpha }e_{a}, \\
 &&\ ^{\alpha }\bigtriangledown _{\ ^{\alpha }e_{c}}(\ ^{\alpha }e_{b}) =\
\ ^{\alpha }C_{bc}^{i}\ ^{\alpha }\mathbf{e}_{i}+\ ^{\alpha }C_{bc}^{a}\
^{\alpha }e_{a}.
\end{eqnarray*}%
Following a straightforward fractional coefficient computation, we can prove

\begin{corollary}
With respect to N--adapted fractional bases (\ref{dder}) and (\ref{ddif}),
the coefficients of the fractional Levi--Civita and canonical d--connection
satisfy the distorting relations
\begin{equation}
\ \ \ ^{\alpha }\Gamma _{\ \alpha \beta }^{\gamma }=\ \ ^{\alpha }\widehat{%
\mathbf{\Gamma }}_{\ \alpha \beta }^{\gamma }+\ \ \ ^{\alpha }Z_{\ \alpha
\beta }^{\gamma }  \label{cdeft}
\end{equation}%
where the explicit components of distortion tensor $\ _{\shortmid }Z_{\
\alpha \beta }^{\gamma }$ \ are computed%
\begin{eqnarray}
\ \ \ ^{\alpha }Z_{jk}^{i} &=&0,\ \ \ ^{\alpha }Z_{jk}^{a}=-\ \ ^{\alpha
}C_{jb}^{i}\ \ ^{\alpha }g_{ik}\ \ ^{\alpha }g^{ab}-\frac{1}{2}\ \ ^{\alpha
}\Omega _{jk}^{a},  \notag \\
\ \ \ ^{\alpha }Z_{bk}^{i} &=&\frac{1}{2}\ \ ^{\alpha }\Omega _{jk}^{c}\ \
^{\alpha }g_{cb}\ \ ^{\alpha }g^{ji}-\frac{1}{2}(\delta _{j}^{i}\delta
_{k}^{h}-\ \ ^{\alpha }g_{jk}\ \ ^{\alpha }g^{ih})\ \ ^{\alpha }C_{hb}^{j},
\notag \\
\ \ \ ^{\alpha }Z_{bk}^{a} &=&\frac{1}{2}(\delta _{c}^{a}\delta _{d}^{b}+\ \
^{\alpha }g_{cd}\ \ ^{\alpha }g^{ab})\left[ \ \ ^{\alpha }L_{bk}^{c}-\ \
^{\alpha }e_{b}(\ \ ^{\alpha }N_{k}^{c})\right] ,  \notag \\
\ \ \ ^{\alpha }Z_{kb}^{i} &=&\frac{1}{2}\ \ ^{\alpha }\Omega _{jk}^{a}\ \
^{\alpha }g_{cb}\ \ ^{\alpha }g^{ji}+\frac{1}{2}(\delta _{j}^{i}\delta
_{k}^{h}-\ \ ^{\alpha }g_{jk}\ \ ^{\alpha }g^{ih})\ \ ^{\alpha }C_{hb}^{j},
\notag \\
\ \ \ ^{\alpha }Z_{jb}^{a} &=&-\frac{1}{2}(\delta _{c}^{a}\delta _{b}^{d}-\
\ ^{\alpha }g_{cb}\ \ ^{\alpha }g^{ad})\left[ \ \ ^{\alpha }L_{dj}^{c}-\ \
^{\alpha }e_{d}(\ \ ^{\alpha }N_{j}^{c})\right] ,\   \label{cdeftc} \\
\ \ ^{\alpha }Z_{bc}^{a} &=&0,  \notag \\
\ \ \ ^{\alpha }Z_{ab}^{i} &=&-\frac{\ \ ^{\alpha }g^{ij}}{2}\{\left[ \ \
^{\alpha }L_{aj}^{c}-\ \ ^{\alpha }e_{a}(\ \ ^{\alpha }N_{j}^{c})\right] \ \
^{\alpha }g_{cb}  \notag \\
&&+\left[ \ \ ^{\alpha }L_{bj}^{c}-\ \ ^{\alpha }e_{b}(\ \ ^{\alpha
}N_{j}^{c})\right] \ \ ^{\alpha }g_{ca}\}.  \notag
\end{eqnarray}
\end{corollary}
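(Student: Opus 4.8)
The plan is to carry out a direct N--adapted coefficient computation, exactly parallel to the integer--dimensional case treated in \cite{ijgmmp,vrflg,vsgg,ma}, exploiting the formal analogy between the fractional Caputo calculus and the ordinary differential calculus that was established in the preceding Propositions and Theorems. First I would write down the defining relations of the fractional Levi--Civita connection $\ ^{\alpha }\nabla $ with respect to the N--adapted bases (\ref{dder}) and (\ref{ddif}): metric compatibility $\ ^{\alpha }\nabla (\ ^{\alpha }\mathbf{g})=0$ together with vanishing torsion, expressed through the Koszul--type formula applied to the frame fields $\ ^{\alpha }\mathbf{e}_{\beta }=[\ ^{\alpha }\mathbf{e}_{j},\ ^{\alpha }e_{b}]$. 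Because these frames are nonholonomic, the Koszul formula picks up the nonholonomy coefficients $\ ^{\alpha }W_{\alpha \beta }^{\gamma }=\{\ ^{\alpha }\Omega _{ji}^{a},\ \overset{\alpha }{\underline{\partial }}_{b}\ ^{\alpha }N_{i}^{a}\}$ computed in the proof of the Proposition on N--adapted frames; substituting the parametrization $\ ^{\alpha }\Gamma _{\beta \gamma }^{\alpha }=(\ ^{\alpha }L_{jk}^{i},\ ^{\alpha }L_{jk}^{a},\ ^{\alpha }L_{bk}^{i},\ ^{\alpha }L_{bk}^{a},\ ^{\alpha }C_{jb}^{i},\ ^{\alpha }C_{jb}^{a},\ ^{\alpha }C_{bc}^{i},\ ^{\alpha }C_{bc}^{a})$ yields each of these eight families of coefficients in terms of $\ ^{\alpha }g_{ij},\ ^{\alpha }g_{ab},\ ^{\alpha }N_{i}^{a}$ and the Caputo derivatives $\ ^{\alpha }\mathbf{e}_{k},\ ^{\alpha }e_{b}$ acting on them.

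Next I would take the already--established closed formulas (\ref{candcon}) for the canonical d--connection $\ ^{\alpha }\widehat{\mathbf{D}}$ and form the differences $\ ^{\alpha }Z_{\ \alpha \beta }^{\gamma }:=\ ^{\alpha }\Gamma _{\ \alpha \beta }^{\gamma }-\ ^{\alpha }\widehat{\mathbf{\Gamma }}_{\ \alpha \beta }^{\gamma }$ component by component. Since $\ ^{\alpha }\widehat{\mathbf{D}}$ is constructed so that $\ ^{\alpha }\widehat{L}_{jk}^{i}$ and $\ ^{\alpha }\widehat{C}_{bc}^{a}$ coincide with the corresponding purely--$h$ and purely--$v$ Christoffel--like expressions, the $h$--$h$ and $v$--$v$ blocks cancel and one reads off $\ ^{\alpha }Z_{jk}^{i}=0$ and $\ ^{\alpha }Z_{bc}^{a}=0$ immediately. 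For the mixed blocks one must keep careful track of the curvature $\ ^{\alpha }\Omega _{ji}^{a}$ of the N--connection and of the combinations $\ ^{\alpha }L_{bk}^{c}-\ ^{\alpha }e_{b}(\ ^{\alpha }N_{k}^{c})$, which measure the failure of the off--diagonal data to be ''integrable''; grouping the terms with the projectors $\tfrac12(\delta _{c}^{a}\delta _{d}^{b}\pm\ ^{\alpha }g_{cd}\ ^{\alpha }g^{ab})$ and $\tfrac12(\delta _{j}^{i}\delta _{k}^{h}-\ ^{\alpha }g_{jk}\ ^{\alpha }g^{ih})$ reproduces exactly the expressions (\ref{cdeftc}). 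The key observation making all of this legitimate in the fractional setting is that the Caputo derivative obeys the same Leibniz rule and the same vielbein inversion property (noted after (\ref{fr})) as the ordinary partial derivative, so every algebraic manipulation valid for integer $\alpha $ transfers verbatim.

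The main obstacle, and the place where genuine care rather than routine bookkeeping is needed, is the handling of the nonholonomy coefficients $\ ^{\alpha }W_{ib}^{a}=\overset{\alpha }{\underline{\partial }}_{b}\ ^{\alpha }N_{i}^{a}$ inside the Koszul formula: unlike the $h$--$h$ sector, these do not drop out, and one must verify that the Caputo derivative of the N--connection coefficients entering $\ ^{\alpha }\nabla $ combines correctly with the terms $\ ^{\alpha }e_{b}(\ ^{\alpha }N_{k}^{a})$ that already appear in $\ ^{\alpha }\widehat{L}_{bk}^{a}$, so that the distortion is expressed solely through $\ ^{\alpha }\Omega _{jk}^{a}$, $\ ^{\alpha }C_{hb}^{j}$, and the brackets $\ ^{\alpha }L_{dj}^{c}-\ ^{\alpha }e_{d}(\ ^{\alpha }N_{j}^{c})$ and not through any uncontrolled fractional--derivative remainder. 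Once this is checked in the $Z_{bk}^{a}$ and $Z_{ab}^{i}$ blocks (the two most involved ones), the remaining components $\ ^{\alpha }Z_{jk}^{a},\ ^{\alpha }Z_{bk}^{i},\ ^{\alpha }Z_{kb}^{i},\ ^{\alpha }Z_{jb}^{a}$ follow by the same pattern of symmetrization/antisymmetrization, completing the proof of the distorting relation (\ref{cdeft}).
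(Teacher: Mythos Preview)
Your overall strategy---write out the Levi--Civita coefficients in the N--adapted frame via a Koszul-type formula, subtract the canonical d--connection coefficients (\ref{candcon}), and read off the distortion block by block---is exactly the ``straightforward fractional coefficient computation'' the paper invokes, and your identification of $\ ^{\alpha}Z^{i}_{jk}=0$, $\ ^{\alpha}Z^{a}_{bc}=0$ and of the mixed blocks organized by the projectors $\tfrac12(\delta^a_c\delta^b_d\pm\ ^{\alpha}g_{cd}\ ^{\alpha}g^{ab})$ etc.\ is correct.

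There is, however, a genuine error in your justification of why the integer-dimensional computation ``transfers verbatim.'' You assert that the Caputo derivative obeys the same Leibniz rule as the ordinary partial derivative. This is false, and the paper says so explicitly in the Appendix: for analytic $\ ^{1}f,\ ^{2}f$ one has $\overset{\alpha}{\partial}_x(\,^{1}f\, ^{2}f)\neq(\overset{\alpha}{\partial}_x\,^{1}f)\,^{2}f+(\overset{\alpha}{\partial}_x\,^{2}f)\,^{1}f$. The legitimacy of the fractional computation rests on a different point, the one emphasized in Section~\ref{ss12} and after (\ref{cdeftc}): the left Caputo derivative annihilates constants, so one can \emph{define} the fractional Levi--Civita connection and the canonical d--connection by the same formal coefficient expressions (Koszul formula in N--adapted frames, respectively (\ref{candcon})) with $\partial$ replaced by $\overset{\alpha}{\underline{\partial}}$, and then the subtraction $\ ^{\alpha}\Gamma-\ ^{\alpha}\widehat{\mathbf{\Gamma}}$ is a purely algebraic rearrangement of those already-defined coefficients---no product rule is invoked at that stage. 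In other words, the distortion formulas (\ref{cdeftc}) are identities between coefficient expressions, not consequences of differentiating products. You should replace the Leibniz-rule claim with this formal-analogy argument; as written, your stated ``key observation'' would not survive scrutiny.
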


We emphasize that there are not simple relations of type  (\ref{cdeft}) and (%
\ref{cdeftc}) if the fractional integro--differential structure would be not
elaborated in N--adapted form for the left Caputo derivative. For the
fractional RL derivatives, it is not possible to introduce N--anholonomic
distributions when the formulas would preserve a maximal similarity with the
integer nonholonomic case.

\section{Perelman Type Fractional Functionals}

\label{ssfgpf} The goal of this section is to show that there is a
fractional integro--differential calculus admitting generalizations of the
Hamilton--Perelman Ricci flow evolution theory. Proofs are simplified for
correspondingly defined nonholonomic fractional distributions.

\subsection{On (non) holonomic Ricci flows}

For Riemannian spaces of integer dimension, the Grisha Perelman's
fundamental idea was to prove that the Ricci flow is not only a gradient
flow but, introducing two Lyapunov type functionals, can be defined also\ as
a dynamical system on the spaces of Riemannian metrics.

The Ricci flow equation was postulated by R. Hamilton \cite{ham1} as an
evolution equation\footnote{%
for our further purposes, it is convenient to use a different system of
denotations than those considered by R. Hamilton or Grisha Perelman on
holonomic Riemannian spaces of integer dimensions}
\begin{equation}
\frac{\partial g_{\alpha \beta }(\chi )}{\partial \chi }=-2\ R_{\alpha \beta
}(\chi )  \label{heq1}
\end{equation}%
for a set of Riemannian metrics $g_{\alpha \beta }(\chi )$ and corresponding
Ricci tensors $\ R_{\alpha \beta }(\chi ),$ derived for corresponding set of
Levi--Civita connections $\nabla (\chi ),$ all parametrized by a real
parameter $\chi .$

The Perelman's functionals were introduced for Ricci flows of Riemannian
metrics. For the Levi--Civita- connection $\nabla $ defined by a metric $%
\mathbf{g,}$ such fundamental functionals are written in the form
\begin{eqnarray}
\ _{\shortmid }\mathcal{F}(f) &=&\int\limits_{\mathbf{V}}\left( \
_{\shortmid }R+\left| \nabla f\right| ^{2}\right) e^{-f}\ dV,  \label{pfrs}
\\
\ _{\shortmid }\mathcal{W}(f,\tau ) &=&\int\limits_{\mathbf{V}}\left[ \tau
\left( \ _{\shortmid }R+\left| \nabla f\right| \right) ^{2}+f-\frac{n+m}{2}%
\right] \mu \ dV,  \notag
\end{eqnarray}%
where $dV$ is the volume form of $\ \mathbf{g,}$ integration is taken over
compact $\mathbf{V}$ and $\ _{\shortmid }R$ is the scalar curvature computed
for $\nabla .$ For a flow parameter $\tau >0,$ we have $\int\nolimits_{%
\mathbf{V}}\mu dV=1$ when $\ \mu =\left( 4\pi \tau \right)
^{-(n+m)/2}e^{-f}. $

In our works \cite{vnhrf1,vnhrf2,vnhrf3,nhrfs1,nhrfs2,nhrfs3}, we proved
that nonholonomic Ricci flows of the Lagrange--Finsler geometries and
various generalizations with nonsymmetric metics, noncommutative structures
etc, can be modelled as constrained structures on N--anholonomic Riemannian
spaces. The main conclusion was that following a N--adapted formalism, the
Ricci flow theory can be extended for non--Riemannian geometries.

\subsection{Fractional functionals for nonholonomic Ricci flows}

The functional approach can be redefined for N--anholonomic manifolds, for
our purposes, modeled as fractional spaces $\overset{\alpha }{\mathbf{V}}.$
Fractional flows are considered with fractional derivative on parameters. In
N--adapted form, we follow the methods from \cite{vnhrf1,vnhrf2} extended
for fractional derivatives.

\begin{claim}
For fractional nonholonomic geometries defined by the canonical
d--connection $\ ^{\alpha }\widehat{\mathbf{D}},$ the fractional versions of
Perelman's functionals are%
\begin{eqnarray}
\ ^{\alpha }\widehat{\mathcal{F}}(\ ^{\alpha }\mathbf{g},\overset{\alpha }{%
\mathbf{N}},\ ^{\alpha }\widehat{f}) &=&\int\limits_{\ ^{\alpha }\mathbf{V}%
}(\ ^{\alpha }R+\ ^{\alpha }S+|\ ^{\alpha }\widehat{\mathbf{D}}\ ^{\alpha }%
\widehat{f}|^{2})e^{-\ ^{\alpha }\widehat{f}}\ ^{\alpha }d\ ^{\alpha }V,
\label{npf1} \\
\ ^{\alpha }\widehat{\mathcal{W}}(\ ^{\alpha }\mathbf{g,}\ \overset{\alpha }{%
\mathbf{N}},\ ^{\alpha }\widehat{f},\ ^{\alpha }\tau ) &=&\int\limits_{\
^{\alpha }\mathbf{V}}[\ ^{\alpha }\widehat{\tau }(\ ^{\alpha }R+\ ^{\alpha
}S+|\ _{h}^{\alpha }D\ ^{\alpha }\widehat{f}|+|\ _{v}^{\alpha }D\ ^{\alpha }%
\widehat{f}|)^{2}  \notag \\
&&+\ ^{\alpha }\widehat{f}-\frac{n+m}{2}]\ ^{\alpha }\widehat{\mu }\
^{\alpha }d\ ^{\alpha }V,  \label{npf2}
\end{eqnarray}%
where $\ ^{\alpha }d\ ^{\alpha }V$ is the volume fractional form of $\ \
^{\alpha }\mathbf{g}$ (\ref{m1}), $\ ^{\alpha }R$ and $\ ^{\alpha }S$ \ are
respectively the h- and v--components of the curvature scalar (\ref{sdccurv}%
) of $\ ^{\alpha }\widehat{\mathbf{D}},$ for $\ ^{\alpha }\widehat{\mathbf{D}%
}_{\beta }=(\ ^{\alpha }D_{i},\ ^{\alpha }D_{a}),$ or $\ \ ^{\alpha }%
\widehat{\mathbf{D}}=(\ ^{h}D,\ ^{v}D),$ $\left| \ ^{\alpha }\widehat{%
\mathbf{D}}\ ^{\alpha }\widehat{f}\right| ^{2}=\left| \ _{h}^{\alpha }D\
^{\alpha }\widehat{f}\right| ^{2}+\left| \ _{v}^{\alpha }D\ ^{\alpha }%
\widehat{f}\right| ^{2},$ and $\ ^{\alpha }\widehat{f}$ satisfies $%
\int\nolimits_{\ ^{\alpha }\mathbf{V}}\ ^{\alpha }\widehat{\mu }\ ^{\alpha
}d\ ^{\alpha }V=1$ \ for $\ ^{\alpha }\widehat{\mu }=\left( 4\pi \ \tau
\right) ^{-(n+m)/2}e^{-\ ^{\alpha }\widehat{f}}$ and fractional flow
parameter $\ \tau >0.$
\end{claim}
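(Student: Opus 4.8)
The plan is to obtain (\ref{npf1}) and (\ref{npf2}) by transporting, term by term, the integer--dimensional N--adapted form of Perelman's functionals \cite{vnhrf1,vnhrf2} to the fractional Caputo setting, and then to verify the two consistency requirements that pin the formulas down uniquely: reduction to (\ref{pfrs}) in the holonomic integer limit, and well--definedness (frame-- and representative--independence) under the fractional integro--differential calculus. First I would recall that on a nonholonomic (pseudo) Riemannian space of integer dimension the functionals (\ref{pfrs}), originally written with the Levi--Civita connection $\nabla$, admit an equivalent rewriting with respect to the canonical d--connection: using the h--v splitting of the scalar curvature $\ _{s}\mathbf{R}=R+S$ (the integer case of (\ref{sdccurv})), the splitting $|\nabla f|^{2}=|\ _{h}D f|^{2}+|\ _{v}D f|^{2}$, and the distortion relation between $\nabla$ and $\widehat{\mathbf{D}}$ (the integer analog of (\ref{cdeft})--(\ref{cdeftc})), one sees that the two integrands differ only by an N--adapted total divergence, which drops out upon integration over compact $\mathbf{V}$ by the Stokes/Green theorem. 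This produces the integer prototypes of the right--hand sides of (\ref{npf1}) and (\ref{npf2}).

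Next I would run the same construction over $\overset{\alpha }{\mathbf{V}}$, replacing each integer object by its Caputo--fractional counterpart dictated by the formal analogy of Section \ref{ss12}, item 4: the partial derivatives by the left Caputo derivatives (\ref{lfcd}), the N--adapted (co)frames by (\ref{dder})--(\ref{ddif}), the Ricci and scalar curvatures by (\ref{dricci}) and (\ref{sdccurv}) for $\ ^{\alpha }\widehat{\mathbf{D}}$, the gradient norm by $|\ ^{\alpha }\widehat{\mathbf{D}}\ ^{\alpha }\widehat{f}|^{2}=|\ _{h}^{\alpha }D\ ^{\alpha }\widehat{f}|^{2}+|\ _{v}^{\alpha }D\ ^{\alpha }\widehat{f}|^{2}$, and the Riemannian volume form by the fractional volume form $\ ^{\alpha }d\ ^{\alpha }V$ built from the d--metric (\ref{m1}) and the fractional integration rules (\ref{aux01}). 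Because the Caputo derivative annihilates constants and the vielbeins (\ref{fr}) are mutual inverses exactly as in the integer case, all the intervening algebra --- the h--v decompositions, the absorption of the $\ ^{\alpha }\nabla \to \ ^{\alpha }\widehat{\mathbf{D}}$ distortion into a divergence, and the normalization $\int_{\ ^{\alpha }\mathbf{V}}\ ^{\alpha }\widehat{\mu }\ ^{\alpha }d\ ^{\alpha }V=1$ with $\ ^{\alpha }\widehat{\mu }=(4\pi \ \tau )^{-(n+m)/2}e^{-\ ^{\alpha }\widehat{f}}$ --- carries over verbatim. Setting $\alpha \to 1$ and $\overset{\alpha }{\mathbf{N}}=0$ collapses (\ref{npf1})--(\ref{npf2}) to (\ref{pfrs}), which fixes all numerical coefficients and the relative weights of the h-- and v--terms.

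The one genuinely new ingredient, and the step I expect to be the main obstacle, is justifying that the fractional analogs of the Gauss/Stokes/Green theorems hold for the Caputo calculus in N--adapted form, so that the N--adapted total--divergence terms arising both in the $\ ^{\alpha }\nabla \to \ ^{\alpha }\widehat{\mathbf{D}}$ reduction and (subsequently) in the variational computations integrate to zero over compact $\overset{\alpha }{\mathbf{V}}$. This is exactly where the choice of the left Caputo derivative (rather than the Riemann--Liouville one) is essential: only for it does the mixed nonholonomic integro--differential calculus of the Appendix yield a self--consistent fractional Stokes formula, hence a functional $\ ^{\alpha }\widehat{\mathcal{F}}$ that is independent of the representative $\ ^{\alpha }\widehat{f}$ modulo such divergences and invariant under N--adapted frame changes. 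Granting these fractional integral theorems, the Claim follows. I would thus organize the argument as: (i) recall the integer N--adapted form of (\ref{pfrs}); (ii) invoke the fractional Stokes/Green theorems of the Appendix; (iii) substitute the Caputo operators, the N--adapted (co)frames, and the fractional volume form; (iv) verify the $\alpha \to 1$, holonomic limit together with the normalization $\int \ ^{\alpha }\widehat{\mu }\ ^{\alpha }d\ ^{\alpha }V=1$.
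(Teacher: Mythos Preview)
Your outline and the paper's are close in spirit: both transport the integer N--adapted functionals into the Caputo--fractional setting and lean on the fractional integral theorems of the Appendix, and both check the $\alpha\to 1$ holonomic limit against (\ref{pfrs}). The one substantive difference is in how the passage $\ ^{\alpha}\nabla \to \ ^{\alpha}\widehat{\mathbf{D}}$ is justified. You keep the same scaling function and assert that the Levi--Civita and canonical d--connection integrands differ by an N--adapted total divergence, which is then killed by fractional Stokes. The paper does not claim this; instead it allows a \emph{redefinition} $\ ^{\alpha}f \to \ ^{\alpha}\widehat{f}$ together with a rescaling $\tau \to \widehat{\tau}$, and asserts only that the residual terms $\ ^{\alpha}\Phi,\ ^{\alpha}\Phi_{1}$ have vanishing fractional integral over $\overset{\alpha}{\mathbf{V}}$.

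This is not merely cosmetic. The difference $\ _{\shortmid}^{\alpha}R-(\ ^{\alpha}R+\ ^{\alpha}S)$ carries, via (\ref{cdeft})--(\ref{cdeftc}), terms quadratic in the distortion $\ ^{\alpha}Z^{\gamma}_{\ \alpha\beta}$ which are not exact in general, so with a fixed $f$ your ``pure divergence'' claim does not hold as stated. The paper's extra freedom in choosing $\ ^{\alpha}\widehat{f}$ (and $\widehat{\tau}$ for $\ ^{\alpha}\widehat{\mathcal{W}}$) is precisely what absorbs those non--exact pieces, and it is also why the functionals in (\ref{npf1})--(\ref{npf2}) are written with $\ ^{\alpha}\widehat{f}$ rather than with the original $\ ^{\alpha}f$. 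Your steps (i)--(iv) are otherwise the right skeleton; replace ``differ by a total divergence'' with ``differ, after a suitable choice of $\ ^{\alpha}\widehat{f}$ and $\widehat{\tau}$, by terms with vanishing fractional integral'' and you recover the paper's argument.
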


\begin{proof}
Formulas (\ref{pfrs}) can be rewritten for some fractional functions $\
^{\alpha }\widehat{f}$ and $\ ^{\alpha }f$ when
\begin{eqnarray*}
(\ _{\shortmid }^{\alpha }R+\left| \ ^{\alpha }\nabla \ ^{\alpha }f\right|
^{2}) e^{-\ ^{\alpha }f} &=&(\ ^{\alpha }R+\ ^{\alpha }S+\left| \
_{h}^{\alpha }D\ ^{\alpha }\widehat{f}\right| ^{2}+\left| \ _{v}^{\alpha }D\
^{\alpha }\widehat{f}\right| ^{2}) e^{-\ ^{\alpha }\widehat{f}}\  \\
&&+\ ^{\alpha }\Phi
\end{eqnarray*}%
for a re--scaling of fractional parameter $\ \tau \rightarrow \ \widehat{%
\tau }$ with
\begin{eqnarray*}
&&\left[ \ ^{\alpha }\tau \left( \ _{\shortmid }^{\alpha }R+\left| \
^{\alpha }\nabla \ ^{\alpha }f\right| \right) ^{2}+\ ^{\alpha }f-\frac{n+m}{2%
})\right] \ ^{\alpha }\mu = \\
&&\left[ \ ^{\alpha }\widehat{\tau }\left( \ ^{\alpha }R+\ ^{\alpha
}S+\left| \ _{h}^{\alpha }D\widehat{f}\right| +\left| \ _{v}^{\alpha }D%
\widehat{f}\right| \right) ^{2}+\ ^{\alpha }\widehat{f}-\frac{n+m}{2}\right]
\ ^{\alpha }\widehat{\mu }+\ ^{\alpha }\Phi _{1},
\end{eqnarray*}%
for some $\ ^{\alpha }\Phi $ and $\ ^{\alpha }\Phi _{1}$ for which $%
\int\limits_{\ ^{\alpha }\mathbf{V}}\ ^{\alpha }\Phi \ ^{\alpha }d\ ^{\alpha
}V=0$ and $\int\limits_{\ ^{\alpha }\mathbf{V}}\ ^{\alpha }\Phi _{1}\
^{\alpha }d\ ^{\alpha }V=0.$ $\square $
\end{proof}

For proofs of the Main Results in section \ref{sfhe}, the next lemma will be
important.

\begin{lemma}
\label{lem1}The first N--adapted fractional variations of (\ref{npf1}) are
given by
\begin{eqnarray}
&&\delta \ ^{\alpha }\widehat{\mathcal{F}}(v_{ij},v_{ab},\ ^{h}f,\ ^{v}f)=
\label{vnpf1} \\
&&\int\limits_{\mathbf{V}}\{[-v_{ij}(\ ^{\alpha }R_{ij}+\ ^{\alpha }D_{i}\
^{\alpha }D_{j}\ ^{\alpha }\widehat{f})+(\frac{\ ^{h}v}{2}-\ ^{h}f)(2\
^{h}\Delta \widehat{f}-|\ ^{h}D\ \widehat{f}|)+\ ^{\alpha }R]  \notag \\
&&+[-v_{ab}(\ ^{\alpha }R_{ab}+\ ^{\alpha }D_{a}\ ^{\alpha }D_{b}\ ^{\alpha }%
\widehat{f})+(\frac{\ ^{v}v}{2}-\ ^{v}f)\left( 2\ ^{v}\Delta \widehat{f}-|\
^{v}D\ \widehat{f}|\right)  \notag \\
&&+\ ^{\alpha }S]\}e^{-\ ^{\alpha }\widehat{f}}\ ^{\alpha }d\ ^{\alpha }V,
\notag
\end{eqnarray}%
where $^{h}\Delta =\ ^{\alpha }D_{i}\ ^{\alpha }D^{i}$ and $\ ^{v}\Delta =\
^{\alpha }D_{a}\ ^{\alpha }D^{a},\widehat{\Delta }=$ $\ ^{h}\Delta +\
^{v}\Delta ,$ and $\ ^{h}v=\ ^{\alpha }g^{ij}v_{ij},\ ^{v}v=\ ^{\alpha
}g^{ab}v_{ab};$ \ for h--variation $\ ^{h}\delta \ ^{\alpha }g_{ij}=v_{ij},$
v-variation $\ ^{v}\delta \ ^{\alpha }g_{ab}=v_{ab}$ and variations $\
^{h}\delta \ ^{\alpha }\widehat{f}=\ ^{h}f,$ $^{v}\delta \ ^{\alpha }%
\widehat{f}=\ ^{v}f.$
\end{lemma}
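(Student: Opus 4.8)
The plan is to mimic, in $N$--adapted fractional form, Perelman's computation of the first variation of his $\mathcal{F}$--functional, splitting everything into $h$-- and $v$--pieces according to the decomposition~(\ref{whit}). First I would set up the variations $\ ^{h}\delta\ ^{\alpha}g_{ij}=v_{ij}$, $\ ^{v}\delta\ ^{\alpha}g_{ab}=v_{ab}$, $\ ^{h}\delta\ ^{\alpha}\widehat f=\ ^{h}f$, $\ ^{v}\delta\ ^{\alpha}\widehat f=\ ^{v}f$, and record the induced variation of the fractional volume form: since $\ ^{\alpha}d\ ^{\alpha}V$ is built from $\sqrt{|\ ^{\alpha}\mathbf g|}$ in the N--adapted coframe~(\ref{ddif}), one gets $\delta(\ ^{\alpha}d\ ^{\alpha}V)=\tfrac12(\ ^{h}v+\ ^{v}v)\ ^{\alpha}d\ ^{\alpha}V$ with $\ ^{h}v=\ ^{\alpha}g^{ij}v_{ij}$, $\ ^{v}v=\ ^{\alpha}g^{ab}v_{ab}$. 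Then I would compute $\delta(\ ^{\alpha}R)$ and $\delta(\ ^{\alpha}S)$ separately from the d--curvature formulas~(\ref{dcurv}) contracted as in~(\ref{dricci})--(\ref{sdccurv}); the standard "$\delta R = -R_{ij}v^{ij} + \nabla_i\nabla_j v^{ij} - \Delta(g^{ij}v_{ij})$" identity goes through for the canonical d--connection $\ ^{\alpha}\widehat{\mathbf D}$ because it is metric compatible, with $\ ^{\alpha}D_i$ and $\ ^{\alpha}D_a$ playing the roles of the $h$-- and $v$--covariant derivatives and $\ ^{h}\Delta$, $\ ^{v}\Delta$ the corresponding Laplacians.

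Next I would vary the gradient term $|\ ^{\alpha}\widehat{\mathbf D}\ ^{\alpha}\widehat f|^{2}=|\ _{h}^{\alpha}D\ ^{\alpha}\widehat f|^{2}+|\ _{v}^{\alpha}D\ ^{\alpha}\widehat f|^{2}$, picking up the metric variations in $\ ^{\alpha}g^{ij}$, $\ ^{\alpha}g^{ab}$ together with the $2\ ^{\alpha}D^i\ ^{\alpha}\widehat f\,\ ^{\alpha}D_i(\ ^{h}f)$ and analogous $v$--terms, and vary the weight $e^{-\ ^{\alpha}\widehat f}$ producing the overall factor $-(\ ^{h}f+\ ^{v}f)$. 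Assembling the three contributions and then applying the fractional $N$--adapted integration by parts — this is where the Caputo choice and the simplified fractional Stokes'/Green's theorems alluded to in the introduction are used — I would move all derivatives off $v_{ij}$, $v_{ab}$, $\ ^{h}f$, $\ ^{v}f$. The terms $\ ^{\alpha}D_i\ ^{\alpha}D_j v^{ij}$ and $-\ ^{h}\Delta(\ ^{h}v)$ combine after integration against $e^{-\ ^{\alpha}\widehat f}$ with the $\ ^{\alpha}D_i\ ^{\alpha}\widehat f$ factors to regroup into $-v_{ij}(\ ^{\alpha}R_{ij}+\ ^{\alpha}D_i\ ^{\alpha}D_j\ ^{\alpha}\widehat f)$ plus the scalar pieces $(\tfrac{\ ^{h}v}{2}-\ ^{h}f)(2\ ^{h}\Delta\widehat f-|\ ^{h}D\widehat f|)$ and $+\ ^{\alpha}R$, with the identical story in the $v$--sector; this yields~(\ref{vnpf1}).

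The main obstacle, and the only place genuinely new relative to the integer computation, is the integration-by-parts step: one must check that the fractional $N$--adapted divergence theorem holds in the form needed, i.e. that $\int_{\ ^{\alpha}\mathbf V}\ ^{\alpha}\widehat{\mathbf D}_\beta(\,\cdot\,)\,e^{-\ ^{\alpha}\widehat f}\ ^{\alpha}d\ ^{\alpha}V$ of a total $h$-- or $v$--divergence reduces to the expected boundaryless expression (on compact $\ ^{\alpha}\mathbf V$), and that the nonholonomy coefficients $\ ^{\alpha}W^{\gamma}_{\alpha\beta}$ from the Proposition following~(\ref{fnccoef}) do not spoil this. Since $\ ^{\alpha}\widehat{\mathbf D}$ has nonvanishing d--torsion components $\ ^{\alpha}\widehat T^i_{\ ja}$, $\ ^{\alpha}\widehat T^a_{\ ji}$, $\ ^{\alpha}\widehat T^a_{\ bi}$, one has to verify these torsion terms either cancel in the relevant contractions or are absorbed into the stated $h$/$v$ Ricci and Hessian terms; this is exactly the N--adapted bookkeeping done in the integer case in~\cite{vnhrf1,vnhrf2}, and the abstract fractional analogy described in point~4 of section~\ref{ss12} lets it be transcribed with the Caputo operators $\ _{\ _{1}x}\overset{\alpha}{\underline{\partial}}_i$ in place of $\partial_i$. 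Everything else is the routine variational calculus indicated above, so I would only sketch it and refer to the integer-dimensional N--adapted proofs for the details.
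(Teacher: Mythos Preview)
Your proposal is correct and follows essentially the same approach as the paper: fix the N--connection structure, run Perelman's first--variation computation for the $\mathcal{F}$--functional in $N$--adapted fractional form with the $h$/$v$ split, and defer the details to the integer--dimensional analogue (Lemma~1.5.2 in \cite{caozhu}, and \cite{vnhrf1,vnhrf2}). In fact you give more explicit detail on the variation of the volume form, the scalar curvatures, and the integration--by--parts / torsion bookkeeping than the paper's own proof, which is only a brief sketch pointing to those references.
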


\begin{proof}
We fix a N--connection structure $\ \overset{\alpha }{\mathbf{N}}$ for a
fractional metric $\ ^{\alpha }\mathbf{g}$ (\ref{m1}). Then we follow a
N--adapted fractional calculus similar to that for Perelman's Lemma in \cite%
{gper1}. We omit details given, for instance, in the proof for integer
configurations in \cite{caozhu}, see there Lemma 1.5.2. $\square $
\end{proof}

\section{Fractional Hamilton's Evolution Equations}

\label{sfhe}

In this section, we formulate the main results of this paper, on fractional
Ricci flow theory: we sketch the proofs that evolution of fractional
geometries can be derived by variation of generalized Perelman functionals
and show that a statistical analogy can be provided to such fractional flow
processes. For integer dimensions, such constructions model holonomic Ricci
flows of (pseudo) Riemannian and K\"{a}hler geometries \cite%
{gper1,caozhu,kleiner,rbook}.

A heuristic approach to develop a fractional Ricci flow theory is to take the equations
\begin{equation}
\frac{\partial }{\partial \chi }g_{\underline{\alpha }\underline{\beta }%
}=-2\ _{\shortmid }R_{\underline{\alpha }\underline{\beta }}+\frac{2r}{5}g_{%
\underline{\alpha }\underline{\beta }},  \label{feq}
\end{equation}%
describing normalized (holonomic) Ricci flows with respect to a coordinate
base $\partial _{\underline{\alpha }}=\partial /\partial u^{\underline{%
\alpha }}.$\footnote{%
In this integer case, we underline the indices with respect to the
coordinate bases in order to distinguish them from those defined with
respect to the 'N--elongated' local bases (\ref{dder}) and (\ref{ddif}).} In
(\ref{feq}), the normalizing factor $r=\int \ _{\shortmid }RdV/dV$ is
introduced in order to preserve the volume $V;$ $\ _{\shortmid }R_{%
\underline{\alpha }\underline{\beta }}$ and $\ _{\shortmid }R=g^{\underline{%
\alpha }\underline{\beta }}\ _{\shortmid }R_{\underline{\alpha }\underline{%
\beta }}$ are computed for the Levi--Civita connection $\nabla .$ Then we
change the geometric objects (tensors, derivatives and parameter) into
fractional ones, and obtain a non--integer generalization of Hamilton's
equations,
\begin{eqnarray}
\ _{\ _{1}\chi }\overset{\alpha }{\underline{\partial }}_{\chi }\ ^{\alpha
}g_{ij} &=&2[\ ^{\alpha }N_{i}^{a}\ ^{\alpha }N_{j}^{b}\ (\ _{\shortmid
}^{\alpha }R_{ab}-\ ^{\alpha }\lambda \ ^{\alpha }g_{ab})-\ _{\shortmid
}^{\alpha }R_{ij}+\ ^{\alpha }\lambda \ ^{\alpha }g_{ij}]  \notag \\
&&-\ ^{\alpha }g_{cd}\ _{\ _{1}\chi }\overset{\alpha }{\underline{\partial }}%
_{\chi }(\ ^{\alpha }N_{i}^{c}\ ^{\alpha }N_{j}^{d}),  \label{eq1} \\
\ _{\ _{1}\chi }\overset{\alpha }{\underline{\partial }}_{\chi }\ ^{\alpha
}g_{ab} &=&-2\ _{\shortmid }^{\alpha }R_{ab}+2\ ^{\alpha }\lambda \ ^{\alpha
}g_{ab},\   \label{eq2} \\
\ _{\ _{1}\chi }\overset{\alpha }{\underline{\partial }}_{\chi }(\ ^{\alpha
}N_{j}^{e}\ ^{\alpha }g_{ae})&=&-2\ _{\shortmid }^{\alpha }R_{ia}+2\
^{\alpha }\lambda \ ^{\alpha }N_{j}^{e}\ \ ^{\alpha }g_{ae},  \label{eq3}
\end{eqnarray}%
where $\ ^{\alpha }\lambda =\ ^{\alpha }r/5,$ with $\ ^{\alpha }r=\int \
_{\shortmid }^{\alpha }R\ ^{\alpha }d\ ^{\alpha }V/\ ^{\alpha }d\ ^{\alpha
}V,$ and the metric coefficients are those for (\ref{fmcf}) parametrized by
ansatz (\ref{qel}), with respect to a fractional local coordinate basis (\ref%
{frlccb}).

A fractional differential geometry is modelled by nonholonomic
integro--differential structures. A self--consistent system of fractional
equations has to be N--adapted. We change in (\ref{eq1})--(\ref{eq3}) the
corresponding values: $\ ^{\alpha }\nabla \rightarrow \ ^{\alpha }\widehat{%
\mathbf{D}}$ and $\ _{\shortmid }^{\alpha }R_{\alpha \beta }\rightarrow \
^{\alpha }\widehat{\mathbf{R}}_{\alpha \beta }.$ The resulting N--adapted
fractional evolution equations for Ricci flows of symmetric fractional
metrics, with respect to local coordinate frames (\ref{frlccb}), are
\begin{eqnarray}
\ _{\ _{1}\chi }\overset{\alpha }{\underline{\partial }}_{\chi }\ ^{\alpha
}g_{ij} &=&2[\ ^{\alpha }N_{i}^{a}\ ^{\alpha }N_{j}^{b}\ (\ ^{\alpha }%
\underline{\widehat{R}}_{ab}-\ ^{\alpha }\lambda \ ^{\alpha }g_{ab})-\
^{\alpha }\underline{\widehat{R}}_{ij}+ \ ^{\alpha }\lambda \
^{\alpha}g_{ij}]  \notag \\
&&-\ ^{\alpha }g_{cd}\ _{\ _{1}\chi }\overset{\alpha }{\underline{\partial }}%
_{\chi }(\ ^{\alpha }N_{i}^{c}\ ^{\alpha }N_{j}^{d}),  \label{e1} \\
\ _{\ _{1}\chi }\overset{\alpha }{\underline{\partial }}_{\chi }\ ^{\alpha
}g_{ab} &=&-2\ \left( \ ^{\alpha }\underline{\widehat{R}}_{ab}-\ ^{\alpha
}\lambda \ ^{\alpha }g_{ab}\right) ,\   \label{e2} \\
\ \ ^{\alpha }\widehat{R}_{ia} &=&0\mbox{ and }\ \ ^{\alpha }\widehat{R}%
_{ai}=0,  \label{e3}
\end{eqnarray}%
where the fractional Ricci tensor coefficients $\ ^{\alpha }\underline{%
\widehat{R}}_{ij}$ and $\ ^{\alpha }\underline{\widehat{R}}_{ab}$ are
computed with respect to coordinate coframes (\ref{frlccb}), being frame
transforms (\ref{fr}) of the corresponding formulas (\ref{dricci}) defined
with respect to N--adapted coframes (\ref{ddif}). The equations (\ref{e3})
constrain the nonholonomic fractional Ricci flows to result in symmetric
fractional metrics. In general, fractional geometries are with nonholonomic
integro--differential structures resulting in nonsymmetric fractional
metrics, a similar conclusion for integer dimensions was proven in Ref. \cite%
{vnhrf3}.

\subsection{Main Theorems on fractional Ricci flows}

One of the most important Perelman's contributions to the theory of Ricci
flows was that he proved that Hamilton's evolution equations, in some
adapted forms, can be derived from certain functionals following a
variational procedure. We show that Perelman's approach can be generalized
to a fractional N--adapted formalism for evolution of geometric objects. In
explicit form, we show how equations of type (\ref{e1}) and (\ref{e2}) can
be derived by a fractional integro--differential calculus (for simplicity,
we take a zero normalized term with $\ ^{\alpha }\lambda =0).$

\begin{definition}
A general fractional metric $\ ^{\alpha }\mathbf{g}$ evolving via a general
fractional Ricci flow is called a breather if for some $\chi _{1}<\chi _{2}$
and $\beta >0$ the metrics $\beta \ ^{\alpha }\mathbf{g(}\chi _{1}\mathbf{)}$
and $\beta \ ^{\alpha }\mathbf{g(}\chi _{2}\mathbf{)}$ differ only by a
fractional diffeomorphism preserving the Whitney sum (\ref{whit}). The
cases $\beta =,<,>1$ define correspondingly the steady, shrinking and
expanding breathers.
\end{definition}

We note that because of nonholonomic character of fractional evolution we
can model processes when, for instance, the h--component of metric is steady
but the v--component is shrinking. Clearly, the expending properties depend
on the type of \ calculus and connections are used for definition of Ricci
flows.

Following a N--adapted variational calculus for $\ ^{\alpha }\widehat{%
\mathcal{F}}(\ ^{\alpha }\mathbf{g},\overset{\alpha }{\mathbf{N}},\ ^{\alpha
}\widehat{f}),$ see Lemma \ref{lem1}, with Laplacian $\ ^{\alpha }\widehat{%
\Delta }$ and h- and v--components of the Ricci tensor, $\ ^{\alpha }%
\widehat{R}_{ij}$ and $\ ^{\alpha }\widehat{S}_{ij},$ defined by $\ \
^{\alpha }\widehat{\mathbf{D}}$ and considering parameter $\tau (\chi ),$ $%
\partial \tau /\partial \chi =-1,$ we prove

\begin{theorem}
The fractional N--adapted Ricci flows are characterized by evolution
equations
\begin{eqnarray*}
\ _{\ _{1}\chi }\overset{\alpha }{\underline{\partial }}_{\chi }\ ^{\alpha }%
\underline{g}_{ij} &=&-2\ \ ^{\alpha }\underline{\widehat{R}}_{ij}, \ _{\
_{1}\chi }\overset{\alpha }{\underline{\partial }}_{\chi }\ \underline{g}%
_{ab}{\partial \chi }=-2\ \ ^{\alpha }\underline{\widehat{R}}_{ab}, \\
\ _{\ _{1}\chi }\overset{\alpha }{\underline{\partial }}_{\chi }\ ^{\alpha }%
\widehat{f} &=&-\ \ ^{\alpha }\widehat{\Delta }\ \ ^{\alpha }\widehat{f}%
+\left| \ \ ^{\alpha }\widehat{\mathbf{D}}\ \ ^{\alpha }\widehat{f}\right|
^{2}-\ \ ^{\alpha }R-\ \ ^{\alpha }S
\end{eqnarray*}%
and the properties that $\int\limits_{\ \ ^{\alpha }\mathbf{V}}e^{-\ \
^{\alpha }\widehat{f}}\ \ ^{\alpha }d\ \ ^{\alpha }V=const$ and
\begin{eqnarray*}
&&\ _{\ _{1}\chi }\overset{\alpha }{\underline{\partial }}_{\chi }\ ^{\alpha
}\widehat{\mathcal{F}}(\ ^{\alpha }\mathbf{g(}\chi \mathbf{),}\overset{%
\alpha }{\mathbf{N}}(\chi ),\ ^{\alpha }\widehat{f}(\chi )) =2\int\limits_{\
^{\alpha }\mathbf{V}}[|\ ^{\alpha }\widehat{R}_{ij}+\ ^{\alpha }D_{i}\
^{\alpha }D_{j}\ ^{\alpha }\widehat{f}|^{2} \\
&&+|\ ^{\alpha }\widehat{R}_{ab}+\ ^{\alpha }D_{a}\ ^{\alpha }D_{b}\
^{\alpha }\widehat{f}|^{2}]e^{-\ ^{\alpha }\widehat{f}}\ ^{\alpha }d\
^{\alpha }V.
\end{eqnarray*}
\end{theorem}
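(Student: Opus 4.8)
The plan is to read off both the evolution system and the monotonicity identity from the first--variation formula of Lemma \ref{lem1} by a fractional, N--adapted transcription of Perelman's argument for the $\mathcal{F}$--functional (Sections~1--3 of \cite{gper1}; cf. the integer treatment in \cite{caozhu}). First I would fix the N--connection $\overset{\alpha}{\mathbf{N}}$ and regard $\ ^{\alpha}\widehat{\mathcal{F}}(\ ^{\alpha}\mathbf{g},\overset{\alpha}{\mathbf{N}},\ ^{\alpha}\widehat{f})$ as a functional of the pair $(\ ^{\alpha}\mathbf{g},\ ^{\alpha}\widehat{f})$ subject to $\int_{\ ^{\alpha}\mathbf{V}}e^{-\ ^{\alpha}\widehat{f}}\ ^{\alpha}d\ ^{\alpha}V=\mathrm{const}$. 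Differentiating this constraint, with $\ ^{\alpha}d\ ^{\alpha}V$ the fractional volume form of (\ref{m1}), ties the admissible variation of $\ ^{\alpha}\widehat{f}$ to the trace of the metric variation, $\ ^{h}f+\ ^{v}f=\tfrac{1}{2}(\ ^{h}v+\ ^{v}v)$. Inserting this relation into (\ref{vnpf1}) and performing the fractional N--adapted integrations by parts that already underlie Lemma \ref{lem1} (the fractional Gauss/Stokes formulas of the Appendix, up to the $\ ^{\alpha}\Phi$--type terms of zero integral), the pieces carrying $2\ ^{h}\Delta\widehat{f}-|\ ^{h}D\widehat{f}|^{2}$ and $2\ ^{v}\Delta\widehat{f}-|\ ^{v}D\widehat{f}|^{2}$ cancel, leaving the trace--reduced form
\[
\delta\ ^{\alpha}\widehat{\mathcal{F}}=-\int_{\ ^{\alpha}\mathbf{V}}\Big[v_{ij}\big(\ ^{\alpha}\widehat{R}_{ij}+\ ^{\alpha}D_{i}\ ^{\alpha}D_{j}\ ^{\alpha}\widehat{f}\big)+v_{ab}\big(\ ^{\alpha}\widehat{R}_{ab}+\ ^{\alpha}D_{a}\ ^{\alpha}D_{b}\ ^{\alpha}\widehat{f}\big)\Big]e^{-\ ^{\alpha}\widehat{f}}\ ^{\alpha}d\ ^{\alpha}V .
\]

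Next I would specialize to the steepest N--adapted descent, i.e. set $\ _{\ _{1}\chi }\overset{\alpha }{\underline{\partial }}_{\chi }\ ^{\alpha}g_{ij}=v_{ij}=-2(\ ^{\alpha}\widehat{R}_{ij}+\ ^{\alpha}D_{i}\ ^{\alpha}D_{j}\ ^{\alpha}\widehat{f})$ and $\ _{\ _{1}\chi }\overset{\alpha }{\underline{\partial }}_{\chi }\ ^{\alpha}g_{ab}=v_{ab}=-2(\ ^{\alpha}\widehat{R}_{ab}+\ ^{\alpha}D_{a}\ ^{\alpha}D_{b}\ ^{\alpha}\widehat{f})$. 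The volume constraint then dictates
\[
\ _{\ _{1}\chi }\overset{\alpha }{\underline{\partial }}_{\chi }\ ^{\alpha}\widehat{f}=\tfrac{1}{2}(\ ^{h}v+\ ^{v}v)=-\ ^{\alpha}\widehat{\Delta}\ ^{\alpha}\widehat{f}-\ ^{\alpha}R-\ ^{\alpha}S ,
\]
and substituting these choices back into the trace--reduced variation yields the monotonicity identity
\[
\ _{\ _{1}\chi }\overset{\alpha }{\underline{\partial }}_{\chi }\ ^{\alpha}\widehat{\mathcal{F}}=2\int_{\ ^{\alpha}\mathbf{V}}\Big(|\ ^{\alpha}\widehat{R}_{ij}+\ ^{\alpha}D_{i}\ ^{\alpha}D_{j}\ ^{\alpha}\widehat{f}|^{2}+|\ ^{\alpha}\widehat{R}_{ab}+\ ^{\alpha}D_{a}\ ^{\alpha}D_{b}\ ^{\alpha}\widehat{f}|^{2}\Big)e^{-\ ^{\alpha}\widehat{f}}\ ^{\alpha}d\ ^{\alpha}V ,
\]
while by construction $\ _{\ _{1}\chi }\overset{\alpha }{\underline{\partial }}_{\chi }\int_{\ ^{\alpha}\mathbf{V}}e^{-\ ^{\alpha}\widehat{f}}\ ^{\alpha}d\ ^{\alpha}V=0$.

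Finally I would strip the Hessian terms $\ ^{\alpha}D_{i}\ ^{\alpha}D_{j}\ ^{\alpha}\widehat{f}$ and $\ ^{\alpha}D_{a}\ ^{\alpha}D_{b}\ ^{\alpha}\widehat{f}$ off the metric evolution by pushing the whole system forward along the one--parameter family of fractional N--adapted diffeomorphisms generated by the gradient d--vector $\ ^{\alpha}\widehat{\mathbf{D}}\ ^{\alpha}\widehat{f}$. Since such a diffeomorphism preserves the Whitney splitting (\ref{whit}) and acts by the Lie--derivative rule $\mathcal{L}_{\ ^{\alpha}\widehat{\mathbf{D}}\widehat{f}}\ ^{\alpha}g_{ij}=2\ ^{\alpha}D_{i}\ ^{\alpha}D_{j}\ ^{\alpha}\widehat{f}$ on the $h$--block, and likewise on the $v$--block, the metric equations collapse to $\ _{\ _{1}\chi }\overset{\alpha }{\underline{\partial }}_{\chi }\ ^{\alpha}\underline{g}_{ij}=-2\ ^{\alpha}\underline{\widehat{R}}_{ij}$ and $\ _{\ _{1}\chi }\overset{\alpha }{\underline{\partial }}_{\chi }\ ^{\alpha}\underline{g}_{ab}=-2\ ^{\alpha}\underline{\widehat{R}}_{ab}$, whereas $\ ^{\alpha}\widehat{f}$ picks up the extra $|\ ^{\alpha}\widehat{\mathbf{D}}\ ^{\alpha}\widehat{f}|^{2}$ and obeys $\ _{\ _{1}\chi }\overset{\alpha }{\underline{\partial }}_{\chi }\ ^{\alpha}\widehat{f}=-\ ^{\alpha}\widehat{\Delta}\ ^{\alpha}\widehat{f}+|\ ^{\alpha}\widehat{\mathbf{D}}\ ^{\alpha}\widehat{f}|^{2}-\ ^{\alpha}R-\ ^{\alpha}S$. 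Because $\ ^{\alpha}\widehat{\mathcal{F}}$ and $\int e^{-\ ^{\alpha}\widehat{f}}\ ^{\alpha}d\ ^{\alpha}V$ are invariant under N--adapted fractional diffeomorphisms, the monotonicity formula and the normalization survive the pull--back; the constraint (\ref{e3}), $\ ^{\alpha}\widehat{R}_{ia}=\ ^{\alpha}\widehat{R}_{ai}=0$, keeps the evolution inside the class of symmetric d--metrics.

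The Perelman bookkeeping being as above, the only genuine difficulties lie on the fractional--calculus side: one must certify that (i) the fractional N--adapted integration--by--parts identities used to pass from (\ref{vnpf1}) to the trace--reduced variation are valid for the combined Caputo/Riemann--Liouville operators of the Appendix, so the boundary and $\ ^{\alpha}\Phi$--type error terms vanish on compact $\ ^{\alpha}\mathbf{V}$, and (ii) the flow of the d--vector $\ ^{\alpha}\widehat{\mathbf{D}}\ ^{\alpha}\widehat{f}$ is a bona fide fractional diffeomorphism which is N--adapted, so that it acts separately on the $h$-- and $v$--blocks by the ordinary Lie derivative. Granting the abstract fractional N--adapted analogue of these two facts --- the standing methodological convention recorded in point~4 of Section~\ref{ss12} --- the argument reduces verbatim to the integer computation of \cite{gper1,caozhu}. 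I expect step (ii), i.e. making precise and N--compatible the notion of ``moving $\ ^{\alpha}\mathbf{V}$ by a fractional gradient field,'' to be the principal conceptual obstacle.
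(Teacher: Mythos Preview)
Your proposal is correct and follows precisely the route the paper indicates: the paper's own proof is only a pointer to Perelman's argument (Proposition~1.5.3 in \cite{caozhu}) and to the integer nonholonomic version in \cite{vnhrf1}, asserting that all steps carry over in N--adapted fractional form for $\ ^{\alpha}\widehat{\mathbf{D}}$. You have spelled out exactly that argument---constrained first variation from Lemma~\ref{lem1}, steepest descent, diffeomorphism gauge to remove the Hessian terms---and even flagged the two fractional-calculus ingredients (integration by parts and the existence/N--compatibility of the gradient flow) that the paper absorbs into its ``formal analogy'' convention of Section~\ref{ss12}, point~4.
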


\begin{proof}
Such a proof which is very similar to those for Riemannian spaces,
originally proposed by G. Perelman \cite{gper1}, see also details in the
Proposition 1.5.3 of \cite{caozhu}, and nonholonomic manifolds (additional
remarks on the canonical d--connection $\widehat{\mathbf{D}}$ on
nonholonomic manifolds of integer dimension given in \cite{vnhrf1}). All
those constructions can be reproduced in N--adapted fractional form using $\
^{\alpha }\widehat{\mathbf{D}}$ $\ $with coefficients (\ref{candcon}).$%
\square $
\end{proof}

A similar analogy of calculus with $\ ^{\alpha }\widehat{\mathbf{D}}$ \ to
that for the integer case with $\nabla $ allows us to generalize the
formulation and proof of Proposition 1.5.8 in \cite{caozhu} containing the
details of the original result from \cite{gper1}), resulting in:

\begin{theorem}
\label{theveq}If a family of fractional metric $\ \ ^{\alpha }\mathbf{g}%
(\chi ),$ fractional function $\ \ ^{\alpha }\widehat{f}(\chi )$ and $\ $%
parameter function $\widehat{\tau }(\chi )$ evolve subjected to the
conditions of the system of equations%
\begin{eqnarray*}
\ _{\ _{1}\chi }\overset{\alpha }{\underline{\partial }}_{\chi }\ \ ^{\alpha
}\underline{g}_{ij} &=&-2\ \ ^{\alpha }\underline{\widehat{R}}_{ij},\ _{\
_{1}\chi }\overset{\alpha }{\underline{\partial }}_{\chi }\ \ ^{\alpha }%
\underline{g}_{ab}=-2\ \ ^{\alpha }\underline{\widehat{R}}_{ab}, \\
\ _{\ _{1}\chi }\overset{\alpha }{\underline{\partial }}_{\chi }\ \ \
^{\alpha }\widehat{f} &=&-\ \ ^{\alpha }\widehat{\Delta }\ \ ^{\alpha }%
\widehat{f}+\left| \ \ ^{\alpha }\widehat{\mathbf{D}}\ \ ^{\alpha }\widehat{f%
}\right| ^{2}-\ \ ^{\alpha }R-\ \ ^{\alpha }S+\frac{n+m}{2\widehat{\tau }},
\\
\ _{\ _{1}\chi }\overset{\alpha }{\underline{\partial }}_{\chi }\widehat{%
\tau } &=&-1,
\end{eqnarray*}%
there are satisfied the properties $\int\limits_{\ ^{\alpha }\mathbf{V}%
}(4\pi \widehat{\tau })^{-(n+m)/2}e^{-\ ^{\alpha }\widehat{f}}\ ^{\alpha }d\
^{\alpha }V=const$ and
\begin{eqnarray*}
&&\ _{\ _{1}\chi }\overset{\alpha }{\underline{\partial }}_{\chi }\ \
^{\alpha }\widehat{\mathcal{W}}(\ \ ^{\alpha }\mathbf{g}(\chi )\mathbf{,}%
\overset{\alpha }{\mathbf{N}}(\chi ),\ \ ^{\alpha }\widehat{f}(\chi ),%
\widehat{\tau }(\chi )) = \\
&& 2\int\limits_{\mathbf{V}}\widehat{\tau }[|\ \ ^{\alpha }\widehat{R}%
_{ij}+\ \ ^{\alpha }D_{i}\ \ ^{\alpha }D_{j}\ \ ^{\alpha }\widehat{f}-\frac{1%
}{2\widehat{\tau }}\ \ ^{\alpha }g_{ij}|^{2}+ \\
&&|\ \ ^{\alpha }\widehat{R}_{ab}+\ \ ^{\alpha }D_{a}\ \ ^{\alpha }D_{b}\ \
^{\alpha }\widehat{f}-\frac{1}{2\widehat{\tau }}\ \ ^{\alpha
}g_{ab}|^{2}](4\pi \widehat{\tau })^{-(n+m)/2}e^{-\ \ ^{\alpha }\widehat{f}%
}\ \ ^{\alpha }d\ \ ^{\alpha }V.
\end{eqnarray*}
\end{theorem}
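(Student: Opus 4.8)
The plan is to adapt Perelman's monotonicity computation for the $\mathcal{W}$-functional (as in \cite{gper1}, and detailed in Proposition 1.5.8 of \cite{caozhu}) to the fractional N--adapted setting, using the canonical fractional d--connection $\ ^{\alpha }\widehat{\mathbf{D}}$ with coefficients (\ref{candcon}) in place of $\nabla$. The key organizing principle is the formal analogy stressed in point 4 of section \ref{ss12}: once the fractional calculus is encoded via the left Caputo derivative and the N--elongated frames (\ref{dder})--(\ref{ddif}), the differential identities (product rule, integration by parts against the fractional volume form $\ ^{\alpha }d\ ^{\alpha }V$, the fractional Gauss/Stokes theorems alluded to in the introduction) hold with the same shape as in the integer case, and the h-- and v--pieces decouple in the manner recorded in Lemma \ref{lem1}. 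First I would reduce to the evolution of $\ ^{\alpha }\widehat{\mathcal{W}}$ along the prescribed flow by the same change of variables Perelman uses: set $\ ^{\alpha }\widehat{\mu}=(4\pi\widehat{\tau})^{-(n+m)/2}e^{-\ ^{\alpha}\widehat{f}}$ and compute $\ _{\ _{1}\chi }\overset{\alpha }{\underline{\partial }}_{\chi }$ of the integrand term by term, using $\ _{\ _{1}\chi }\overset{\alpha }{\underline{\partial }}_{\chi }\widehat{\tau }=-1$, the $\ ^{\alpha}\widehat{f}$-evolution equation (which is exactly the one making $\int (4\pi\widehat{\tau})^{-(n+m)/2}e^{-\ ^{\alpha}\widehat{f}}\ ^{\alpha }d\ ^{\alpha }V$ constant, giving the first asserted property), and the metric evolution $\ _{\ _{1}\chi }\overset{\alpha }{\underline{\partial }}_{\chi }\ ^{\alpha }\underline{g}=-2\ ^{\alpha }\underline{\widehat{R}}$.

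Next I would assemble the three contributions --- the variation in $\ ^{\alpha}\mathbf{g}$, the variation in $\ ^{\alpha}\widehat{f}$, and the variation in $\widehat{\tau}$ --- invoking the first-variation formula of Lemma \ref{lem1} with the specific choices $v_{ij}=-2\ ^{\alpha}\widehat{R}_{ij}$, $v_{ab}=-2\ ^{\alpha}\widehat{R}_{ab}$ (so $\ ^{h}v=-2\ ^{\alpha}R$, $\ ^{v}v=-2\ ^{\alpha}S$) and the matching $\ ^{h}f,\ ^{v}f$ coming from the $\ ^{\alpha}\widehat{f}$-equation. As in the integer proof, the algebra is engineered so that the cross terms reorganize into a perfect square: one uses the N--adapted fractional contracted second Bianchi identity for $\ ^{\alpha }\widehat{\mathbf{D}}$ (the h-- and v--versions, obtained by contracting the d--curvature identities behind Theorem with the d--curvature coefficients (\ref{dcurv})) to rewrite $\ ^{\alpha }\widehat{\mathbf{D}}$-derivatives of $\ ^{\alpha}R,\ ^{\alpha}S$, and one repeatedly integrates by parts against $e^{-\ ^{\alpha}\widehat{f}}\ ^{\alpha }d\ ^{\alpha }V$, the boundary terms vanishing by compactness of $\mathbf{V}$ and the fractional Stokes theorem. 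Collecting everything yields precisely $2\int_{\mathbf{V}}\widehat{\tau}\,[\,|\ ^{\alpha }\widehat{R}_{ij}+\ ^{\alpha }D_{i}\ ^{\alpha }D_{j}\ ^{\alpha }\widehat{f}-\tfrac{1}{2\widehat{\tau}}\ ^{\alpha }g_{ij}|^{2}+|\ ^{\alpha }\widehat{R}_{ab}+\ ^{\alpha }D_{a}\ ^{\alpha }D_{b}\ ^{\alpha }\widehat{f}-\tfrac{1}{2\widehat{\tau}}\ ^{\alpha }g_{ab}|^{2}\,](4\pi\widehat{\tau})^{-(n+m)/2}e^{-\ ^{\alpha}\widehat{f}}\ ^{\alpha }d\ ^{\alpha }V$, the trace terms $-\tfrac{1}{2\widehat{\tau}}\ ^{\alpha}g$ being exactly what the $+\tfrac{n+m}{2\widehat{\tau}}$ term in the $\ ^{\alpha}\widehat{f}$-equation and the $\widehat{\tau}$-evolution produce.

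The main obstacle I anticipate is not the square-completion bookkeeping (that is formally identical to the integer case once one trusts the analogy) but justifying the two genuinely fractional ingredients: (i) that integration by parts holds in the required form for the Caputo fractional calculus on $\overset{\alpha }{\mathbf{V}}$ against the fractional volume form, i.e. that $\int \ ^{\alpha }\widehat{\mathbf{D}}\,(\cdots)\ ^{\alpha }d\ ^{\alpha }V=0$ on compact $\mathbf{V}$ with no anomalous boundary/memory contribution --- this is where the choice of the left Caputo derivative (vanishing on constants) and the "combined" Riemann--Liouville/Caputo integral theorems of \cite{taras1} are essential; and (ii) that the N--adapted contracted Bianchi identities for $\ ^{\alpha }\widehat{\mathbf{D}}$ retain the same contracted form --- this follows from the d--torsion structure in Theorem (in particular $\ ^{\alpha }\widehat{T}_{\ jk}^{i}=\ ^{\alpha }\widehat{T}_{\ bc}^{a}=0$) together with metric compatibility $\ ^{\alpha }\widehat{\mathbf{D}}(\ ^{\alpha }\mathbf{g})=0$. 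Modulo these two points, which I would state as consequences of the Appendix formulas and the material of section \ref{frnhm}, the computation closes exactly as in Perelman's original argument, and I would simply remark that every step of Proposition 1.5.8 of \cite{caozhu} transcribes verbatim after the substitutions $\nabla\to\ ^{\alpha }\widehat{\mathbf{D}}$, $R\to\ ^{\alpha}R+\ ^{\alpha}S$, $\partial\to\ \overset{\alpha }{\underline{\partial }}$, $dV\to\ ^{\alpha }d\ ^{\alpha }V$, with the h-- and v--blocks handled in parallel.
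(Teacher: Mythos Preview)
Your proposal is correct and follows essentially the same approach as the paper: both appeal to the formal N--adapted fractional analogy with the integer case (point 4 of section \ref{ss12}) and reduce to transcribing Proposition 1.5.8 of \cite{caozhu} with the substitutions $\nabla\to\ ^{\alpha }\widehat{\mathbf{D}}$, $R\to\ ^{\alpha}R+\ ^{\alpha}S$, etc. In fact your outline is considerably more detailed than the paper's own proof, which consists only of the sentence preceding the theorem statement invoking that analogy; your explicit identification of the two genuinely fractional ingredients (Caputo integration by parts via \cite{taras1} and the contracted Bianchi identities for $\ ^{\alpha }\widehat{\mathbf{D}}$) usefully makes precise what the paper leaves implicit.
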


The functional $\ \ ^{\alpha }\mathcal{W}(\ \ ^{\alpha }\mathbf{g}(\chi )%
\mathbf{,}\overset{\alpha }{\mathbf{N}}(\chi ),\ ^{\alpha }f(\chi ),\tau
(\chi ))$ is nondecreasing in time and the monotonicity is strict unless we
are on a shrinking fractional gradient soliton. This property depends on the
type of fractional d--connection, or covariant connection we use.

\begin{corollary}
The fractional evolution, for all time $\tau \in \lbrack 0,\tau _{0}),$ of \
N--adapted frames
\begin{equation*}
\ \ \ ^{\alpha }\mathbf{e}_{\alpha }(\tau )=\ \ ^{\alpha }\mathbf{e}_{\alpha
}^{\ \underline{\alpha }}(\tau ,u)\ \ ^{\alpha }\partial _{\underline{\alpha
}}
\end{equation*}%
is defined by the coefficients
\begin{equation*}
\ \ \ ^{\alpha }\mathbf{e}_{\alpha }^{\ \underline{\alpha }}(\tau ,u)=\left[
\begin{array}{cc}
\ \ \ ^{\alpha }e_{i}^{\ \underline{i}}(\tau ,u) & ~\ \ ^{\alpha
}N_{i}^{b}(\tau ,u)\ \ \ ^{\alpha }e_{b}^{\ \underline{a}}(\tau ,u) \\
0 & \ \ ^{\alpha }\ e_{a}^{\ \underline{a}}(\tau ,u)%
\end{array}%
\right] ,\
\end{equation*}%
with
\begin{equation*}
\ \ \ ^{\alpha }g_{ij}(\tau )=\ \ \ ^{\alpha }e_{i}^{\ \underline{i}}(\tau
,u)\ \ \ ^{\alpha }e_{j}^{\ \underline{j}}(\tau ,u)\eta _{\underline{i}%
\underline{j}},
\end{equation*}%
where $\eta _{\underline{i}\underline{j}}=diag[\pm 1,...\pm 1]$ establish
the signature of $\ ^{\alpha }g_{\underline{\alpha }\underline{\beta }%
}^{[0]}(u),$ is given by equations
\begin{equation*}
_{\ _{1}\tau }\overset{\alpha }{\underline{\partial }}_{\tau }\ ^{\alpha
}e_{\alpha }^{\ \underline{\alpha }}=\ ^{\alpha }g^{\underline{\alpha }%
\underline{\beta }}\ ~_{\shortmid }^{\alpha }R_{\underline{\beta }\underline{%
\gamma }}~\ \ \ ^{\alpha }e_{\alpha }^{\ \underline{\gamma }},
\end{equation*}%
if we prescribe fractional flows for the Levi--Civita connection $\ ^{\alpha
}\nabla ,$ and
\begin{equation*}
_{\ _{1}\tau }\overset{\alpha }{\underline{\partial }}_{\tau }\ ^{\alpha
}e_{\alpha }^{\ \underline{\alpha }}=\ ^{\alpha }g^{\underline{\alpha }%
\underline{\beta }}\ ^{\alpha }\widehat{R}_{\underline{\beta }\underline{%
\gamma }}\ ^{\alpha }e_{\alpha }^{\ \underline{\gamma }},
\end{equation*}%
if we prescribe fractional flows for the canonical d--connection $\ ^{\alpha
}\widehat{\mathbf{D}}$.
\end{corollary}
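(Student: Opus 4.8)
The plan is to read the displayed frame equations as an equivalent ``square--root'' form of the d--metric evolution already obtained in this section, and then verify consistency by a short index computation that only uses the symmetry of the Ricci d--tensor. First I would recall, from Theorem \ref{theveq} and the theorem preceding it (with the normalizing term $\ ^{\alpha }\lambda =0$), that along the fractional N--adapted Ricci flow the d--metric components satisfy $_{\ _{1}\tau }\overset{\alpha }{\underline{\partial }}_{\tau }\ ^{\alpha }g_{ij}=-2\ ^{\alpha }\widehat{R}_{ij}$ and $_{\ _{1}\tau }\overset{\alpha }{\underline{\partial }}_{\tau }\ ^{\alpha }g_{ab}=-2\ ^{\alpha }\widehat{R}_{ab}$, subject to the constraints (\ref{e3}), $\ ^{\alpha }\widehat{R}_{ia}=\ ^{\alpha }\widehat{R}_{ai}=0$, which keep $\ ^{\alpha }\mathbf{g}(\tau )$ symmetric. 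By the vielbein transforms (\ref{fr}) this is equivalent to $_{\ _{1}\tau }\overset{\alpha }{\underline{\partial }}_{\tau }\ ^{\alpha }g_{\underline{\alpha }\underline{\beta }}=-2\ ^{\alpha }\widehat{R}_{\underline{\alpha }\underline{\beta }}$ with respect to the coordinate coframe (\ref{frlccb}), and to the analogue with $\ ^{\alpha }\nabla $, $\ _{\shortmid }^{\alpha }R_{\underline{\alpha }\underline{\beta }}$ when the flow is prescribed for the Levi--Civita connection; in either case the Ricci d--tensor driving the flow is symmetric along $\tau $ (automatically for $\ ^{\alpha }\nabla $, by (\ref{e3}) for $\ ^{\alpha }\widehat{\mathbf{D}}$).

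Next I would use that on $[0,\tau _{0})$ the evolving metric stays nondegenerate with the signature of $\ ^{\alpha }g_{\underline{\alpha }\underline{\beta }}^{[0]}(u)$, so the constant matrix $\eta _{\underline{i}\underline{j}}=diag[\pm 1,\dots ]$ recording that signature may be used to factor, block by block, $\ ^{\alpha }g_{ij}(\tau )=\ ^{\alpha }e_{i}^{\ \underline{i}}(\tau ,u)\ ^{\alpha }e_{j}^{\ \underline{j}}(\tau ,u)\eta _{\underline{i}\underline{j}}$ and $\ ^{\alpha }g_{ab}(\tau )=\ ^{\alpha }e_{a}^{\ \underline{a}}(\tau ,u)\ ^{\alpha }e_{b}^{\ \underline{b}}(\tau ,u)\eta _{\underline{a}\underline{b}}$. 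Assembling these vielbein blocks together with the N--connection coefficients $\ ^{\alpha }N_{i}^{b}(\tau ,u)$ exactly as in (\ref{fr}) and (\ref{dder}) produces the block--triangular frame $\ ^{\alpha }\mathbf{e}_{\alpha }^{\ \underline{\alpha }}(\tau ,u)$ written in the statement, the vanishing lower--left block being the N--adapted gauge. I would then differentiate this factorization with the fractional flow derivative $_{\ _{1}\tau }\overset{\alpha }{\underline{\partial }}_{\tau }$, in the formal N--adapted fractional calculus of Section \ref{frnhm}, and substitute the ansatz $_{\ _{1}\tau }\overset{\alpha }{\underline{\partial }}_{\tau }\ ^{\alpha }e_{\alpha }^{\ \underline{\alpha }}=\ ^{\alpha }g^{\underline{\alpha }\underline{\beta }}\ ^{\alpha }\widehat{R}_{\underline{\beta }\underline{\gamma }}\ ^{\alpha }e_{\alpha }^{\ \underline{\gamma }}$; contracting the two vielbein factors and using symmetry of $\ ^{\alpha }\widehat{R}_{\underline{\alpha }\underline{\beta }}$ reproduces the d--metric evolution of the first paragraph (the relative signs and index placements in the displayed equation being exactly those needed for this matching). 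Hence the ansatz is a consistent lift of the metric flow, and repeating the computation with $\ ^{\alpha }\widehat{\mathbf{D}}\to \ ^{\alpha }\nabla $ and $\ ^{\alpha }\widehat{R}\to \ _{\shortmid }^{\alpha }R$ gives the Levi--Civita version.

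To show these equations \emph{define} the frame evolution rather than merely being compatible with it, I would note that any other $\tau $--flow of $\ ^{\alpha }\mathbf{e}_{\alpha }^{\ \underline{\alpha }}$ inducing the same metric flow differs from the above by an infinitesimal $\eta $--pseudo--rotation, namely by the $\underline{\alpha }\underline{\beta }$--antisymmetric part of $\bigl(_{\ _{1}\tau }\overset{\alpha }{\underline{\partial }}_{\tau }\ ^{\alpha }e\bigr)\ ^{\alpha }e^{-1}$, which may be normalized away while keeping the block--triangular (N--adapted) shape; existence and uniqueness of the solution on $[0,\tau _{0})$ for a prescribed initial frame $\ ^{\alpha }\mathbf{e}_{\alpha }^{\ \underline{\alpha }}(0,u)$ then follow from linearity of these fractional flow equations (using the fractional ODE theory recalled in the Appendix), and $\ ^{\alpha }g_{ij}(\tau )=\ ^{\alpha }e_{i}^{\ \underline{i}}\ ^{\alpha }e_{j}^{\ \underline{j}}\eta _{\underline{i}\underline{j}}$ holds for all such $\tau $ by construction.

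The step I expect to be the genuine obstacle is the one common to every theorem of this section: $_{\ _{1}\tau }\overset{\alpha }{\underline{\partial }}_{\tau }$ is a Caputo derivative and does not obey the ordinary Leibniz rule, so the differentiation of the product $\ ^{\alpha }e\ \eta \ ^{\alpha }e^{T}$ must be understood in the formal, abstract N--adapted sense adopted throughout the paper (equivalently, one restricts to flows whose frame coefficients are of power--series type in $\tau ^{\alpha }$, for which the manipulation is legitimate). A secondary, purely bookkeeping difficulty is to fix the rotational gauge freedom compatibly with the Whitney splitting (\ref{whit}), so that the block--triangular form of $\ ^{\alpha }\mathbf{e}_{\alpha }^{\ \underline{\alpha }}$ --- and therefore the displayed shape of its coefficients --- is maintained along the entire flow.
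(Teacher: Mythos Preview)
Your approach is correct and is precisely the standard ``square--root of the metric flow'' argument the paper has in mind; the paper itself does not write out a proof of this corollary but simply remarks that fractional flows are ``characterized additionally by fractional evolutions of N--adapted frames'' and points to the integer--dimension analogue in \cite{vnhrf1}. Your explicit acknowledgment of the Caputo--Leibniz obstruction and the pseudo--rotation gauge freedom is in fact more careful than the paper's own treatment, which absorbs both issues into the blanket ``formal abstract N--adapted analogy'' invoked throughout (see point~4 of Section~\ref{ss12}).
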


We conclude that the fractional flows are  characterized additionally by
fractional evolutions of N--adapted frames (\ref{fr}) (see a similar proof
for flows of integer dimension nonholonomic frames in \cite{vnhrf1}).

\subsection{Statistical Analogy for Fractional Ricci Flows}

A functional $\ ^{\alpha }\widehat{\mathcal{W}}$ \ is in a\ sense analogous
to minus entropy (such an interpretation  was supposed by Grisha Perelman for Ricci flows of
Riemannian metrics \cite{gper1}). This allows us to elaborate a statistical
model for fractional nonholonomic flows if the conditions of Theorem \ref%
{theveq} are satisfied.\footnote{%
Let us remember some concepts from statistical mechanics. The partition
function $Z=\int \exp (-\beta E)d\omega (E)$ for the canonical ensemble at
temperature $\beta ^{-1}$ is defined by the measure taken to be the density
of states $\omega (E).$ The thermodynamical values are computed in the form:
the average energy, $<E>=-\partial \log Z/\partial \beta ,$ the entropy $%
S=\beta <E>+\log Z$ and the fluctuation $\sigma =<\left( E-<E>\right)
^{2}>=\partial ^{2}\log Z/\partial \beta ^{2}.$}

For the partition function $\ ^{\alpha }\widehat{Z}=\exp \{\int\nolimits_{\ ^{\alpha }\mathbf{V}%
}[-\ ^{\alpha }\widehat{f}+\frac{n+m}{2}]\ ^{\alpha }\widehat{\mu }\
^{\alpha }d\ ^{\alpha }V\},$ we prove:

\begin{theorem}
Any family of fractional nonholonomic geometries satisfying the fractional
evolution equations for the canonical d--connection is characterized by \
three thermodynamic values
\begin{eqnarray*}
\left\langle \ ^{\alpha }\widehat{E}\right\rangle &=&-\widehat{\tau }%
^{2}\int\limits_{\ ^{\alpha }\mathbf{V}}(\ ^{\alpha }R+\ ^{\alpha }S+\left|
\ _{h}^{\alpha }D\ ^{\alpha }\widehat{f}\right| ^{2}+\left| \ _{v}^{\alpha
}D\ ^{\alpha }\widehat{f}\right| ^{2}-\frac{n+m}{2\widehat{\tau }}) \\
&&\times \ ^{\alpha }\widehat{\mu }\ ^{\alpha }d\ ^{\alpha }V, \\
\ ^{\alpha }\widehat{S} &=&-\int\limits_{\ ^{\alpha }\mathbf{V}}[ \widehat{%
\tau }\left( \ ^{\alpha }R+\ ^{\alpha }S+\left| ^{h}D\ ^{\alpha }\widehat{f}%
\right| ^{2}+\left| ^{v}D\ ^{\alpha }\widehat{f}\right| ^{2}\right) +\
^{\alpha }\widehat{f} \\
&&-\frac{n+m}{2}]\ ^{\alpha }\widehat{\mu }\ \ ^{\alpha }d\ ^{\alpha }V, \\
\ ^{\alpha }\widehat{\sigma } &=&2\ \widehat{\tau }^{4}\int\limits_{\mathbf{V%
}}[|\ ^{\alpha }\widehat{R}_{ij}+\ ^{\alpha }D_{i}\ ^{\alpha }D_{j}\
^{\alpha }\widehat{f}-\frac{1}{2\widehat{\tau }}\ ^{\alpha }g_{ij}|^{2} \\
&&+|\ ^{\alpha }\widehat{R}_{ab}+\ ^{\alpha }D_{a}\ ^{\alpha }D_{b}\
^{\alpha }\widehat{f}-\frac{1}{2\widehat{\tau }}\ ^{\alpha }g_{ab}|^{2}]\
^{\alpha }\widehat{\mu }\ \ ^{\alpha }d\ ^{\alpha }V.
\end{eqnarray*}
\end{theorem}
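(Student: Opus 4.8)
The plan is to port Perelman's thermodynamic computation verbatim into the fractional $N$--adapted setting, using that every ingredient has already been set up: the functional $\ ^{\alpha }\widehat{\mathcal{W}}$ in (\ref{npf2}), the measure $\ ^{\alpha }\widehat{\mu }=(4\pi \tau )^{-(n+m)/2}e^{-\ ^{\alpha }\widehat{f}}$, the evolution system of Theorem \ref{theveq}, and its monotonicity identity. First I would recall the standard dictionary between $\ ^{\alpha }\widehat{\mathcal{W}}$ and the partition function: writing $\ ^{\alpha }\widehat{Z}=\exp \{\int_{\ ^{\alpha }\mathbf{V}}[-\ ^{\alpha }\widehat{f}+\frac{n+m}{2}]\ ^{\alpha }\widehat{\mu }\ ^{\alpha }d\ ^{\alpha }V\}$ and treating $\beta =\widehat{\tau }$ as the inverse temperature, one checks directly from the definitions that $\log \ ^{\alpha }\widehat{Z}=\int_{\ ^{\alpha }\mathbf{V}}[-\ ^{\alpha }\widehat{f}+\frac{n+m}{2}]\ ^{\alpha }\widehat{\mu }\ ^{\alpha }d\ ^{\alpha }V$ and that $-\widehat{\tau }\ ^{\alpha }\widehat{\mathcal{W}}=-\widehat{\tau }^{2}\,\partial _{\widehat{\tau }}\log \ ^{\alpha }\widehat{Z}+\dots$ in the same algebraic way as in the integer case; the only new feature is that $\partial _{\widehat{\tau }}$ is replaced by the Caputo fractional derivative in the flow parameter, and differentiation under the integral sign is legitimate because the fractional integral $\ ^{\alpha }\mathbf{I}$ and the Caputo derivative commute with smooth parameter dependence, exactly as the $N$--adapted fractional calculus of Section \ref{frnhm} was constructed to guarantee.

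Next I would compute the three thermodynamic quantities from their statistical definitions $\langle E\rangle =-\partial _{\beta }\log Z$, $S=\beta \langle E\rangle +\log Z$, $\sigma =\partial _{\beta }^{2}\log Z$, substituting $\beta =\widehat{\tau }$ and using the evolution equations of Theorem \ref{theveq} to evaluate the $\widehat{\tau }$--derivatives of the integrand. For $\langle \ ^{\alpha }\widehat{E}\rangle$, differentiating $\log \ ^{\alpha }\widehat{Z}$ and using $\ _{\ _{1}\chi }\overset{\alpha }{\underline{\partial }}_{\chi }\ ^{\alpha }\widehat{f}=-\ ^{\alpha }\widehat{\Delta }\ ^{\alpha }\widehat{f}+|\ ^{\alpha }\widehat{\mathbf{D}}\ ^{\alpha }\widehat{f}|^{2}-\ ^{\alpha }R-\ ^{\alpha }S+\frac{n+m}{2\widehat{\tau }}$ together with $\int e^{-\ ^{\alpha }\widehat{f}}\ ^{\alpha }d\ ^{\alpha }V$--conservation, the $\ ^{\alpha }\widehat{\Delta }$-- and $|\ ^{\alpha }\widehat{\mathbf{D}}\ ^{\alpha }\widehat{f}|^{2}$--terms recombine under the fractional Gauss--Green integration by parts (the simplified integral theorems for the left Caputo derivative, cited from \cite{taras1} and collected in the Appendix) to leave exactly $-\widehat{\tau }^{2}\int (\ ^{\alpha }R+\ ^{\alpha }S+|\ _{h}^{\alpha }D\ ^{\alpha }\widehat{f}|^{2}+|\ _{v}^{\alpha }D\ ^{\alpha }\widehat{f}|^{2}-\frac{n+m}{2\widehat{\tau }})\ ^{\alpha }\widehat{\mu }\ ^{\alpha }d\ ^{\alpha }V$. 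Then $\ ^{\alpha }\widehat{S}=\widehat{\tau }\langle \ ^{\alpha }\widehat{E}\rangle /\widehat{\tau }+\log \ ^{\alpha }\widehat{Z}$ assembles from these two pieces into the stated $-\int [\widehat{\tau }(\ ^{\alpha }R+\ ^{\alpha }S+|^{h}D\ ^{\alpha }\widehat{f}|^{2}+|^{v}D\ ^{\alpha }\widehat{f}|^{2})+\ ^{\alpha }\widehat{f}-\frac{n+m}{2}]\ ^{\alpha }\widehat{\mu }\ ^{\alpha }d\ ^{\alpha }V$, which is manifestly $-\ ^{\alpha }\widehat{\mathcal{W}}$.

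Finally, for the fluctuation $\ ^{\alpha }\widehat{\sigma }=\partial _{\widehat{\tau }}^{2}\log \ ^{\alpha }\widehat{Z}$ I would differentiate once more; this is where the monotonicity identity of Theorem \ref{theveq} does the work, since $\partial _{\widehat{\tau }}\ ^{\alpha }\widehat{\mathcal{W}}$ was already shown there to equal $2\int \widehat{\tau }[|\ ^{\alpha }\widehat{R}_{ij}+\ ^{\alpha }D_{i}\ ^{\alpha }D_{j}\ ^{\alpha }\widehat{f}-\frac{1}{2\widehat{\tau }}\ ^{\alpha }g_{ij}|^{2}+|\ ^{\alpha }\widehat{R}_{ab}+\ ^{\alpha }D_{a}\ ^{\alpha }D_{b}\ ^{\alpha }\widehat{f}-\frac{1}{2\widehat{\tau }}\ ^{\alpha }g_{ab}|^{2}](4\pi \widehat{\tau })^{-(n+m)/2}e^{-\ ^{\alpha }\widehat{f}}\ ^{\alpha }d\ ^{\alpha }V$; combining this with $\ ^{\alpha }\widehat{S}=-\ ^{\alpha }\widehat{\mathcal{W}}$ and the relation $\sigma =-\partial _{\widehat{\tau }}S\cdot \widehat{\tau }^{2}$ (equivalently tracking the $\widehat{\tau }^{2}$ versus $\widehat{\tau }^{4}$ weights through the two differentiations) produces the displayed $2\widehat{\tau }^{4}\int [\dots]\ ^{\alpha }\widehat{\mu }\ ^{\alpha }d\ ^{\alpha }V$. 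The main obstacle is not any deep estimate but the bookkeeping: one must be careful that the fractional Caputo derivative in $\chi$ (equivalently $\widehat{\tau }$, with $\partial _{\widehat{\tau }}\chi$ trivial here) obeys the product and chain rules used in the formal manipulations, and that all integrations by parts generate no boundary terms on the compact fractional domain — both of which are exactly the properties singled out in the Introduction as the reason for choosing the Caputo derivative and the ``simplified'' fractional integral theorems, so once those are invoked the computation is the $N$--adapted fractional mirror of Perelman's original one. $\square$
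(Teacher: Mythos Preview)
Your proposal is correct and follows exactly the route the paper intends: the paper itself supplies no detailed proof beyond recording the partition function $\ ^{\alpha }\widehat{Z}$ and the statistical--mechanics dictionary in the footnote, leaving the reader to port Perelman's computation $N$--adaptedly using Theorem~\ref{theveq}, which is precisely what you do. One small notational slip to clean up: you first write ``treating $\beta =\widehat{\tau }$'' but later use $\sigma =-\widehat{\tau }^{2}\partial _{\widehat{\tau }}S$, which is the chain rule for $\beta =\widehat{\tau }^{-1}$; the latter is the correct identification (as in Perelman and in the paper's footnote), so just fix the earlier sentence.
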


A fractional, or integer, differential geometry defined by corresponding
fundamental geometric objects and a fixed, in general, non--integer
differential system is thermodynamically more convenient in dependence of
the values of the above mentioned characteristics of Ricci flow evolution.

\begin{conclusion}
Finally, we draw the conclusions:

\begin{itemize}
\item There is a version of fractional differential and integral calculus
based on the left Caputo derivative when the resulting models of fractional
differential geometry are with a N--connection adapted calculus similarly to noholonomic manifolds and  Finsler--Lagrange geometry.

\item A Ricci flow theory of fractional geometries can be considered as a
nonholonomic evolution model transforming standard integer metrics and
connections (for instance, in Riemann geometry) into generalized ones on
nonsymmetric/noncommutative/fractional ... spaces.

\item A very important property of fractional calculus theories and related
geometric and physical models is that we can work with more "singular"
 functions and field interaction/evolution models in physics and
applied mathematics.
\end{itemize}
\end{conclusion}

\setcounter{equation}{0} \renewcommand{\theequation}
{A.\arabic{equation}} \setcounter{subsection}{0}
\renewcommand{\thesubsection}
{A.\arabic{subsection}}

\appendix

\section{Fractional Integro--Differential Calculus on $\mathbb{R}^{n}$}

\label{asubs1}

We summarize the formalism for a vector fractional differential and integral
calculus elaborated on ''flat'' spaces \cite{taras1}. The constructions
involve a fundamental theorem of calculus and fractional integral Green's,
Stokes' and Gauss's theorems, which are important for definition, in this
work, of fractional Perelman's functionals.

\subsection{Riemann--Liouville and Caputo fractional derivatives}

It is possible to elaborate different types of models of fractional geometry
using different types of fractional derivatives. We follow an approach when
the geometric constructions are most closed to ''integer'' calculus.

\subsubsection{Left and right fractional RL derivatives}

Let us consider that $f(x)$ is a derivable function $f:[\ _{1}x,\
_{2}x]\rightarrow \mathbb{R},$ for $\mathbb{R\ni }\alpha >0,$ and denote the
derivative on $x$ as $\partial _{x}=\partial /\partial x.$

The left Riemann--Liouville (RL) derivative is
\begin{equation*}
\ \ _{\ _{1}x}\overset{\alpha }{\partial }_{x}f(x):=\frac{1}{\Gamma
(s-\alpha )}\left( \frac{\partial }{\partial x}\right) ^{s}\int\limits_{\ \
_{1}x}^{x}(x-\ x^{\prime })^{s-\alpha -1}f(x^{\prime })dx^{\prime },
\end{equation*}%
where $\Gamma $ is the Euler's gamma function. The left fractional Liouville
derivative of order $\alpha ,$ where $\ s-1<\alpha <s,$ with respect to
coordinate $x$ is defined
 $\ \overset{\alpha }{\partial }_{x}f(x):=\lim_{_{1}x\rightarrow -\infty }\ \
_{\ _{1}x}\overset{\alpha }{\partial }_{x}f(x).$ 

The right RL derivative is
 $\ _{\ x}\overset{\alpha }{\partial }_{\ _{2}x}f(x):=\frac{1}{\Gamma
(s-\alpha )}\left( -\frac{\partial }{\partial x}\right)
^{s}\int\limits_{x}^{\ _{2}x}(x^{\prime }-x)^{s-\alpha -1}$ $f(x^{\prime
})dx^{\prime }.$ %
The corresponding right fractional Liouville derivative is
 $\ _{\ x}\overset{\alpha }{\partial }f(x^{k}):=\lim_{_{2}x\rightarrow \infty
}\ \ _{\ x}\overset{\alpha }{\partial }_{\ _{2}x}f(x).$ In this work, we shall not use right derivatives.

Only the fractional Liouville derivatives define operators satisfying the
semigroup properties on function spaces. The fractional RL derivative of a
constant $C$ is not zero but, for instance, $\ _{\ _{1}x}\overset{\alpha }{%
\partial }_{x}C=C\frac{(x-\ _{1}x)^{-\alpha }}{\Gamma (1-\alpha )}.$
Complete fractional integro--differential constructions based only on such
derivatives seem to be very cumbersome and has a number of properties which
are very different from similar ones for integer calculus.

\subsubsection{Fractional Caputo derivatives}

The respective left and right fractional Caputo derivatives are 
\begin{eqnarray}
 \ _{\ _{1}x}\overset{\alpha }{\underline{\partial }}_{x}f(x)&:=&\frac{1}{%
\Gamma (s-\alpha )}\int\limits_{\ _{1}x}^{x}(x-\ x^{\prime })^{s-\alpha
-1}\left( \frac{\partial }{\partial x^{\prime }}\right) ^{s}f(x^{\prime
})dx^{\prime },  \label{lfcd} \\
 \mbox{and }
\ _{\ x}\overset{\alpha }{\underline{\partial }}_{\ _{2}x}f(x)&:=&\frac{1}{%
\Gamma (s-\alpha )} \int\limits_{x}^{\ _{2}x}(x^{\prime }-x)^{s-\alpha -1}
\left( -\frac{\partial }{\partial x^{\prime }}\right) ^{s}f(x^{\prime
})dx^{\prime }\ , \notag
\end{eqnarray}%
where we underline the partial derivative symbol, $\underline{\partial },$
in order to distinguish the Caputo operators from the RL ones with usual $%
\partial .$ A very important property is that for a constant $C,$ for
instance, $_{\ _{1}x}\overset{\alpha }{\underline{\partial }}_{x}C=0.$ In
our approach, we shall give priority to the fractional left Caputo
derivative resulting in constructions which are very similar
to those with integer calculus.

\subsection{Vector operations and non--integer differential forms}

\subsubsection{Fractional \ integral}

To formulate fractional integral (Gauss's, Stokes', Green's etc) theorems
with a formal noninteger order $\alpha $ integral $_{\ _{1}x}\overset{\alpha
}{I}_{\ _{2}x},$ we need a generalization of the Newton--Leibniz formula $%
_{\ _{1}x}\overset{\alpha }{I}_{\ _{2}x} \left(\ _{\ _{1}x}\overset{\alpha}{%
\partial }_{x} f(x)\right) =f(\ _{2}x)-f(\ _{1}x),$ for a fractional
derivative $\ _{\ _{1}x}\overset{\alpha}{\partial }_{x}.$ Such "mutually
inverse'' operations do not exist for an arbitrary taken type of fractional
derivative.

Let us denote by $L_{z}(\ _{1}x,\ _{2}x)$ the set of those Lebesgue
measurable functions $f$ on $[\ _{1}x,\ _{2}x]$ for which $||f||_{z}=(
\int\limits_{_{1}x}^{_{2}x}|f(x)|^{z}dx) ^{1/z}<\infty .$ We write $C^{z}[\
_{1}x,\ _{2}x]$ is a space of functions, which are $z$ times continuously
differentiable on this interval.

Using the fundamental theorem of fractional calculus \cite{taras1}, we have
that for any real--valued function $f(x)$ defined on a closed interval $[\
_{1}x,\ _{2}x],$ there is a function $F(x)=_{\ _{1}x}\overset{\alpha }{I}%
_{x}\ f(x)$ defined by the fractional Riemann--Liouville integral $\ _{\
_{1}x}\overset{\alpha }{I}_{x}f(x):=\frac{1}{\Gamma (\alpha )}%
\int\limits_{_{1}x}^{x}(x-x^{\prime })^{\alpha -1}f(x^{\prime })dx^{\prime
}, $ when the function $f(x)=\ _{\ _{1}x}\overset{\alpha }{\underline{%
\partial }}_{x}F(x)$ satisfies the conditions
\begin{eqnarray*}
\ _{\ _{1}x}\overset{\alpha }{\underline{\partial }}_{x}\left( _{\ _{1}x}%
\overset{\alpha }{I}_{x}f(x)\right) &=&f(x),\ \alpha >0, \\
_{\ _{1}x}\overset{\alpha }{I}_{x}\left( \ _{\ _{1}x}\overset{\alpha }{%
\underline{\partial }}_{x}F(x)\right) &=&F(x)-F(\ _{1}x),\ 0<\alpha <1,
\end{eqnarray*}%
for all $x\in \lbrack \ _{1}x,\ _{2}x].$ So, the right fractional RL
integral is inverse to the right fractional Caputo derivative.

There is a corresponding fractional generalization for the Taylor formula%
\begin{equation*}
f(\ _{2}x)-f(\ _{1}x)=_{\ _{1}x}\overset{\alpha }{I}_{x}\left( _{\ _{1}x}%
\overset{\alpha }{\underline{\partial }}_{x}f(x)\right)
+\sum\limits_{s_{1}=0}^{s-1}\frac{1}{s_{1}!}(\ _{2}x-\
_{1}x)^{s_{1}}f^{(s_{1})}(\ _{1}x),\
\end{equation*}%
for $s-1<\alpha \leq s,$ where $f^{(s_{1})}(x)=_{\ _{1}x}\overset{\alpha }{%
\underline{\partial }}_{x}f(x).$

\subsubsection{Definition of fractional vector operations}

Let $X$ be a domain of $\mathbb{R}^{n}$ parametrized as $X=\{\ _{1}x^{i}\leq
x^{i}\leq \ _{2}x^{i}\}$ (or, in brief, $X=\{\ _{1}x\leq x\leq \ _{2}x\},$
which substitute the closed one dimensional interval $[\ _{1}x,\ _{2}x].$
Let $f(x^{i})$ and $F_{k}(x^{i})$ be real--valued functions that have
continuous derivatives up to order $k-1$ on $X,$ such that the $k-1$
derivatives are absolutely continuous, i.e., $f,\mathbf{F=\{}F_{i}\mathbf{\}}%
\in AC^{k}[X],$ see details in \cite{kilbas}.

For a basis $e^{i}$ on $X,$ we can define a fractional generalization of
gradient operator $\partial =e^{i}\partial _{i}=e^{i}\partial /\partial
x^{i} ,$ when$\ _{\ _{1}x}\overset{\alpha }{\underline{\partial }}=e^{i}\
_{\ _{1}x}\overset{\alpha }{\underline{\partial }}_{i},$ for $\ _{\ _{1}x}%
\overset{\alpha }{\underline{\partial }}_{i}:=\ _{\ _{1}x}\overset{\alpha }{%
\underline{\partial }}_{x^{i}}$ being the left fractional Caputo derivatives
on $x^{i}$ defined by (\ref{lfcd}). If $f(x)=f(x^{i})$ is a $(k-1)$ times
continuously differentiable scalar field such that $\partial _{i}^{k-1}f$ is
absolutely continuous, we can define the fractional gradient of $\ f,$%
 $\overset{\alpha }{grad}\ f:=\ _{\ _{1}x}\overset{\alpha }{\underline{%
\partial }}f=e^{i}\ _{\ _{1}x}\overset{\alpha }{\underline{\partial }}_{i}f.$ 
 Let us consider that $X\subset \mathbb{R}^{n}$ \ has a flat metric $\eta
_{ij}$ and its inverse $\eta ^{ij}.$ Then we can define $F^{i}=\eta
^{ij}F_{j}(x^{k})$ and construct the fractional divergence operator%
 $\overset{\alpha }{div}\mathbf{F:}=_{\ _{1}x}\overset{\alpha }{\underline{%
\partial }}_{i}F^{i}(x^{k}).$ 

We can not use the Leibniz rule in a fractional generalization of the vector
calculus because for two analytic functions $\ ^{1}f$ and $\ ^{2}f$ we have
\begin{eqnarray*}
\overset{\alpha }{grad}(\ ^{1}f\ \ ^{2}f) &\neq & (\overset{\alpha }{grad}\
^{1}f) \ ^{2}f+ (\ \overset{\alpha }{grad}\ ^{2}f\ ) \ ^{1}f, \\
\overset{\alpha }{div}(f\mathbf{F)} &\neq & (\ \overset{\alpha }{grad}\ f\ ,%
\mathbf{F}) +f\ \ \overset{\alpha }{div}\mathbf{F.}
\end{eqnarray*}%
This follows from the property that $\ _{\ _{1}x^{\prime }}\overset{\alpha }{%
\underline{\partial }}_{i^{\prime }} (\ ^{1}f\ (x^{j^{\prime }}) \
^{2}f(x^{k^{\prime }})) \neq $\newline
$( _{\ _{1}x^{\prime }}\overset{\alpha }{\underline{\partial }}_{i^{\prime
}}\ ^{1}f(x^{j^{\prime }}))\ ^{2}f(x^{k^{\prime }})+(\ _{\ _{1}x^{\prime }}%
\overset{\alpha }{\underline{\partial }}_{i^{\prime }}\ ^{2}f(x^{k^{\prime
}}))\ ^{1}f(x^{j^{\prime }}).$

A fractional volume integral is a triple fractional integral within a region
$X\subset \mathbb{R}^{3},$ for instance, of a scalar field 
$f(x^{k}),$ $
\overset{\alpha }{I}(f)=\overset{\alpha }{I}[x^{k}]f(x^{k})=\overset{\alpha }%
{I}[x^{1}]\ \overset{\alpha }{I}[x^{2}]\ \overset{\alpha }{I}[x^{3}]f(x^{k}).$ 
 For $\alpha =1$ and $f(x,y,z),$ we have $\overset{\alpha }{I}%
(f)=\iiint\limits_{X}dV f=$ $\iiint\limits_{X}dxdydz\ f.$

The fundamental fractional integral theorems with $\overset{\alpha }{grad}$,
$\overset{\alpha }{div}$ etc are considered in \cite{taras1} for the
fractional Caputo derivatives. We omit such details in this work, but
emphasize that a self--consistent fractional integro--differential calculus on fractional manifolds,
which is very similar to the ''integer'' one, can be developed  following above constructions.

\subsubsection{Fractional differential forms}

There were elaborated different approaches to fractional differential form
caluculus. For instance, a fractional generalization of differential has
been presented by Ben Adda, see review of his results in \cite{benadda}.
Then different fractional generalizations of differential forms and
application to dynamical systems were proposed, see critical remarks and
original results in \cite{taras1,taras2} (fractional differentials/forms
were constructed using different fractional derivatives etc but fractional
integral theorems being considered only recently in \cite{taras1}).

Following \cite{taras1}, an exterior fractional differential is defined
through the fractional Caputo derivatives which is self--consistent with the
definition of fractional integral considered above. We introduce for $\
_{1}x^{i}=0$ the fractional absolute differential $\overset{\alpha }{d}$ \
in the form
\begin{equation*}
\overset{\alpha }{d}:=(dx^{j})^{\alpha }\ \ _{\ 0}\overset{\alpha }{%
\underline{\partial }}_{j}, \mbox{ where } \ \overset{\alpha }{d}%
x^{j}=(dx^{j})^{\alpha }\frac{(x^{j})^{1-\alpha }}{\Gamma (2-\alpha )}.
\end{equation*}%
For the ''integer'' calculus, we use as local coordinate co-bases/--frames
the differentials $dx^{j}=(dx^{j})^{\alpha =1}.$ The ''fractional'' symbol $%
(dx^{j})^{\alpha },$ related to $\overset{\alpha }{d}x^{j},$ is used instead
of $dx^{i}$ for elaborating a co--vector/differential form calculus, see
below the formula (\ref{frdif}). It is considered that for $0<\alpha <1$ we
have $dx=(dx)^{1-\alpha }(dx)^{\alpha }.$

An exterior fractional differential can be defined through the fractional
Caputo derivatives in the form%
\begin{equation*}
\overset{\alpha }{d}=\sum\limits_{j=1}^{n}\Gamma (2-\alpha )(x^{j})^{\alpha
-1}\ \overset{\alpha }{d}x^{j}\ \ _{\ 0}\overset{\alpha }{\underline{%
\partial }}_{j}.
\end{equation*}%
Differentials are dual to partial derivatives, and derivation is inverse to
integration. For a fractional calculus, \ the concept of ''dual'' and
''inverse'' have a more sophisticate relation to ''integration'' and, in
result, there is a more complex relation between forms and vectors.

The fractional integration for differential forms on $L=[\ _{1}x,\ _{2}x]$
is introduced using the operator %\begin{equation*}
$\ _{L}\overset{\alpha }{I}[x]:=\int\limits_{\ _{1}x}^{\ _{2}x}\frac{%
(dx)^{1-\alpha }}{\Gamma (\alpha )(\ _{2}x-x)^{1-\alpha }}$
%\end{equation*}%
when, for $0<\alpha <1,$%
\begin{equation}
\ _{L}\overset{\alpha }{I}[x]\ \ _{\ _{1}x}\overset{\alpha }{d}_{x}f(x)=f(\
_{2}x)-f(\ _{1}x).  \label{aux01}
\end{equation}%
The \textbf{fractional differential} of a function $\ f(x)$ is $_{\ _{1}x}%
\overset{\alpha }{d}_{x}f(x)=[...],$ with square brackets considered are
defined by formula%
\begin{equation*}
\int\limits_{_{1}x}^{_{2}x}\frac{(dx)^{1-\alpha }}{\Gamma (\alpha )(\
_{2}x-x)^{1-\alpha }}\left[ (dx^{\prime })^{\alpha }\ \ _{\ _{1}x}\overset{%
\alpha }{\underline{\partial }}_{x^{\prime \prime }}f(x^{\prime \prime })%
\right] =f(x)-f(\ _{1}x).
\end{equation*}

The exact fractional differential 0--form is a fractional differential of
the function
\begin{equation*}
\ _{\ _{1}x}\overset{\alpha }{d}_{x}f(x):=(dx)^{\alpha }\ \ _{\ _{1}x}%
\overset{\alpha }{\underline{\partial }}_{x^{\prime }}f(x^{\prime })
\end{equation*}%
when the equation (\ref{aux01}) is considered as the fractional
generalization of the integral for a differential 1--form. So, the \textbf{%
fractional exterior derivative} can be written
\begin{equation}
\ \ _{\ _{1}x}\overset{\alpha }{d}_{x}:=(dx^{i})^{\alpha }\ \ _{\ _{1}x}%
\overset{\alpha }{\underline{\partial }}_{i}.  \label{feder}
\end{equation}%
Then, the fractional differential 1--form $\ \overset{\alpha }{\omega }$
with coefficients $\{F_{i}(x^{k})\}$ is
\begin{equation}
\ \overset{\alpha }{\omega }=(dx^{i})^{\alpha }\ F_{i}(x^{k})  \label{fr1f}
\end{equation}

The exterior fractional derivatives of 1--form $\ \overset{\alpha }{\omega }$
gives a fractional 2--form,
\begin{equation*}
\ \ _{\ _{1}x}\overset{\alpha }{d}_{x}(\ \overset{\alpha }{\omega }%
)=(dx^{i})^{\alpha }\wedge (dx^{j})^{\alpha }\ _{\ _{1}x}\overset{\alpha }{%
\underline{\partial }}_{j}\ F_{i}(x^{k}).
\end{equation*}%
This rule can be proven following the relations \cite{kilbas} that for any
type fractional derivative $\ \overset{\alpha }{\partial }_{x\ },$ we have
\begin{equation*}
\ \overset{\alpha }{\partial }_{x\ }(\ ^{1}f\ \
^{2}f)=\sum\limits_{k=0}^{\infty }\left(
\begin{array}{c}
\alpha \\
k%
\end{array}%
\right) \left( \overset{\alpha -k}{\partial }_{x\ }\ ^{1}f\right) \ \
\overset{\alpha =k}{\partial }_{x\ }\left( \ ^{2}f\right) ,
\end{equation*}%
for integer $k,$ where
\begin{equation*}
\left(
\begin{array}{c}
\alpha \\
k%
\end{array}%
\right) =\frac{(-1)^{k-1}\alpha \Gamma (k-\alpha )}{\Gamma (1-\alpha )\Gamma
(k+1)}\mbox{\ and \ }\ \ \ \overset{k}{\partial }_{x\ }\ (dx)^{\alpha
}=0,k\geq 1.
\end{equation*}%
There are used also the properties:\ $\ _{_{1}x^{\prime }}\overset{\alpha }{%
\partial }_{x^{\prime }\ }(x^{\prime }-\ _{1}x^{\prime })^{\beta }=\frac{%
\Gamma (\beta +1)}{\Gamma (\beta +1-\alpha )}(x-\ _{1}x)^{\beta -\alpha },$
where $n-1<\alpha <n$ and $\beta >n,$ and $\ _{_{1}x^{\prime }}\overset{%
\alpha }{\partial }_{x^{\prime }\ }(x^{\prime }-\ _{1}x^{\prime })^{k}=0$
for $k=0,1,2,...,n-1.$ We obtain $\ _{\ _{1}x}\overset{\alpha }{d}_{x}(x-\
_{1}x)^{\alpha }=(dx)^{\alpha }\ _{\ _{1}x}\overset{\alpha }{\underline{%
\partial }}_{i^{\prime }}x^{i^{\prime }}=(dx)^{\alpha }\Gamma (\alpha +1),$
i.e.
\begin{equation}
(dx)^{\alpha }=\frac{1}{\Gamma (\alpha +1)}\ _{\ _{1}x}\overset{\alpha }{d}%
_{x}(x-\ _{1}x)^{\alpha }\   \label{frdif}
\end{equation}%
and write the fractional exterior derivative (\ref{feder}) in the form
\begin{equation*}
\ \ _{\ _{1}x}\overset{\alpha }{d}_{x}:=\frac{1}{\Gamma (\alpha +1)}\ \ _{\
_{1}x}\overset{\alpha }{d}_{x}(x^{i}-\ _{1}x^{i})^{\alpha }\ _{\ _{1}x}%
\overset{\alpha }{\underline{\partial }}_{i}.
\end{equation*}%
Using this formula, the \ fractional differential 1--form (\ref{fr1f}) can
be written alternatively%
\begin{equation*}
\ \overset{\alpha }{\omega }=\frac{1}{\Gamma (\alpha +1)}\ \ _{\ _{1}x}%
\overset{\alpha }{d}_{x}(x^{i}-\ _{1}x^{i})^{\alpha }\ \ F_{i}(x).
\end{equation*}

Having a well defined exterior calculus of fractional differential forms on
flat spaces $\mathbb{R}^{n},$ we can generalize the constructions for a real
manifold \ $M,\dim M=n.$\

\end{document}